\def\amsbb{\use@mathgroup \M@U \symAMSb}
\definecolor{darkblue}{rgb}{0,0,0.4}
\DeclareMathOperator*{\hocolim}{hocolim}
\newtheorem{theorem}{Theorem}[section] 
\newtheorem{lemma}[theorem]{Lemma}     
\newtheorem{corollary}[theorem]{Corollary}
\newtheorem{proposition}[theorem]{Proposition}
\newtheorem{remark}[theorem]{Remark}
\newtheorem{definition}[theorem]{Definition}
\newtheorem{question}[theorem]{Question}
\newtheorem{example}[theorem]{Example}
\newcommand{\hooklongleftarrow}{\longleftarrow\joinrel\rhook}
\newcommand*{\@old@slash}{}\let\@old@slash\slash
\def\slash{\relax\ifmmode\delimiter"502F30E\mathopen{}\else\@old@slash\fi}
\newcommand{\longsquiggly}{\xymatrix{{}\ar@{~>}[r]&{}}}
\newcommand{\Top}{\underline{\mathrm{Top}}}
\newcommand{\LC}{\underline{\mathrm{LOCov}}}
\newcommand{\PC}{\underline{\mathrm{PCov}}}
\newcommand{\CWC}{\underline{\mathrm{CWCov}}}
\newcommand{\Poset}{\underline{\mathrm{Poset}}}
\newcommand{\SC}{\underline{\mathrm{SimpComp}}}
\newcommand{\Set}{\underline{\mathrm{Set}}}
\newcommand{\SSet}{\underline{\mathrm{SSet}}}
\newcommand{\Simp}{\mathrm{Simp}\,}
\newcommand{\St}{\mathrm{St}}
\newcommand{\hC}{\check{\mathcal{C}}}
\newcommand{\op}{\mathrm{op}}
\newcommand{\C}{\mathcal{C}}
\newcommand{\bbR}{\amsbb{R}}
\newcommand{\ignore}[1]{}
\newcommand{\D}{\mathcal{D}}
\newcommand{\F}{\mathcal{F}}
\newcommand{\mcN}{\mathcal{N}}
\newcommand{\M}{\mathcal{M}}
\newcommand{\mcS}{\mathcal{S}}
\newcommand{\Cat}{\underline{\mathrm{Cat}}}
\newcommand{\U}{\mathcal{U}}
\newcommand{\V}{\mathcal{V}}
\newcommand{\W}{\mathcal{W}}
\newcommand{\inter}{\mathrm{int}}
\newcommand{\leqs}{\leqslant}
\newcommand{\geqs}{\geqslant}
\newcommand{\heq}{\simeq}
\newcommand{\maps}{\longrightarrow}
\newcommand{\injects}{\hookrightarrow}
\newcommand{\homeo}{\cong}
\newcommand{\surjects}{\twoheadrightarrow}
\newcommand{\isom}{\cong}
\newcommand{\cross}{\times}
\newcommand{\wt}[1]{\widetilde{#1}} 
\newcommand{\K}{\mathcal{K}}
\newcommand{\Id}{\mathrm{Id}}
\newcommand{\id}{\mathrm{Id}}
\newcommand{\xmaps}{\xrightarrow}
\newcommand{\srm}[1]{\stackrel{#1}{\maps}}
\newcommand{\goesto}{\mapsto}
\newcommand{\nd}{\noindent}
\def\co{\colon\thinspace}
 \newcommand{\s}[1]{\vspace{#1 in}}
\newcommand{\e}{\emph}
\title{Variations on the Nerve Theorem}
\date{\today}
\author[D. Ramras]{Daniel A. Ramras}
\address{Department of Mathematical Sciences, Indiana University Indianapolis, 402 N. Blackford, LD 270, 
Indianapolis, IN 46202, USA}
\email{dramras@iu.edu}
\thanks{The author was partially supported by the Simons Foundation (grant \#579789).}
\thanks{Data availability statement: The author declares that the data supporting the findings of this study are available within the paper.}
\begin{document}

\begin{abstract}
Given a locally finite cover of a simplicial complex by subcomplexes, Bj\"{o}rner's version of the Nerve Theorem provides conditions under which the homotopy groups of the nerve agree with those of the original complex through a range of dimensions. We extend this result to covers of CW complexes by subcomplexes and to open covers of arbitrary topological spaces, without local finiteness restrictions. Moreover, we show that under somewhat weaker hypotheses, the same conclusion holds when one utilizes the \e{multinerve}  nerve introduced by Colin de Verdi\`ere, Ginot, and Goaoc. Our main tool is the \v{C}ech complex associated to a cover, as analyzed in work of Dugger and Isaksen. As applications, we prove a generalized crosscut theorem for posets and some variations on Quillen's Poset Fiber Theorem.
 \end{abstract}

\maketitle{}

\section{Introduction}

Given a cover $\U$ of a space $X$, the Borsuk nerve of $\U$ is the simplicial complex $\mcN (\U)$ consisting of all finite, non-empty  subsets $\F\subseteq \U$ satisfying $\bigcap \F \neq \emptyset$. Assume either that $X$ is a simplicial complex and $\U$ is a collection of subcomplexes whose union is $X$, or that $X$ is paracompact and  $\U$ is an open cover. The classical Nerve Theorems state that if each non-empty finite intersection of sets from $\U$ is contractible, then $X$ is homotopy equivalent to (the geometric realization of) $\mcN (\U)$; proofs of these statements can be found in~\cite{Bjorner-complements, FFH} for simplicial complexes and~\cite{Hatcher} for paracompact spaces. Ideas surrounding the Nerve Theorem play an important role in several areas of pure and applied mathematics. Nerve theorems have been used extensively in the study of homological stability~\cite{LvdK, MvdK}, in topological combinatorics~\cite{Bjorner-top-methods, Bjorner-chess, Bjorner-complements, VGG}, and in topological data analysis (see~\cite{Bauer} and the many references therein).

There are a number of generalizations and variations on these results in the literature. In one direction, Bauer et al.~\cite{Bauer} recently showed that if $\U$ is an open cover of an arbitrary topological space $X$, and   each non-empty finite intersection from $\U$ is weakly contractible, then $X$ is weakly equivalent to $\mcN (\U)$. 
In~\cite[Theorem 12]{VGG}, Colin de Verdi\`ere, Ginot, and Goaoc showed that when $X$ is a simplicial complex, a modified nerve construction $\hat{\mcN} (\U)$ (the \e{multinerve}) captures the homotopy type of $X$ when the finite intersections from $\U$ are disjoint unions of contractible pieces; in the case where $X$ is \e{finite}, Fern\'{a}ndez and Minian~\cite{FM} showed that $\hat{\mcN} (\U)$ is in fact simple homotopy equivalent to $X$. (When each such intersection is path connected, $\hat{\mcN} (\U) = \mcN (\U)$.)
In another direction, Bj\"orner~\cite{Bjorner-nerves-fibers} and Nag\'orko~\cite{Nagorko} showed that in certain contexts, if the intersection of every set of $k$ elements from $\U$ is $(n-k+1)$--connected, then the homotopy groups of $X$ and $\mcN (\U)$ agree through dimension $n$.
Bj\"orner's result applies to locally finite covers of simplicial complexes by subcomplexes, while Nagorko's applies to open covers of locally $n$--connected, separable metric spaces.  

In this article we bring several modern homotopy-theoretical tools to bear on these constructions, leading to generalizations of all of the above results.
The following is a somewhat simplified version of the main result; details are found in Theorem~\ref{nerve-thm}.

\begin{theorem}\label{nerve-thm-intro} Let $\V$ be either an open cover of a locally path connected topological space $X$, or a cover of a CW complex $X$ by subcomplexes. Assume that for some $n\geqs 0$ and each $\F \subseteq \V$ with cardinality $0< |\F|\leqs n$, each path component of $\bigcap \F$ is $(n-|\F|+1)$--connected. Then there are isomorphisms $\pi_k (X) \isom \pi_k |\hat{\mcN} (\V)|$ for $k\leqs n$ and a surjection $\pi_{n+1} (X) \surjects \pi_{n+1} |\hat{\mcN} (\V)|$. \end{theorem}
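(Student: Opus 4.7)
\medskip

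\noindent\textbf{Proof sketch.}
The plan is to interpose the \v{C}ech simplicial space $\hC(\V)_\bullet$ of Dugger--Isaksen, whose space of $k$-simplices is the disjoint union $\coprod V_0\cap\cdots\cap V_k$ indexed by $(k+1)$-tuples in $\V$, and to study the diagram
$$ X \;\xleftarrow{\;p\;}\; |\hC(\V)_\bullet| \;\xrightarrow{\;q\;}\; |\hat{\mcN}(\V)|. $$
Here $p$ is assembled from the inclusions of the intersections into $X$, while $q$ sends a point of $V_0\cap\cdots\cap V_k$ to the simplex of the completed nerve labelled by the path components containing it. The use of the completed nerve, as opposed to the Borsuk nerve, is precisely what allows $q$ to see the nontrivial $\pi_0$ of the intersections at each level.

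The first step is to show that $p$ is a weak equivalence. In the locally open setting this is the main content of the \v{C}ech-to-homotopy-colimit comparison of Dugger--Isaksen; in the setting of a CW complex covered by subcomplexes, one can reduce to the locally open case by thickening each subcomplex to an open neighborhood that deformation retracts onto it and preserves the intersection pattern up to homotopy, or equivalently by subdividing sufficiently to obtain a simplicial cover.

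The central step is to show that $q$ is $(n+1)$-connected. Filter $|\hC(\V)_\bullet|$ and $|\hat{\mcN}(\V)|$ by their skeleta coming from the simplicial direction. On a nondegenerate $k$-simplex indexed by $(V_0,\ldots,V_k)$ the map $q$ restricts to the projection $\Delta^k\times(V_0\cap\cdots\cap V_k)\to\Delta^k\times\pi_0(V_0\cap\cdots\cap V_k)$. By hypothesis each path component of $V_0\cap\cdots\cap V_k$ is $(n-k)$-connected when $|\F|=k+1\leqs n$; for $k\geqs n$ the analogous statement is automatic (connectivity $\geqs -1$ or $0$, which holds trivially). Hence this levelwise projection is $(n-k+1)$-connected. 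A realization/skeletal induction in the spirit of Bousfield--Friedlander then assembles these levelwise bounds into $(n+1)$-connectivity for $|q|$, since at each level $(n-k+1)+k=n+1$. Composing with a homotopy inverse of $p$ yields the claimed isomorphisms $\pi_k(X)\isom \pi_k|\hat{\mcN}(\V)|$ for $k\leqs n$ and surjection on $\pi_{n+1}$.

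The main obstacle is carrying out the skeletal comparison while correctly tracking path components across face maps, so that the $E^2$-term of the comparison spectral sequence is genuinely the homotopy of the completed nerve rather than a coarser object. A secondary concern is that no local finiteness is assumed on $\V$, so the induction on $|\F|$ used in Bj\"orner's original argument is unavailable; organizing all intersections simultaneously into a single simplicial space via the Dugger--Isaksen \v{C}ech construction is precisely what makes the argument uniform in the size of $\V$.
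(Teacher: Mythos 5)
Your overall route coincides with the paper's: interpose the Dugger--Isaksen \v{C}ech simplicial space, prove $p\co |\hC(\V)|\to X$ is a weak equivalence (by \cite[Theorem 2.1]{DI-hyper} in the locally open case, and by thickening subcomplexes to open neighborhoods that deformation retract onto them in the CW case --- exactly what the paper does with Hatcher's neighborhoods $U(K)$), and then collapse each level to $\pi_0$ and feed the levelwise connectivity $(n+1-k)$ into a realization lemma (the paper uses the Ebert--Randal-Williams lemma for semi-simplicial spaces rather than a Bousfield--Friedlander style spectral sequence, plus goodness to pass from thick to thin realizations; your connectivity bookkeeping $(n-k)+1 = n+1-k$ is correct, and continuity of the levelwise projection is where condition (2) of ``locally open'' enters).

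The genuine gap is at the target of $q$. The levelwise $\pi_0$-collapse lands in the realization of the simplicial set $\hC^\delta(\V)$, whose $k$-simplices are \emph{all} tuples $((i_0,\ldots,i_k),C)$ with $C\in\pi_0(\V(i_0)\cap\cdots\cap\V(i_k))$; this is not the same object as $|\hat{\mcN}(\V)|$, the realization of the completed nerve \emph{poset}, and your description of $q$ as landing directly in the completed nerve conflates the two. You flag the issue (``so that the $E^2$-term is genuinely the homotopy of the completed nerve rather than a coarser object'') but do not supply the comparison, and it is not a formality: the paper devotes the last stage of the argument to it, passing through the simplex category. Concretely, $|N_*\Simp(\hC^\delta(\V))|\to|\hC^\delta(\V)|$ is a weak equivalence by \cite[Theorem 18.9.3]{Hirschhorn}, and the functor $\Simp(\hC^\delta(\V))\to\hat{\mcN}(\V)$ sending $((i_0,\ldots,i_k),C)$ to $(\{i_0,\ldots,i_k\},C)$ is shown to be a weak equivalence via Quillen's Theorem A, the fibers being simplex categories of nerves of indiscrete categories and hence contractible. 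Without some such step your skeletal induction only computes the connectivity of $|\hC(\V)|\to|\hC^\delta(\V)|$. A secondary caveat: in the CW case the alternative you offer, ``subdividing sufficiently to obtain a simplicial cover,'' is not available for general CW complexes with arbitrary subcomplex covers; the thickening argument is the one that works, and it must be checked (as in Proposition~\ref{CWnp}) that the neighborhoods are open, distribute over intersections, and deformation retract onto the subcomplexes, so that the thickened cover is locally open with the same completed nerve.
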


We note that if, as in the classical Nerve Theorem, we assume all finite intersections of sets in our open cover are path connected, then Theorem~\ref{nerve-thm-intro} applies to arbitrary spaces, without the locally path connected hypothesis (see Sections~\ref{funct-sec} and~\ref{compl}).

The key observation behind Theorem~\ref{nerve-thm-intro} is that Bj\"orner's connectivity hypotheses match exactly with the hypotheses identified by Ebert and Randal-Williams~\cite[Lemma 2.4]{ERW} for showing that a map of simplicial spaces induces an $(n+1)$--connected map between realizations:

\begin{lemma}[Ebert--Randal-Williams]\label{ERW} Let $f_*\co X_* \to Y_*$ be a map of good simplicial spaces. If, for each $k\geqs 0$, the map $f_k \co X_k \to Y_k$ is $(n-k)$--connected, then $|f|\co |X_*| \to |Y_*|$ is $n$--connected.
\end{lemma}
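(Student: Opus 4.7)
The plan is to induct on the skeletal filtration of the thick realization. Recall that $||X_*||$ carries a natural filtration $||X_*||^{(0)} \subseteq ||X_*||^{(1)} \subseteq \cdots$ with $||X_*||^{(0)} = X_0$ and
$$||X_*||^{(k)} = ||X_*||^{(k-1)} \cup_{\partial \Delta^k \times X_k} (\Delta^k \times X_k),$$
so that $||X_*||^{(k-1)} \hookrightarrow ||X_*||^{(k)}$ is a cofibration with cofiber $S^k \wedge (X_k)_+$, and the analogous filtration of $||Y_*||$ is compatible with $f_*$. I would prove by induction on $k$ that $||f||^{(k)} \co ||X_*||^{(k)} \to ||Y_*||^{(k)}$ is $n$-connected. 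The base case $k=0$ is the hypothesis that $f_0$ is $n$-connected.

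In the inductive step one has a commutative ladder of cofibration sequences whose two cofibers are $S^k \wedge (X_k)_+$ and $S^k \wedge (Y_k)_+$, connected by $\Sigma^k(f_k)_+$. The left vertical $||f||^{(k-1)}$ is $n$-connected by induction, and the right vertical is also $n$-connected, since smashing an $(n-k)$-connected based map with the $(k-1)$-connected pointed space $S^k$ raises connectivity by $k$. I would then conclude that the middle vertical $||f||^{(k)}$ is $n$-connected by either (a) applying the Blakers--Massey excision theorem to identify the relative homotopy groups $\pi_i(||X_*||^{(k)}, ||X_*||^{(k-1)})$ with $\pi_i(S^k \wedge (X_k)_+)$ in a sufficient range, followed by a five-lemma argument on the long exact sequences of the pairs, or (b) passing to homology via the first-quadrant spectral sequence
$$E^1_{p,q} = H_q(Y_p, X_p) \Rightarrow H_{p+q}(||Y_*||, ||X_*||)$$
for the thick realization: level-wise relative Hurewicz gives $E^1_{p,q} = 0$ for $p+q \leqs n$, hence $H_i(||Y_*||, ||X_*||) = 0$ for $i \leqs n$, and invoking the reverse direction of relative Hurewicz upgrades this to $\pi_*$, with the $\pi_0$ and $\pi_1$ cases treated separately by a simplicial van Kampen computation.

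Finally, $||X_*||^{(n+1)} \hookrightarrow ||X_*||$ is $(n+1)$-connected since each subsequent attaching stage contributes cells of dimension at least $n+2$, so the $n$-connectedness of $||f||^{(n+1)}$ promotes to $||f||$.

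The main obstacle is the middle vertical step of the induction: cofibration sequences do not in general yield long exact sequences of homotopy groups, and the Blakers--Massey theorem (or, equivalently, the homology--Hurewicz detour) is precisely the tool needed to bridge this gap. Arranging the bookkeeping so that the Blakers--Massey range is large enough at each inductive step is the delicate point.
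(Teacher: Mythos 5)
Your overall architecture---induction over the skeletal filtration of the thick realization, plus the final observation that passing from the $(n+1)$--skeleton to $||X_*||$ costs nothing through dimension $n$---is the same as in the proof the paper relies on (the paper does not reprove the lemma; it cites \cite[Lemma 2.4]{ERW}). But both of your proposed ways of closing the inductive step have genuine gaps. For (a): Blakers--Massey applied to the pushout $||X_*||^{(k)} = ||X_*||^{(k-1)} \cup_{\partial\Delta^k\times X_k}(\Delta^k\times X_k)$ compares $\pi_i(\Delta^k\times X_k,\partial\Delta^k\times X_k)$ with $\pi_i(||X_*||^{(k)},||X_*||^{(k-1)})$ only for $i<(k-1)+q$, where $q$ is the connectivity of the pair $(||X_*||^{(k-1)},\partial\Delta^k\times X_k)$, and nothing in the hypotheses controls $q$ (it can be $0$, or $-1$ when $X_k$ misses components); the further identification with $\pi_i(S^k\wedge(X_k)_+)$ needs, in addition, high connectivity of $||X_*||^{(k-1)}$ itself, which is also not available. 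Indeed the identification you want is already false in the allowed generality: with $k=1$, $X_1$ a point whose two faces land in the two summands of $X_0=S^m\sqcup S^m$ (and $f=\mathrm{id}$, so the hypotheses hold for every $n$), the pair $(||X_*||^{(1)},||X_*||^{(0)})$ has nontrivial relative homotopy in degree $2m-1$ coming from the Whitehead product in $S^m\vee S^m$, while $S^1\wedge(X_1)_+\heq S^1$ has none. For (b): the spectral sequence does give $H_i(||Y_*||,||X_*||)=0$ for $i\leqs n$, but the converse relative Hurewicz step requires control of $\pi_1$ \emph{and of its action}: an isomorphism on $\pi_1$ plus ordinary homology isomorphisms in a range does not imply $n$--connectedness (acyclic maps are the standard counterexample). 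The levelwise hypotheses do give acyclicity with arbitrary local coefficients pulled back from $||Y_*||$, so (b) is repairable, but only by running the comparison with local coefficients and separately proving $\pi_1||f||$ is an isomorphism---neither of which is in your sketch. Note also that your closing step (``later stages contribute cells of dimension at least $n+2$'') already uses that a cobase change of the $(k-1)$--connected cofibration $\partial\Delta^k\times X_k\injects\Delta^k\times X_k$ is still $(k-1)$--connected, i.e.\ exactly the kind of gluing statement your argument lacks.

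The proof the paper points to (see its footnote to this lemma) closes the induction with a gluing lemma for $n$--connected maps, tom Dieck \cite[Theorem 6.7.9]{tomDieck-AT}, rather than with Blakers--Massey or Hurewicz. The numerical heart is the connectivity of a product of pairs: after replacing $f_k$ by a cofibration, the pair $\bigl(\Delta^k\times Y_k,\ \Delta^k\times X_k\cup\partial\Delta^k\times Y_k\bigr)$ is $\bigl((k-1)+(n-k)+1\bigr)=n$--connected, and this connectivity persists under cobase change along a cofibration; running over the skeletal filtration of $(||Y_*||,||X_*||)$ then gives the lemma. Morally, $||Y_*||$ is built from $||X_*||$ by attaching cells of dimension $>n$, since relative cells of dimension $>n-k$ in level $k$ contribute cells of dimension $>n$ after crossing with $\Delta^k$. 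If you want to keep your cofiber-sequence outline, the missing ingredient is precisely such a gluing/cobase-change statement for $n$--connected maps; Blakers--Massey, as you have deployed it, cannot supply it.
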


The condition \e{good} means that the degeneracy maps are closed cofibrations, and guarantees that the natural map from the thick geometric realization to the ordinary geometric realization is a weak equivalence~\cite[Proposition A.1]{Segal-cat-coh}.\footnote{The  proof of Lemma~\ref{ERW} in~\cite{ERW} is based on a glueing lemma for $n$--connected maps~\cite[Theorem 6.7.9]{tomDieck-AT}. The definition of $n$--connected map given there requires only a surjection on $\pi_0$, whereas we require an isomorphism on $\pi_0$ when $n\geqs 1$. However, it is elementary to check that the results hold in full generality under this stronger condition.}

 The proof of Theorem~\ref{nerve-thm-intro} for open covers proceeds by identifying appropriate simplicial models for $X$ and $\hat{\mcN}(\U)$, using facts about the \v{C}ech complex due to Dugger and Isaksen~\cite{DI-hyper}. 
We then deduce the corresponding result for CW covers using Hatcher's construction of open neighborhoods for subcomplexes~\cite[Appendix]{Hatcher} -- this technique is exactly the method Hatcher uses to show that basic results like the Seifert--Van Kampen Theorem and the Mayer--Vietoris sequence apply to covers of CW complexes by subcomplexes.

It should be noted that we are not generalizing all aspects of the work on nerves discussed above:
\begin{itemize}

\item Nag\'orko allows more general dimension-theoretical hypotheses on the cover.

\item Bauer et al.~obtain a functoriality statement using the functor carrying a cover to the geometric realization of the nerve as a \e{simplicial complex}, whereas we use the geometric realization of the nerve as a \e{poset}, which has many more simplices. While the latter is homeomorphic to the former via barycentric subdivision, this homeomorphism is not natural for non-injective simplicial maps, and in general morphisms of covers need not induce injective maps between the nerves.
\end{itemize}
 
In Section~\ref{pnsec}, we present another variation on the Nerve Theorem, of a somewhat different nature. Consider a CW complex $X$ and a cover $\V$ of $X$ consisting of subcomplexes. We show that in some circumstances where the nerve of $\V$ \e{does not} model the homotopy type of $X$, one may still obtain a combinatorial model for $X$ from the pattern of intersections of the sets in $\V$. This is done by considering   intersections of all sets containing a given cell of $X$. When $\V$ is locally finite, these intersections form a subposet (though not a subcomplex) of $\mcN(\V)$, and hence we think of this as a \e{partial} nerve construction.

In the final sections, we present some applications to topological combinatorics. 
In Section~\ref{cutsets} we generalize results on crosscuts and cutsets in posets due to Bj\"orner~\cite{Bjorner-complements, Bjorner-nerves-fibers} and Ottina~\cite{Ottina}.  
In Section~\ref{coposec}, we apply our results to give several novel variations on Quillen's Poset Fiber Theorem~\cite{Quillen-subgroup-poset}. The Poset Fiber Theorem may be viewed as a method for extracting information about an order-preserving map $f\co P\to Q$ from the cover of $Q$ by cones over (or under) its elements. Using this viewpoint, together with our results on \v{C}ech complexes of CW covers, we prove the following result (see Proposition~\ref{copofiber}).

\begin{proposition} \label{fiber-intro} Let $f\co P\to Q$ be a map of posets with $Q$ finite-dimensional, and assume 
every finite set of minimal elements in $Q$ has a join.
If $f^{-1} (Q_{\geqs m_1 \vee \cdots \vee m_k})$ is $(n-k+1)$--connected for every collection $m_1, \ldots, m_k$ of minimal elements in $Q$, then $f$ is $(n+1)$--connected.
\end{proposition}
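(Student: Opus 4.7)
The plan is to cover $Q$ by principal up-sets at its minimal elements, pull that cover back to $P$ via $f$, and apply Theorem~\ref{nerve-thm-intro} to both covers. Set $\V = \{V_m := Q_{\geqs m} : m \text{ minimal in } Q\}$ and $\U = \{U_m := f^{-1}(V_m)\}$. Since $Q$ is finite-dimensional, every element of $Q$ lies above some minimal element, so $\V$ covers $Q$, and consequently $\U$ covers $P$. Both yield covers by subcomplexes of the order-complex realizations, placing us in the CW case of Theorem~\ref{nerve-thm-intro}.

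First I would analyze the cover $\V$. By the join hypothesis,
\[
V_{m_1} \cap \cdots \cap V_{m_k} \,=\, Q_{\geqs m_1 \vee \cdots \vee m_k},
\]
a cone with apex the join, hence contractible. All intersections being path-connected gives $\hat{\mcN}(\V) = \mcN(\V)$, which is the full (possibly infinite) simplex on the set of minimal elements of $Q$, and hence contractible. Applying Theorem~\ref{nerve-thm-intro} (with $n$ arbitrarily large, by contractibility) gives $|Q| \iso |\hat{\mcN}(\V)|$, so $|Q|$ is weakly contractible.

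Next I would analyze $\U$. The hypothesis reads: $U_{m_1} \cap \cdots \cap U_{m_k} = f^{-1}(Q_{\geqs m_1 \vee \cdots \vee m_k})$ is $(n-k+1)$-connected, hence path-connected for $k \leqs n+1$ and nonempty for $k \leqs n+2$. It follows that the $n$-skeleton of $\hat{\mcN}(\U)$ coincides with that of the simplex $\hat{\mcN}(\V)$, and each $(n+1)$-subset of minimals contributes at least one $(n+1)$-simplex to $\hat{\mcN}(\U)$; any extra $(n+1)$-simplices over the same subset share that same $n$-boundary. So $|\hat{\mcN}(\U)|^{(n+1)}$ is built from the $n$-connected complex $|\hat{\mcN}(\V)|^{(n+1)}$ (the $(n+1)$-skeleton of a contractible simplex) by attaching additional $(n+1)$-cells along null-homotopic $n$-spheres, which preserves $n$-connectivity. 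Thus $|\hat{\mcN}(\U)|$ is $n$-connected.

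Theorem~\ref{nerve-thm-intro} applied to $\U$ then gives $\pi_k(|P|) \cong \pi_k(|\hat{\mcN}(\U)|) = 0$ for $k \leqs n$, so $|P|$ is $n$-connected. Combined with weak contractibility of $|Q|$, the map $f \co |P| \to |Q|$ has source and target with vanishing $\pi_k$ for $k \leqs n$ and vanishing $\pi_{n+1}(|Q|)$, forcing $f$ to be $(n+1)$-connected. The hard part will be the nerve-analysis step for $\U$: carefully unpacking the Fern\'{a}ndez--Minian construction $\hat{\mcN}(\U)$ in the presence of possibly disconnected intersections of $n+2$ or more members, and verifying that the extra parallel $(n+1)$-cells do not introduce nontrivial elements of $\pi_n$.
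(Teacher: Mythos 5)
Your proposal is correct, but it takes a genuinely different route from the paper. The paper proves this as a special case of a relative detection principle (Proposition~\ref{CW-detection}): it pulls the cover $\{Q_{\geqs m}\}$ back along $f$, compares the two \v{C}ech complexes levelwise -- the map $f^{-1}(Q_{\geqs m_1\vee\cdots\vee m_k})\to Q_{\geqs m_1\vee\cdots\vee m_k}$ is $(n-k+2)$--connected since the target is a cone -- and invokes Lemma~\ref{ERW} together with the weak equivalences $|\hC(\cdot)|\to X$; no analysis of the nerve itself is needed, and crucially the argument never requires $Q$ to be highly connected. You instead exploit the stronger hypothesis of this intro version (every finite set of minimal elements has a join), which forces \emph{all} finite intersections $Q_{\geqs m_1}\cap\cdots\cap Q_{\geqs m_k}$ to be nonempty cones, so the classical nerve theorem makes $|Q|$ weakly contractible; you then show $|P|$ is $n$--connected by applying the completed nerve theorem to $f^{-1}\V$ and analyzing the regular CW structure of $|\hat{\mcN}(f^{-1}\V)|$ (its $n$--skeleton is that of a simplex, extra $(n+1)$--cells attached to an $n$--connected complex cannot create $\pi_{\leqs n}$, and higher cells are irrelevant -- this is exactly the cell structure supplied by Proposition~\ref{CW}, so your deferred "hard part" is fine, modulo the harmless off-by-one in "$(n+1)$--subset," which should be subsets of cardinality $n+2$). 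The trade-off: your absolute argument is more self-contained on the nerve side, but it collapses the problem to "source is $n$--connected, target is weakly contractible," and therefore does not extend to the paper's more general Proposition~\ref{copofiber}, where joins are only assumed for bounded-above families of minimal elements, intersections may be empty, and $Q$ need not be contractible; the paper's levelwise \v{C}ech comparison is what buys that generality (and likewise the chain version, Proposition~\ref{Achain}). A small further point in your favor: because you never map $\hat{\mcN}(f^{-1}\V)$ to $\hat{\mcN}(\V)$, the indexing subtlety of Remark~\ref{indexed} (distinct minimal elements can have equal preimages) does no harm to your argument, whereas the paper's proof must keep the pulled-back cover indexed by the same set.
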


Bj\"orner's version of the Nerve Theorem from~\cite{Bjorner-nerves-fibers} proceeds by analyzing a particularly simple map from a simplicial complex to the nerve of a locally finite simplicial cover. In Section~\ref{simpsec}, we use Proposition~\ref{fiber-intro} to give a new proof of this result, and to extend it to covers that are not necessarily locally finite.  

Another way to cover (the order complex of) a poset is by considering the set of \e{maximal chains}. This leads to another variation on Quillen's theorem (see Proposition~\ref{Achain}).  

\begin{proposition}
Let $f\co P\to Q$ be a poset map, with $Q$ finite dimensional. If $f^{-1} (m_1 \cap \cdots \cap m_k)$ is $(n-k+1)$--connected for every set of maximal chains $m_1, \ldots, m_k \subseteq Q$, then $f$ is $(n+1)$--connected.
\end{proposition}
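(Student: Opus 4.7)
The plan mirrors the proof of Proposition~\ref{copofiber}, with the cover of $Q$ by cones $Q_{\geqs m}$ replaced by a cover by maximal chains. For each maximal chain $m$ in $Q$, the order complex $|m|$ is a simplex, hence a contractible subcomplex of $|Q|$; since $Q$ is finite-dimensional, every chain in $Q$ extends to a maximal chain, so the family $\V_Q$ of all such $|m|$ is a cover of $|Q|$ by subcomplexes, and pulling back along $f$, the family $\V_P$ of all $|f^{-1}(m)|$ is a cover of $|P|$ by subcomplexes. The intersection of any collection of maximal chains $m_1, \ldots, m_k$ is again a chain (being a subset of any one of them), so $|m_1| \cap \cdots \cap |m_k| = |m_1 \cap \cdots \cap m_k|$ is empty or a simplex, while on the $P$--side the analogous intersection equals $|f^{-1}(m_1 \cap \cdots \cap m_k)|$, which is $(n-k+1)$--connected by hypothesis.

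I would then consider the induced map of \v{C}ech complexes $\hC_*(\V_P) \to \hC_*(\V_Q)$. At each level this is a coproduct of maps whose targets are empty or contractible, so the connectivity of each nonempty summand equals one more than the connectivity of the source. Combined with the hypothesis, this yields that the level-$k$ map is $((n+1)-k)$--connected, and Lemma~\ref{ERW} then gives that the induced map of thick realizations is $(n+1)$--connected. Identifying these realizations with $|P|$ and $|Q|$ via the \v{C}ech-complex arguments used in the proof of Theorem~\ref{nerve-thm-intro}, the map corresponds up to weak equivalence to $|f|$, so $f$ is $(n+1)$--connected.

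The main technical point is to verify the level-by-level connectivity bound exactly through the critical \v{C}ech degree at which Lemma~\ref{ERW} requires only $\pi_0$--surjectivity: this reduces to the $(-1)$--connectivity consequence of the hypothesis, which guarantees $f^{-1}(\bigcap m_i) \neq \emptyset$ whenever $\bigcap m_i \neq \emptyset$ at the appropriate intersection size. The remaining verifications---matching the order complex of an intersection of chains with the intersection of their order complexes, and tracking the indexing conventions between the \v{C}ech complex and the hypothesis on $k$-fold intersections---are routine.
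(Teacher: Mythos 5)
Your proposal is correct and follows essentially the same route as the paper: the paper packages the \v{C}ech-complex comparison via Lemma~\ref{ERW} that you describe into the general detection result Proposition~\ref{CW-detection} (proved exactly by the level-wise coproduct argument and the identification $|\hC(\V)|\simeq |Q|$ from Proposition~\ref{CWcech-prop}), and then applies it to the cover of $|Q|$ by realizations of maximal chains, with the same connectivity bookkeeping, including the critical degree where only $\pi_0$--surjectivity is needed. The one small point to make explicit is that the pulled-back cover should be indexed by the same set of maximal chains of $Q$ (cf.\ Remark~\ref{indexed}), which your level-wise coproduct comparison implicitly assumes.
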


The paper is organized as follows. In Section~\ref{cn-sec} we review the multinerve, discuss its basic properties, and formulate the main result, Theorem~\ref{nerve-thm}. Section~\ref{cechsec} introduces the \v{C}ech complex, which is a central tool in the proof. We prove Theorem~\ref{nerve-thm} for open covers in Section~\ref{locsec} and for CW covers in Section~\ref{CW-sec}, after establishing some tools for working with CW covers in Section~\ref{CW-nbhds}. 
Partial nerves are introduced and studied in Section~\ref{pnsec}. 
Crosscuts are discussed in Section~\ref{cutsets}, and fiber theorems in Section~\ref{coposec}. Simplicial covers are considered in Section~\ref{simpsec}.
 
\s{.2}

\nd{\bf Acknowledgments:} The author thanks Bernardo Villarreal and Omar Antol\'in-Camarena for helpful discussions, and the anonymous referee for various helpful comments and corrections. 

\section{Nerve constructions}\label{cn-sec}

We begin by setting out some basic conventions that will simplify the notation involved in working with the Borsuk nerve and the multinerve introduced in~\cite{VGG}. 

 It will be convenient to use unary notation for unions and intersections; for instance, given a set $S$ (whose elements are again sets), we define 
 \[\bigcup S:= \{x \co \exists A\in S, \, x\in A\}\]
 and similarly for $\bigcap S$.  
We use the same symbol to denote a function $f\co S\to T$ and the induced function $2^S\to 2^T$ on power sets; the latter map is ``simplicial'' in the sense that it does not increase cardinality and distributes over unions.

A simplicial complex is a set $K$ such that each element (simplex) $\sigma \in K$ is a non-empty finite set, and $\emptyset \neq \sigma \subseteq \tau \in K$ implies $\sigma \in K$. 
Morphisms in the category $\SC$ of simplicial complexes (\e{simplicial maps}) are functions $f\co K\to L$ that preserve unions and do not increase cardinality: $f(\sigma\cup \tau) = f(\sigma) \cup f(\tau)$ and $|f(\sigma)| \leqs |\sigma|$ (equivalently, simplicial maps are those functions sending singleton sets to singleton sets and satisfying $f(\sigma) = \bigcup_{v\in\sigma} f(\{v\})$). Note that if a simplicial map $f\co K\to L$ is bijective, then it is an isomorphism of simplicial complexes (that is, $f^{-1}$ is automatically simplicial).

Morphisms in the category $\Poset$ of posets are order-preserving functions. We have functors
\[ \SC \maps \Poset \srm{\Delta} \SC.\]
The first functor attaches to a simplicial complex its tautological ordering via set-theoretic inclusion (its \e{face poset}).
We will use the same symbol to denote both a simplicial complex and its associated poset, and we leave this functor unnamed.
The functor $\Delta$ sends a poset $P$ to its order complex, which is the simplicial complex $\Delta P$ consisting of finite, non-empty chains (totally ordered subsets) of $P$; if $f\co P\to Q$ is order-preserving, then $f\co 2^P\to 2^Q$ send chains to chains and, as mentioned above, is automatically simplicial, and $\Delta f \co \Delta P \to \Delta Q$ is just the restriction of $f$ to chains.

There is a functor from $\Poset$ to the category $\Cat$ of small categories, given by viewing a partially ordered set $P$ as a category in which there is a unique morphism from $p$ to $q$ when $p\leqs q$ and no morphisms from $p$ to $q$ otherwise. Again we leave this functor unnamed.

We need to specify functorial a model for geometric realization of posets. 
This functor will pass through the category $\SSet$ of simplicial sets, which we view as functors $\underline{\Delta}^\op \to \Set$. Here $\underline{\Delta} \subseteq \Poset$ is the full subcategory on the objects $[n] = \{0, \ldots, n\}$, $n=0, 1, \ldots$. Geometric realization defines a functor $\SSet\to \Top$, $X_*\goesto |X_*|$; see~\cite{ERW} for a discussion of geometric realization and its generalization to (semi-)simplicial spaces, which will be needed later in this article.
We have a functor  $N_* \co \Poset\to \SSet$ (also called the nerve, unfortunately) sending $P$ to the simplicial set whose $n$--simplices are sequences $p_0 \leqs \cdots \leqs p_n$ in $P^{n+1}$, with face and degeneracy maps given by deletion and repetition, respectively. (This is simply the nerve of the category associated to $P$.) We will write $|P|$ for the geometric realization of the simplicial set $N_* P$; so $|P|$ has a canonical CW structure with an $n$--cell for each non-degenerate simplex $p_0 < \cdots < p_n$. Sending a sequence $p_0 < \cdots < p_n$ to the set $\{p_0, \ldots, p_n\}$ gives a bijection between the non-degenerate simplices in $N_* P$ and the elements of $\Delta P$, and this induces a homeomorphism from the geometric realization of the simplicial complex $\Delta P$ to $|P|$. 
Throughout the paper, all topological concepts applied to a poset $P$ really refer to topological properties of $|P|$.

\subsection{The multinerve}

When considering a cover $\V$ of a space $X$ in isolation, it is natural to view $\V$ as a subset of the power set $2^X$.  However, we will see in Section~\ref{coposec} that allowing indexed covers, in which the same subset of $X$ can appear multiple times, affords better functoriality. For this reason, we will view covers as functions from an index set to $2^X$.

\begin{definition} Let  $X$ be a set and let $\V\co I \to 2^X$ be a function. 
Given a subset $J \subseteq I$, we will use the shorthand notation $\bigcap J$ to denote the intersection $\bigcap_{j\in J} \V(j)$.  

The $($Borsuk$)$ nerve of $\V$ is the simplicial complex 
\[ \mcN (\V) := \{ \F \subseteq I \co 0 < |\F| < \infty \textrm{ and } \bigcap \F \neq \emptyset\}.\]
\end{definition}

This level of generality is convenient, since we will consider the case where $\V$ is a collection of subsets of some topological space, and the case where $\V$ is a collection of subcomplexes of an (abstract) simplicial complex.

\begin{definition}\label{complete-nerve}  Consider a topological space $X$ and a function $\V\co I \to 2^X$. We refer to $\V$ as a \e{partial cover} of $X$.
The multinerve of $\V$ is the poset $ \hat{\mcN} (\V)$ whose elements are pairs $(\F, C)$ with $\F$ a  finite, non-empty  subset of $I$ and $C$ a path component of $\bigcap \F$, with order relation given by 
\[(\F, C) \leqs (\F', C') \iff \F \subseteq \F' \textrm{ and } C' \subseteq C.\]
\end{definition}

Note that when $\F \subseteq \F'$, the requirement $C' \subseteq C$ just means that $C$ is the path component of $\bigcap \F$ containing the points in $C'\subseteq \bigcap \F' \subseteq \bigcap \F$.

As noted in the Introduction, the multinerve first appeared in~\cite{VGG}. There it was shown~\cite[Theorem 12]{VGG} that for a cover $\V$ of simplicial complex $X$ by subcomplexes, the multinerve $\hat{\mcN} (\V)$ is homotopy equivalent to $X$ so long for each $\F\in \mcN (\V)$, each path component of $\cap \F$ is contractible. 
Fern\'{a}ndez and Minian~\cite[Corollary 3.10]{FM} showed that if, in addition, $X$ is \e{finite}, then $\hat{\mcN} (\V)$ is simple homotopy equivalent to $X$. (Another viewpoint on this result is explained in Remark~\ref{simple}).

There are a number of ways to view $ \hat{\mcN} (\V)$. The canonical CW structure on the geometric realization $|\hat{\mcN} (\V)|$ has a cell for each chain in the poset $\hat{\mcN} (\V)$, but Fern\'{a}ndez and Minian~\cite[Section 3]{FM} observed that $|\hat{\mcN} (\V)|$ has a (regular) CW structure whose cells are in bijection with $\hat{\mcN} (\V)$ itself (with $(\F, C)$ corresponding to a cell of dimension $|\F|+1$). We now give another formulation of this fact.

\begin{definition}\label{simp-nerve} Let $\V \co I \to 2^X$ be a collection of subspaces of the space $X$.
Choose a total ordering $\leqs$ on $I$. We define $\hat{\mcN}_* (\V)$ to be the   simplicial set with $k$--simplices 
\[\hat{\mcN}_k (\V) = \{ ((i_0, \ldots, i_k), C)\co i_0 \leqs i_1 \leqs \cdots \leqs i_k \in I \textrm{ and } C\in \pi_0 (\V(i_0) \cap \cdots \cap \V (i_k))\}\]
and with the simplicial structure map associated to $\phi\co [m]\to [n]$ given by 
\[(\{i_0, \ldots, i_n\}, C) \goesto (\{i_{\phi(0)}, \ldots, i_{\phi(m)}\}, [C]),\] 
where $[C]$ is the path component of $C$ in $\bigcap_j \V (i_{\phi(j)})$. 
\end{definition}

This simplicial set is \e{almost} a simplicial complex, except that there may be multiple simplices sharing the same boundary; for instance, if $i, j\in I$ and $\V(i)$, $\V(j)$ are path connected but $\V(i)\cap \V(j)$ is not, then $i$ and $j$ correspond to vertices in $\hat{\mcN}_0 (\V)$, and  each component  $C\in \pi_0 (\V(i)\cap \V(j))$ corresponds to a (distinct) edge in $\hat{\mcN}_1 (\V)$ connecting $i$ and $j$.

\begin{proposition}\label{CW}  Let $\V \co I \to 2^X$ be a collection of subspaces of the space $X$, and choose a total ordering on $I$. Then the non-degenerate $k$--simplices in $\hat{\mcN}_* (\V)$ are those of the form $((i_0, \ldots, i_k), C)$ with the $i_j$ distinct, and there is a homeomorphism $|\hat{\mcN} (\V)| \homeo |\hat{\mcN}_* (\V)|$. Moreover, for each non-degenerate $k$--simplex $\eta$ in $\hat{\mcN}_* (\V)$, the smallest sub-simplicial set of $\hat{\mcN}_* (\V)$ containing $\eta$ is isomorphic to the standard $k$--simplex.
\end{proposition}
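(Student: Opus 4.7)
The plan is to handle the three claims in order, using the first two to prove the third. For the characterization of non-degenerate simplices, I would unwind the definition of the degeneracy maps in Definition~\ref{simp-nerve}: the map $s_j$ sends $((i_0, \ldots, i_{k-1}), C)$ to $((i_0, \ldots, i_{j-1}, i_j, i_j, i_{j+1}, \ldots, i_{k-1}), C)$, so a simplex lies in the image of some $s_j$ exactly when two consecutive indices agree. Since the entries of every simplex of $\hat{\mcN}_*(\V)$ are weakly ordered, this is equivalent to the $i_j$ not being pairwise distinct.

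For the third claim, I would invoke the Yoneda-style description of $\Delta^k$ as the simplicial set freely generated by a single non-degenerate $k$-simplex. Given a non-degenerate $\eta = ((i_0, \ldots, i_k), C)$ with $i_0 < \cdots < i_k$, there is a unique simplicial map $\rho \co \Delta^k \to \hat{\mcN}_*(\V)$ with $\rho(\id_{[k]}) = \eta$, and its image is exactly the sub-simplicial set generated by $\eta$. An $m$-simplex of $\Delta^k$ is an order-preserving map $\phi \co [m] \to [k]$, which $\rho$ carries to $((i_{\phi(0)}, \ldots, i_{\phi(m)}), [C])$. Because the $i_j$ are pairwise distinct, the ordered tuple $(i_{\phi(0)}, \ldots, i_{\phi(m)})$ determines $\phi$ uniquely, so distinct $m$-simplices of $\Delta^k$ map to distinct simplices of $\hat{\mcN}_*(\V)$; hence $\rho$ is a monomorphism, and an isomorphism onto its image.

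For Part~2, the plan is to show that $|\hat{\mcN}_*(\V)|$ is a regular CW complex whose face poset is canonically identified with $\hat{\mcN}(\V)$, and then invoke the classical theorem that a regular CW complex is homeomorphic to the geometric realization of its face poset. By Part~1, the cells of $|\hat{\mcN}_*(\V)|$ are in bijection with the pairs $(\F, C) \in \hat{\mcN}(\V)$, with the cell of $(\F, C)$ having dimension $|\F|-1$; by Part~3, each closed cell is a standard simplex, which establishes regularity. The inclusion of closed cells $\bar{e}_{(\F', C')} \subseteq \bar{e}_{(\F, C)}$ should correspond exactly to the poset order $(\F', C') \leqs (\F, C)$, since both conditions amount to $\F' \subseteq \F$ together with $C'$ being the path component of $\bigcap \F'$ containing $C$. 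Combining these observations identifies $\hat{\mcN}(\V)$ with the face poset of $|\hat{\mcN}_*(\V)|$ and yields the desired homeomorphism.

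The main bookkeeping obstacle I anticipate is the last identification of orderings: I will need to track, via the simplicial structure map of Definition~\ref{simp-nerve}, how the path-component label of an iterated face of $\eta$ is determined by the path component $C$ of $\eta$ itself, and verify that the resulting assignment agrees with the $\leqs$ relation of Definition~\ref{complete-nerve}. Once this is nailed down, the structural argument above goes through cleanly.
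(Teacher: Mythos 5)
Your proposal is correct and follows essentially the same route as the paper: identify the non-degenerate simplices as the repetition-free tuples, show the sub-simplicial set generated by each one is a standard $k$--simplex (your Yoneda/monomorphism argument is just a more explicit version of the paper's bijection of faces with non-empty subsets of $\{i_0,\ldots,i_k\}$), and then conclude via the regular CW complex/face poset theorem, for which the paper cites the proof of Theorem 1.7 in Lundell--Weingram. The remaining bookkeeping you flag (matching face components with the order relation of Definition~\ref{complete-nerve}) is routine and is handled in the paper in a single sentence, so there is no gap of substance.
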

\begin{proof} Recall that the canonical CW structure on a simplicial set has one $k$--cell for each non-degenerate $k$--simplex~\cite{Milnor-realization}. The face (respectively, degeneracy) maps act on  $((i_0, \ldots, i_k), C)\in \hat{\mcN}_k (\V)$ by deleting (repeating) elements from the list $(i_0, \ldots, i_k)$ (and sending $C\in \pi_0 \left( \bigcap_j V(i_j)\right)$ to its path component in the intersection of the resulting list), so the non-degenerate simplices in $\hat{\mcN}_k (\V)$ are just those without repetitions. It follows that faces of $((i_0, \ldots, i_k), C)$ are in bijection with the non-empty subsets of $\{i_0, \ldots, i_k\}$, and this gives the desired isomorphism with the $k$--simplex. It now follows from the proof of~\cite[Theorem 1.7, p. 80]{Lundell-Weingram} that $|\hat{\mcN}_* (\V)|$ is homeomorphic to the geometric realization of its poset of closed cells, ordered by inclusion, which is precisely $\hat{\mcN} (\V)$.
\end{proof}

Given a cover $\V\co I\to 2^X$ of a space $X$, it is helpful to view the multinerve of $\V$ in terms of the Grothendieck construction.
There is a tautological functor $\tau \co \mcN (\V)^\op \to \Top$ sending $\F$ to $\bigcap \F$ and sending morphisms to inclusion the maps between these intersections. Composing $\tau$ with the path component functor $\Top\to \Set$ gives a functor 
\begin{equation}\label{pi0} \pi_0 \co \mcN (\V)^\op \to \Set.\end{equation}
In general, the Grothendieck construction on a functor $f\co \C\to \Set$   is another category, which we denote $\int_C f$. If the domain category is (the category associated to) a poset $P$, then $\int_P f$ is also a poset, whose elements are pairs $(p, x)$ with $p\in P$ and $x\in f(p)$, and $(p,x) \leqs (q, y)$ if and only if $p\leqs q$ and the function 
\[f(p\leqs q)\co f(p)\to f(q)\]
 carries $x$ to $y$. Applying this construction to the functor (\ref{pi0}) yields the following result.

\begin{lemma}\label{CNG} There is an isomorphism of posets
\[\hat{\mcN} (\V)^\op \isom \int_{\mcN(\V)^\op} \pi_0.\]
\end{lemma}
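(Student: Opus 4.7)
The plan is to show that the identity on pairs $(\F,C)$ realizes the claimed isomorphism, by unpacking both sides and checking that the order relations coincide. The whole argument is book-keeping with opposites.

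First I would unpack the Grothendieck construction on the right. An element of $\int_{\mcN(\V)^{\op}} \pi_0$ is a pair $(\F, C)$ where $\F\in \mcN(\V)$ (a finite non-empty subset of $I$ with $\bigcap \F\neq\emptyset$) and $C\in \pi_0(\bigcap \F)$. Since the pairs $(\F,C)$ making up $\hat{\mcN}(\V)$ are the same data (path components of $\bigcap \F$ exist precisely when $\bigcap \F\neq\emptyset$, in which case $\F\in \mcN(\V)$), the underlying sets of the two posets are literally identical, and the candidate isomorphism is the identity map on pairs.

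Next I would match the order relations. On the left, by definition $(\F,C)\leqs_{\hat{\mcN}(\V)^{\op}} (\F',C')$ iff $(\F',C')\leqs_{\hat{\mcN}(\V)} (\F,C)$, i.e., $\F'\subseteq \F$ and $C\subseteq C'$. On the right, $(\F,C)\leqs_{\int} (\F',C')$ requires a morphism $\F\to \F'$ in $\mcN(\V)^{\op}$ (equivalently $\F'\subseteq \F$ in $\mcN(\V)$) together with the condition that the map $\pi_0(\F\to \F') \co \pi_0(\bigcap \F)\to \pi_0(\bigcap \F')$ sends $C$ to $C'$. The tautological functor $\tau$ sends the morphism $\F\to \F'$ in $\mcN(\V)^{\op}$ to the inclusion $\bigcap \F\hookrightarrow \bigcap \F'$ (which goes the correct way because $\F'\subseteq \F$), and the induced map on $\pi_0$ sends a component $C$ of $\bigcap \F$ to the unique component of $\bigcap \F'$ that contains it. Hence this $\pi_0$-map sends $C$ to $C'$ iff $C\subseteq C'$, which is exactly the second clause of the left-hand relation. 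Thus both relations are equivalent to ``$\F'\subseteq \F$ and $C\subseteq C'$.''

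Since the identity map on pairs is a bijection and is order-preserving in both directions, it is an isomorphism of posets. The main (and really only) thing to be careful about is keeping the variance straight: the double opposite in $\hat{\mcN}(\V)^{\op}$ and $\mcN(\V)^{\op}$ is what lines the two conditions up cleanly, and the comment in the paper immediately after Definition~\ref{complete-nerve} (identifying $C'\subseteq C$ with ``$C$ is the component of $\bigcap \F$ containing $C'$'') is exactly the statement that the $\pi_0$-transition map has the expected effect on components.
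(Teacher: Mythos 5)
Your proof is correct and matches the paper's treatment: the paper offers no separate argument for Lemma~\ref{CNG} beyond noting that it follows by applying the Grothendieck construction to the functor $\pi_0 \co \mcN(\V)^\op \to \Set$, which is exactly the definitional unwinding you carry out. Your careful tracking of the variance (so that both order relations reduce to ``$\F'\subseteq \F$ and $C\subseteq C'$'') is the whole content of the lemma, and you have it right.
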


\begin{remark} Thomason's theorem~\cite{Thomason-thesis} now states that $| \hat{\mcN} (\V)|$ is the homotopy colimit of the functor $\pi_0 \co \mcN (\V)^\op \to \Set\injects \Top$.
\end{remark}

We will now show that $\hat{\mcN} (\V)$ is homotopy equivalent to the (potentially) smaller poset consisting of all path components $C\in \pi_0 \left( \bigcap \F\right)$ ($\F\in \mcN(\V)$). Here, and subsequently, we consider the set of path components $\pi_0 (Z)$ of a space $Z$ to be a collection of subsets of $Z$ (so for $C\in \pi_0 Z$, we have $C\in 2^Z$). 

\begin{proposition}\label{poset-vs-nerve} Let $(X, \V)$ be a partial cover of the space $X$. Define
\[\overline{\V} = \bigcup_{\F\in \mcN(\V)} \pi_0 \left(\bigcap \F\right) \subseteq 2^X,\]
with the ordering inherited from $2^X$.
Then the order-preserving map 
\[
\setlength\arraycolsep{1pt}
q\co \begin{array}[t]{ >{\displaystyle}l >{{}}c<{{}}  >{\displaystyle}l } 
           \hat{\mcN} (\V)^\op  &\maps& \overline{\V} \\ 
          (\F, C) &\longmapsto& C 
         \end{array}
\]
is a homotopy equivalence.
\end{proposition}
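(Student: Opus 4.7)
The plan is to verify the hypotheses of Quillen's Theorem A (in its poset form) for the order-preserving map $q$. First I would check order-preservation: if $(\F, C) \leqs (\F', C')$ in $\hat{\mcN}(\V)^{\op}$, then by definition of the opposite of the order from Definition~\ref{complete-nerve}, $C \subseteq C'$, so $q(\F, C) \leqs q(\F', C')$ in $\overline{\V}$.

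By Quillen's Theorem A, it then suffices to show that for every $C_0 \in \overline{\V}$, the fiber
\[ q^{-1}(\overline{\V}_{\geqs C_0}) = \{(\F, C) \in \hat{\mcN}(\V)^{\op} \co C_0 \subseteq C\} \]
has contractible geometric realization. Since $C$ is a path component of $\bigcap \F$ and $C_0$ is path connected, the condition $C_0 \subseteq C$ is equivalent to $C_0 \subseteq \bigcap \F$, in which case $C$ is forced to be the unique path component of $\bigcap \F$ containing $C_0$. Thus, writing $I_{C_0} := \{i \in I \co C_0 \subseteq \V(i)\}$, the fiber is isomorphic as a poset to the set of nonempty finite subsets of $I_{C_0}$ ordered by reverse inclusion. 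Note that $I_{C_0}$ is nonempty, since any $\G \in \mcN(\V)$ with $C_0 \in \pi_0(\bigcap \G)$ satisfies $\G \subseteq I_{C_0}$.

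To finish, I would pick any $v \in I_{C_0}$ and consider three order-preserving self-maps of the fiber: the identity, the constant map at $\{v\}$, and $F\co \F \mapsto \F \cup \{v\}$. The containments $\F \subseteq \F \cup \{v\}$ and $\{v\} \subseteq \F \cup \{v\}$ translate, in reverse-inclusion order, into inequalities $F(\F) \leqs \F$ and $F(\F) \leqs \{v\}$, i.e., natural transformations $F \Rightarrow \id$ and $F \Rightarrow c_{\{v\}}$. These realize to homotopies showing $|\id| \simeq |F| \simeq |c_{\{v\}}|$, so the fiber is contractible, and the homotopy equivalence claim follows. The only point that requires care is keeping track of the various order reversals when identifying the fiber; beyond this bookkeeping, there is no substantive obstacle.
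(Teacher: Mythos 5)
Your proposal is correct and follows essentially the same route as the paper: apply Quillen's Poset Fiber Theorem and show each fiber $q^{-1}(\overline{\V}_{\geqs C_0})$ is the poset of finite non-empty subsets of $\{i\in I \co C_0 \subseteq \V(i)\}$, hence contractible. The only cosmetic difference is that the paper concludes contractibility by identifying this fiber with the face poset of a full simplicial complex, whereas you exhibit the conical homotopy $\F \mapsto \F\cup\{v\}$ explicitly; both are valid.
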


\begin{proof} This is a simple application of Quillen's Poset Fiber Theorem~\cite{Quillen-subgroup-poset}. The Quillen fiber of $q$ above $C_0 \in \overline{\V}$ is the poset 
\[F := q^{-1} (\overline{\V}_{\geqs C_0}) = \{ (\F, C) \in \hat{\mcN} (\V) \co C_0 \subseteq C \}.\]
Note that $(\F, C) \in F$ if and only if $C_0 \subseteq \V(i)$ for all $i\in \F$ and $C$ is the (unique) path component of $\bigcap \F$ containing $C_0$. So $F$ is isomorphic to the face poset of the full simplicial complex on the set $\{i\in I \co C_0 \subseteq \V(i)\}$, and is, in particular, contractible.
\end{proof}

\subsection{Covered spaces and functoriality of the multinerve}\label{funct-sec}

We need to define appropriate domain categories for the nerve constructions. These are variants on the category of covered spaces in~\cite{Bauer}.
First we introduce the type of covers to which our arguments apply.

\begin{definition}\label{loc} Let $X$ be a topological space. A collection $\V\co I \to 2^X$ is called a \e{locally open cover} of $X$ if
\begin{enumerate}
\item The interiors of the sets in $\V(I)$ form an open cover of $X$, and
\item For each $\F \in\mcN(\V)$, the intersection $\bigcap \F$ is the topological disjoint union $($coproduct$)$ of its path components.
\end{enumerate}
\end{definition}

Note that for open covers, Condition (2) is equivalent to requiring that each path component of the open set $\bigcap \F$ is itself open, which always holds if $X$ is locally path connected. Also, Condition (2) is vacuous if each intersection $\bigcap \F$ is path connected, as is assumed in the statement of the classical Nerve Theorem.

 \begin{definition}\label{GC} 
A morphism 
\[(X, \V\co I\to 2^X) \maps (Y, \W\co J\to 2^Y)\]
of partial covers is a pair $(f, \phi)$, where $f\co X\to Y$  is a map and $\phi\co I \to J$ is a function satisfying $f(\V(i)) \subseteq \W(\phi(i))$ for all $i\in I$. Composition is given by $(f, \phi) \circ (g, \psi) = (f\circ g, \phi \circ \psi)$. 
 The resulting category  of partial covers will be denoted $\PC$.

The categories $\underline{\mathrm{Cov}}$ and $\LC$ are the full subcategories of $\PC$ on the objects $(X, \V)$ for which $\V$ is, respectively, a cover $($that is, $\bigcup \V = X$$)$ or a locally open cover, and $\CWC$ is the full subcategory of $\underline{\mathrm{Cov}}$ on those $(X, \V)$ such that $X$ has a CW structure with each $V\in \V$ a subcomplex.
\end{definition}

It is straightforward to check that $\PC$ is a category. 

If  $\alpha = (f, \phi)\co (X, \V\co I \to 2^X) \to (Y, \W\co J\to 2^Y)$ is a morphism in $\PC$, then the induced map $\phi^{-1} \co 2^J\to 2^I$ may or may not send elements of $\mcN (\W) \subseteq 2^J$ to elements of 
$\mcN (V) \subseteq 2^I$.

\begin{definition}\label{eq} We say that a morphism 
\[(f, \phi)\co (X, \V\co I\to 2^X) \maps (Y, \W\co J\to 2^Y)\] 
is an \e{equivalence} if $\phi$ is a bijection and the induced map $\phi^{-1}\co 
2^J\to 2^I$ maps $\mcN(\W)$ to $\mcN(V)$.
\end{definition}

Note that every isomorphism in $\PC$ is an equivalence, but not conversely.

\begin{proposition}\label{funct} The nerve construction extends to a functor 
\[\mcN\co \PC \to \SC,\]
 which sends a morphism $\alpha = (f, \phi)\co (X, \V) \to (Y, \W)$ to the simplicial map 
\vspace{.1in}
\begin{center}
\begin{tikzcd}[cramped, row sep=0mm]
\mcN(\V)
  \ar{r}{\alpha_*} 
  &  \mcN(\W).\\
 \F
  \ar[r, mapsto]
  & \phi(\F)
\end{tikzcd}
\end{center}
\vspace{.1in}
If $\alpha$ is an equivalence, then $\alpha_*$ is an isomorphism of simplicial complexes.
 
The multinerve construction extends to a functor
\[\hat{\mcN}\co \PC \maps \Poset\]
which sends
 $(X, \V)$ to $\hat{\mcN} (\V)$ and sends $\alpha = (f, \phi)\co (X, \V) \to (Y, \W)$ to the order-preserving map 
\vspace{.1in}
\begin{center}
\begin{tikzcd}[cramped, row sep=0mm]
\hat{\mcN}(\V)
  \ar{r}{\hat{\alpha}_*}
  & \hat{\mcN}(\W)\\
 (\F, C)
  \ar[r, mapsto]
  & (\phi(\F), [f(C)])
\end{tikzcd}
\end{center}
\vspace{.1in}
 where $[f(C)]\in \pi_0 \left( \bigcap \phi(\F)\right)$ is the path component of $f(C)$. 
  
 If $\alpha$ is an equivalence, and for each $\F\in \mcN(\V)$, the composite
\begin{equation}\label{oi}\bigcap \F \srm{f} f\left(\bigcap \F\right) \injects \bigcap \phi(\F)\end{equation}
induces a bijection on path components, then $\hat{\alpha}_*$ is an order isomorphism.
\end{proposition}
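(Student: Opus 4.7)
The plan is to dispatch the four assertions in order. For the Borsuk nerve functor $\mcN\co\PC\to\SC$, I first verify that $\alpha_*(\F)=\phi(\F)$ lies in $\mcN(\W)$ whenever $\F\in\mcN(\V)$: finiteness and non-emptiness of $\phi(\F)$ are automatic, and any $x\in\bigcap\F$ yields $f(x)\in\bigcap\phi(\F)$ since $f(\V(i))\subseteq\W(\phi(i))$ for each $i\in\F$. The simplicial property of $\alpha_*$ follows from the conventions of Section~\ref{cn-sec}, which note that any power-set map induced by a function is simplicial, and the composition law reduces to $(\phi\circ\psi)(\F)=\phi(\psi(\F))$. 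When $\alpha$ is an equivalence, $\phi^{-1}$ induces a well-defined simplicial map $\mcN(\W)\to\mcN(\V)$ that is the set-theoretic inverse of $\alpha_*$, so $\alpha_*$ is an isomorphism by the earlier observation that bijective simplicial maps are automatically simplicial isomorphisms.

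For the completed nerve, the well-definedness of $\hat{\alpha}_*(\F,C)=(\phi(\F),[f(C)])$ follows because $f(C)$ is path-connected and contained in $\bigcap\phi(\F)$, so its path component in the latter is unambiguous. For order preservation, if $(\F,C)\leqs(\F',C')$ then $\phi(\F)\subseteq\phi(\F')$ is clear, and $[f(C')]$ is a path-connected subset of $\bigcap\phi(\F')\subseteq\bigcap\phi(\F)$ meeting the path component $[f(C)]$ via the chain $f(C')\subseteq f(C)\subseteq[f(C)]$, which forces $[f(C')]\subseteq[f(C)]$. Functoriality reduces to the identity $[f([g(C)])]=[f(g(C))]$ inside $\bigcap(\phi\circ\psi)(\F)$, which holds since $f([g(C)])$ is path-connected and contains $f(g(C))$, so both sides denote the same path component.

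The fourth claim is the subtle one and is where I expect the main obstacle. The $\pi_0$-bijection hypothesis directly yields that $\hat{\alpha}_*$ is a bijection on underlying sets, so only order-preservation of the inverse needs attention. Given $(\G,D)\leqs(\G',D')$ in $\hat{\mcN}(\W)$ with preimages $(\F,C)$ and $(\F',C')$ under $\hat{\alpha}_*$, I need $C'\subseteq C$ in addition to the automatic $\F=\phi^{-1}(\G)\subseteq\phi^{-1}(\G')=\F'$. The set $C'$ is a path-connected subset of $\bigcap\F'\subseteq\bigcap\F$, so it lies in some path component $C''$ of $\bigcap\F$. Computing $\hat{\alpha}_*(\F,C'')=(\G,[f(C'')])$, I observe that $f(C'')\supseteq f(C')$, while $f(C')\subseteq D'\subseteq D$; thus $[f(C'')]$ and $D$ are path components of $\bigcap\G$ sharing the point-set $f(C')$, hence equal. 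The $\pi_0$-bijection hypothesis applied to $\F$ then forces $C''=C$, yielding $C'\subseteq C$ as required.
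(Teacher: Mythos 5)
Your argument is correct, and it is precisely the elementary tracing of definitions that the paper explicitly leaves to the reader (the paper records no written proof, noting only that one can either trace definitions or invoke general functoriality of the Grothendieck construction). In particular, your handling of the final claim --- placing $C'$ inside a component $C''$ of $\bigcap \F$, identifying $[f(C'')]$ with $D$ via the shared nonempty set $f(C')$, and then using injectivity of the induced map on $\pi_0$ to force $C'' = C$ --- correctly supplies the one step that is not purely formal.
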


This can be proved by an elementary tracing of the definitions. Alternatively, the conclusion regarding $\hat{\alpha}_*$ follows from  functoriality properties of the Grothendieck construction, as discussed in Ramras~\cite[Section 2]{Ramras-fixed} or, in more generality,~\cite[Chapter 10]{Johnson-Yau}. We leave details to the reader.

\subsection{The Multinerve Theorem} \label{compl}

We now formulate our main theorem, relating the homotopy type of a multinerve to that of the underlying space. The proof will be divided into two stages, treating locally open covers first in Section~\ref{locsec} and then CW covers in Section~\ref{CW-sec}.

\begin{definition}\label{n-ctd-def} Recall that a topological space $Z$ is said to be $n$--connected $(n\geqs 0)$ 
if it is non-empty and $\pi_k (Z, z) = 0$ for all $z\in Z$ and all $k\in \{0, \ldots, n\}$. We say that $Z$ is $(-1)$--connected if and only if it is non-empty, and all spaces are considered to be $n$--connected for $n\leqs -2$. 
 
We say that a map $f\co Z\to W$ between non-empty spaces is  $n$--connected $(n\geqs 0)$ if for each $z\in Z$, the  induced map $\pi_k (Z, z) \to \pi_k (Z, f(z))$ is an isomorphism for $k \in \{0, \ldots, n-1\}$ and a surjection for $k = n$. All maps are considered to be $n$--connected for all $n\leqs -1$, and the unique map $\emptyset \to \emptyset$ is considered to be $n$--connected for all $n$. Note  that $f$ is a weak equivalence if and only if it is $n$--connected for all $n\geqs 0$.
 \end{definition}
 
 Note that we view $\pi_0 (Z, z)$ as the set of all based homotopy classes of maps $(S^0, 1)\to (Z, z)$ (where $S^0 = \{-1, 1\})$, so if $Z$ is $n$--connected for some $n\geqs 0$, then it is path connected, and if $f\co X\to Y$ is $0$--connected, then the induced map 
 \begin{equation}\label{f*}f_* \co \pi_0 (X) \to \pi_0 (Y)\end{equation}
  is surjective, while if $f$ is $n$--connected for some $n\geqs 1$ then (\ref{f*}) is bijective.

 The  spaces appearing in the following theorem are variations on the \v{C}ech complex construction, and will be defined in Sections~\ref{cechsec} and~\ref{locsec}. 
   
\begin{theorem}\label{nerve-thm} Consider an object $(X, \V)$ in either $\LC$ or  $\CWC$. Assume that for some $n\geqs 0$ and each set $\F\in \mcN(\V)$ with cardinality $0< |\F|\leqs n$, 
every $C\in \pi_0 (\bigcap \F)$ is $(n-|\F|+1)$--connected. Then there is a natural zig-zag of the form
\begin{equation}\label{zz}
X \stackrel{\heq}{\longleftarrow} |\hC (\V)| \srm{\pi} |\hC^\delta (\V)|  \stackrel{\heq}{\longleftarrow}
|N_* \Simp (\hC^\delta (\V))| \srm{\heq} |N_* \hat{\mcN} (\V)| \isom |\hat{\mcN} (\V)|
\end{equation}
connecting $X$ to the multinerve of $\V$, with $\pi$ an $(n+1)$--connected map and all other maps weak equivalences. In particular, when $X$ is path connected, so is $|\hat{\mcN} (\V)|$, and there are isomorphisms $\pi_k (X) \isom \pi_k |\hat{\mcN} (\V)|$ for $k\leqs n$ and a surjection $\pi_{n+1} (X) \surjects \pi_{n+1} |\hat{\mcN} (\V)|$. 
\end{theorem}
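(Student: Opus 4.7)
The plan is to prove Theorem~\ref{nerve-thm} in two stages: first for locally open covers, and then to reduce the CW case to this one using the open thickenings of subcomplexes developed in Section~\ref{CW-nbhds}. The main engine is the \v{C}ech complex $\hC(\V)$, viewed as a simplicial space via Dugger--Isaksen, together with Lemma~\ref{ERW} applied to the natural map $\pi\co\hC(\V)\to\hC^\delta(\V)$ from the \v{C}ech complex to its discretized version.

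For $(X,\V)\in\LC$, the simplicial space $\hC(\V)$ has $k$-th level (after choosing a total order on $I$) the disjoint union $\coprod_{i_0\leqs\cdots\leqs i_k}\V(i_0)\cap\cdots\cap\V(i_k)$. Dugger--Isaksen's analysis (to be reviewed in Section~\ref{cechsec}) will supply a natural weak equivalence $|\hC(\V)|\srt{\heq}X$ out of the hypothesis that the interiors of the $\V(i)$ form an open cover, producing the leftmost arrow in \eqref{zz}. The map $\pi$ is the levelwise projection to path components: by condition (2) in Definition~\ref{loc}, each intersection $\bigcap\F$ decomposes as a topological disjoint union of its path components, so $\pi$ on the $k$-th level is precisely the map $\bigcap\F\to\pi_0(\bigcap\F)$ on each summand.

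To apply Lemma~\ref{ERW} with $n+1$ in place of $n$, I need the $k$-th level of $\pi$ to be $(n+1-k)$-connected for every $k\geqs 0$. For non-degenerate summands indexed by $\F$ with $|\F|=k+1$, the map $\bigcap\F\to\pi_0(\bigcap\F)$ lands in a discrete set, so it is automatically $1$-connected, and is $m$-connected for $m\geqs 2$ if and only if each path component of $\bigcap\F$ is $(m-1)$-connected. When $k\leqs n-1$, so that $|\F|\leqs n$, the theorem's hypothesis supplies $(n-k)$-connectivity of each component, giving exactly the needed $(n-k+1)$-connectivity. For $k\in\{n,n+1\}$ the required connectivity drops to $1$ or $0$, which is automatic, and for $k\geqs n+2$ the condition is vacuous. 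Ebert--Randal-Williams then yields that $\pi$ is $(n+1)$-connected, which is the one non-trivial arrow in \eqref{zz}.

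The remaining arrows are essentially combinatorial: the middle weak equivalence $|N_*\Simp(\hC^\delta(\V))|\srt{\heq}|\hC^\delta(\V)|$ is an instance of the standard fact that the realization of a simplicial set is weakly equivalent to the realization of its poset of non-degenerate simplices, and Proposition~\ref{CW} identifies that poset with $\hat{\mcN}(\V)$, giving the final isomorphism. I expect the main obstacle to be verifying that these combinatorial identifications, and the Dugger--Isaksen equivalence, are sufficiently natural in $(X,\V)$ to produce the zig-zag as a natural transformation on $\LC$. For $(X,\V)\in\CWC$, I would then pass to an open cover $\wt\V$ of $X$ by the Hatcher-style neighborhoods of Section~\ref{CW-nbhds}, arranged so that each finite intersection $\bigcap\wt\F$ deformation retracts onto the corresponding $\bigcap\F$; the cover $\wt\V$ is locally open, satisfies the same connectivity hypotheses, and has a canonically equivalent completed nerve, so the locally open case applies.
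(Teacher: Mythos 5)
Your overall architecture matches the paper's: handle locally open covers via the Dugger--Isaksen equivalence $|\hC(\V)|\srt{\heq} X$ together with Lemma~\ref{ERW} applied to the levelwise path-component projection (your connectivity bookkeeping there is correct, modulo the small point that Lemma~\ref{ERW} concerns \emph{thick} realizations, so one must also invoke goodness of $\hC(\V)$ and $\hC^\delta(\V)$ to pass to $|\cdot|$), and then reduce the CW case to this via the neighborhoods $U(K)$ of Section~\ref{CW-nbhds}, exactly as in Sections~\ref{CW-nbhds}--\ref{CW-sec}.

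However, there is a genuine gap in your treatment of the right-hand portion of the zig-zag, i.e.\ the passage from $|\hC^\delta(\V)|$ to $|\hat{\mcN}(\V)|$. First, your description of $\hC_k(\V)$ as indexed by non-decreasing tuples $i_0\leqs\cdots\leqs i_k$ does not match the \v{C}ech complex (which uses all tuples in $I^{k+1}$, and must, both because Dugger--Isaksen's theorem is proved for that object and because choosing a total order on $I$ destroys naturality in $\LC$: a morphism of covers need not respect the order, and naturality of the zig-zag is part of the statement). Second, and more seriously, your claim that $|N_*\Simp(\hC^\delta(\V))|\heq|\hC^\delta(\V)|$ follows from ``the realization of a simplicial set is weakly equivalent to the realization of its poset of non-degenerate simplices,'' with Proposition~\ref{CW} identifying that poset with $\hat{\mcN}(\V)$, does not work: the poset of non-degenerate simplices of $\hC^\delta(\V)$ is \emph{not} $\hat{\mcN}(\V)$ (for $i\neq j$ both $((i,j),C)$ and $((j,i),C)$ are distinct non-degenerate $1$--simplices, and $((i,j,i),C)$ is a non-degenerate $2$--simplex corresponding to no element of $\hat{\mcN}(\V)$), Proposition~\ref{CW} is a statement about the ordered simplicial set $\hat{\mcN}_*(\V)$ rather than about $\hC^\delta(\V)$, and the ``standard fact'' in the form you quote fails for simplicial sets that are not regular, which $\hC^\delta(\V)$ is not. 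The correct standard input is Hirschhorn's natural weak equivalence $N_*\Simp(K)\to K$ for the full category of \emph{all} simplices, and after that one still needs a genuine argument comparing $\Simp(\hC^\delta(\V))$ with $\hat{\mcN}(\V)$: the paper constructs the functor $q\co\Simp(\hC^\delta(\V))\to\hat{\mcN}(\V)$, $((i_0,\ldots,i_k),C)\mapsto(\{i_0,\ldots,i_k\},C)$, and applies Quillen's Theorem~A, checking that each fiber is isomorphic to the simplex category of the nerve of an indiscrete category and hence contractible. Some argument of this kind is needed to close your proof; as written, the step is missing.
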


Naturality of the zig-zag means that each of the maps in (\ref{zz}) is part of a natural transformation of functors $\PC\to \Top$. 
Note that when $n=0$, the conditions on $\mcN(\V)$ are vacuous.

\begin{remark}
As noted in the Introduction, 
even when all intersections $\bigcap \F$ are path connected, so that $\hat{\mcN} (\V) = \mcN (V)$, it is important that we view the assignment $(X, \V)\goesto |\mcN (\V)|$ as the composite functor
\[\LC\xmaps{\mcN(\cdot)} \SC\maps \Poset \srm{|\cdot|} \Top,\]
rather than forming the geometric realization of $\mcN (\V)$ as a simplicial complex.
\end{remark}

Before introducing  \v{C}ech complexes and proving the theorem, we wish to explain what Theorem~\ref{nerve-thm} tells us about homotopy in low dimensions.

\begin{corollary}  
For every object $(X, \V\co I \to 2^X)$ in either $\LC$ or  $\CWC$,
 there is a bijection $\pi_0 X \isom \pi_0 |\hat{\mcN} (\V)|$, given by sending the the component of a vertex $(\F, C) \in \hat{\mcN} (\V)$ to the component of $X$ containing $C$.
Furthermore, there is a  surjection
\[\pi_1 (X, x) \surjects \pi_1 (|\hat{\mcN} (\V)|, (\{i\}, C))\] 
whenever $i\in I$ and $x\in C \in \pi_0 \V(i)$.

If, in addition, for every $i\in \V$ we have $\pi_1 (\V(i), x) = 0$ for all $x\in \V(i)$, then there are isomorphisms  $\pi_1 (X, x) \isom \pi_1 (|\hat{\mcN} (\V)|, (\{i\}, C))$ $($with $x$, $V$, and  $C$ as before$)$ as well as corresponding surjections on $\pi_2$.
\end{corollary}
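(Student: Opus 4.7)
Both the $\pi_0$ statement and the $\pi_1$ surjection come from applying Theorem~\ref{nerve-thm} with $n = 0$, for which the connectivity hypothesis on $\mcN(\V)$ is vacuous, as noted after that theorem. The zig-zag~(\ref{zz}) then consists of weak equivalences surrounding a $1$-connected map $\pi$. Since weak equivalences induce bijections on $\pi_0$ and isomorphisms on $\pi_k$ at every basepoint, and a $1$-connected map induces a bijection on $\pi_0$ and a surjection on $\pi_1$, the zig-zag already yields a bijection $\pi_0 X \isom \pi_0|\hat{\mcN}(\V)|$ together with the desired $\pi_1$-surjections, once basepoints are identified correctly.

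To get the explicit description in the corollary, we plan to use the naturality of~(\ref{zz}). Given $(\F, C) \in \hat{\mcN}(\V)$, pick any $x \in C \subseteq \bigcap \F$ and any $i \in \F$; then $x$ lies in some component $C_i \in \pi_0 \V(i)$, and the inequality $(\{i\}, C_i) \leqs (\F, C)$ in $\hat{\mcN}(\V)$ places $(\{i\}, C_i)$ and $(\F, C)$ in the same component of $|\hat{\mcN}(\V)|$. Tracing $x$ through the maps of (\ref{zz})—its image in $|\hC(\V)|$ lies in the piece indexed by $i$, and this maps to the $0$-cell of $|\hC^\delta(\V)|$ indexed by $(\{i\}, C_i)$, ultimately landing at the vertex $(\{i\}, C_i)$ of $|\hat{\mcN}(\V)|$—identifies the $\pi_0$-bijection as claimed. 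The same basepoint tracking gives the surjection $\pi_1(X, x) \surjects \pi_1(|\hat{\mcN}(\V)|, (\{i\}, C))$ in the second statement.

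For the third assertion, the plan is to invoke Theorem~\ref{nerve-thm} with $n = 1$, which requires every $C \in \pi_0(\bigcap \F)$ with $|\F| = 1$ to be $1$-connected. Since $\bigcap \F = \V(i)$ for some $i$ and $C$ is a path component, we have $\pi_0(C) = 0$ trivially; and the added hypothesis $\pi_1(\V(i), x) = 0$ for all $x$ gives $\pi_1(C, x) = 0$ for $x \in C$, because the inclusion of a path component induces an isomorphism on $\pi_1$ at any basepoint of $C$. Hence $C$ is $1$-connected. With $n = 1$, the map $\pi$ in~(\ref{zz}) becomes $2$-connected, upgrading the $\pi_1$-surjection to an isomorphism and producing the desired surjection on $\pi_2$, with basepoints anchored by the same naturality argument as above.

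The principal obstacle is essentially bookkeeping: Theorem~\ref{nerve-thm} (together with the results in Section~\ref{cechsec} underlying its proof) does all the homotopical work, so what remains is to unpack the definition of the \v{C}ech complex enough to verify that a point $x \in \V(i)$ corresponds via~(\ref{zz}) precisely to the vertex $(\{i\}, C_i)$ of $\hat{\mcN}(\V)$. This step is routine once the constructions of Sections~\ref{cechsec} and~\ref{locsec} are in hand, but it is essential for translating the abstract conclusion of Theorem~\ref{nerve-thm} into the concrete bijection and basepointed surjections claimed in the corollary.
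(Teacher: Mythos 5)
Your proposal is correct, and its skeleton --- invoke Theorem~\ref{nerve-thm} with $n=0$ for the $\pi_0$-bijection and the $\pi_1$-surjection, then with $n=1$ after noting that the simple connectivity hypothesis on the $\V(i)$ passes to their path components --- is exactly the paper's. Where you genuinely diverge is in the bookkeeping step that identifies the induced bijection on $\pi_0$ and anchors the basepoints. The paper never unpacks the intermediate maps of the zig-zag: for the path component $C$ of a point $x\in X$ it forms the restricted cover $\V\cap C\co I\to 2^C$, verifies that this is again a locally open cover (re-checking the interior and coproduct-of-components conditions), applies Theorem~\ref{nerve-thm} to $(C,\V\cap C)$ to see that $|\hat{\mcN}(\V\cap C)|$ is path connected, and then uses naturality of (\ref{zz}) with respect to the morphism of covers $(C,\V\cap C)\to(X,\V)$ to pin down the image component (the CW case is noted as similar but simpler). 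You instead trace $x$ directly through (\ref{zz}); this is legitimate provided you make explicit that two of the maps point ``backwards,'' so tracing means choosing a preimage --- e.g.\ the point $(i,x)\in\hC_0(\V)$ over $x$, which $\pi$ sends to the $0$-cell indexed by $((i),C_i)$ in $|\hC^\delta(\V)|$, matched with the vertex of $|N_*\Simp\hC^\delta(\V)|$ for that object, which $q$ sends to the vertex $(\{i\},C_i)$ --- and that weak equivalences are bijections on $\pi_0$, so these choices determine the component unambiguously; your observation that $(\{i\},C_i)\leqs(\F,C)$ then handles arbitrary vertices. Your route buys uniformity (it applies verbatim to $\LC$ and $\CWC$) and avoids checking that $\V\cap C$ is again a locally open cover, at the price of unpacking the cell-level behavior of the comparison maps from Sections~\ref{cechsec} and~\ref{locsec}; the paper trades that unpacking for functoriality in $\PC$. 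Both arguments are sound.
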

\begin{proof} We phrase the proof for locally open covers; the proof for CW covers is similar, but simpler. Note that every locally open cover satisfies the hypotheses of Theorem~\ref{nerve-thm} with $n=0$,  and for $n=1$ the only requirement is that the the path components of each $V\in \V$ must be simply connected. Hence Theorem~\ref{nerve-thm} tells us that the zig-zag (\ref{zz}) induces a bijection on $\pi_0$ and has the claimed behavior on $\pi_1$ and $\pi_2$.

It remains only to check that the effect of (\ref{zz}) on path components is as described in the Corollary.
When $X$ is path connected there is nothing to check (since we know $\pi_0 (\hat{\mcN} (\V)) \isom \pi_0 (X)$). In general, let $C$ be the path component of $x\in X$. First, we define 
\[\V\cap C \co I\to 2^C\]
by
\[(\V\cap C) (i) = \V(i) \cap C.\]
We claim that $\V\cap C$ is a locally open cover of $C$. First, since the interiors of the sets in $\V(I)$ cover $X$, for each $x\in C$ we have $x\in U \subseteq \V(i)$ for some open set $U \subseteq X$ and some $i\in I$, and now $x\in U\cap C \subseteq \V(i)\cap C$ shows that $x$ is in the interior of $\V(i)\cap C$ (when we view $\V(i)\cap C$ as a subspace of $C$). Next, we know that for each $i_1, \ldots, i_k\in I$, the intersection $\bigcap_j \V(i_j)$ is the coproduct of its path components, and we need to verify that the same holds for $\bigcap_j (\V(i_j) \cap C) = (\bigcap_j \V(i_j)) \cap C$. But for each path-connected set $D\subseteq \bigcap_j \V(i_j)$, we have either $D\cap C = \emptyset$ or $D\cap C = D$, so $\left(\bigcap_j \V(i_j) \right)\cap C$ is the union of a subset of $\pi_0 \left(\bigcap_j \V(i_j) \right)$, which suffices.

It now follows from Theorem~\ref{nerve-thm} that $|\hat{\mcN} (\V\cap C)|$ is path connected. 
The inclusion $C\xhookrightarrow{\iota_C} X$ induces a map of locally open covers 
\[(\iota_C, \Id)\co (C, \V\cap C) \to (X, \V),\] 
and naturality of the zig-zag (\ref{zz}) shows that 
on $\pi_0$, (\ref{zz}) maps $C$ to the (unique) component in the image of $|\hat{\mcN} (\V\cap C)|\to |\hat{\mcN} (\V)|$, which contains all vertices (zero-simplices) of the form $(\{i\}, C)$.
\end{proof}

\begin{remark} We note that the bijection $\pi_0 X \isom \pi_0 (\hat{\mcN} (\V))$ can also be proven directly, and does not require any conditions on the path components of intersections from $\V$. Briefly, the inverse of the function $[(\F, C)] \goesto [C]$ is given by sending the path component $[x]$ of $x\in X$ to $[(\{i\}, [x])]$, where $x$ is in the interior of $\V(i)$; the main step is to show that this inverse is well-defined. Given a path $\gamma\co [0,1] \to X$ with $\gamma(0) = x$ and $\gamma(1) = y$, one builds a path in the 1-skeleton of $|\hat{\mcN} (\V)|$ from $(\{i\}, [x])$ to $(\{j\}, [y])$ $($where $y$ lies in the interior of $\V(j)$$)$  by covering $[0,1]$ with $($relatively$)$ open intervals mapping into the interiors of sets in $\V(I)$, and choosing an appropriate finite subcover.
\end{remark}

\section{The \v{C}ech complex}\label{cechsec}

In this section we introduce the simplicial machinery needed to prove the main results. 

\begin{definition}\label{Cech} Let $\V: I \to 2^X$  be a   partial cover of the topological space $X$. The \v{C}ech complex of $\V$ is the simplicial space $\hC_* (\V)\co \underline{\Delta}^\op \to \Top$ whose $n$th level is given by
\begin{equation}\label{cn} \hC_n (\V) := \{((i_0, \ldots, i_n), x)\in I^{n+1} \cross X\co x\in V_{i_0} \cap \cdots \cap V_{i_n}\},
\end{equation}
where $I^n$ is given the discrete topology. 
To define the simplicial structure on $\hC_* (\V)$, 
let $\phi\co [k]\to [n]$ be an order-preserving map. 
Then $\phi$ defines a function $\phi^*\co I^n \to I^k$, with
\[\phi^* (i_0, \ldots, i_n) = (i_{\phi(0)}, \ldots, i_{\phi(k)}),\] 
and the structure map associated to $\phi$ is given by (the restriction of) $(\phi^*, \id_X)$.
Tracing the definitions shows that this defines a functor $\hC_* (\V)\co \underline{\Delta}^\op \to \Top$.
\end{definition}

Note that $\hC_n (\V)$ is the coproduct, over the set $I^{n+1}$, of all the intersections $V_{i_0} \cap \cdots \cap V_{i_n}$. It will be convenient, notationally, to write this coproduct in the form (\ref{cn}). From this observation, one sees that \v{C}ech complexes are always good simplicial spaces, because each degeneracy map has the form $A \injects A\coprod B$ for some spaces $A, B$, and hence is a closed cofibration.

It is helpful to recognize the \v{C}ech complex as the (categorical) nerve of a certain internal category in $\Top$. For this we need a simple lemma.

\begin{lemma}\label{Cech-lemma}  Let $\mathcal{V}\co I\to 2^X$  be a partial cover. Then for each $n\geqs 1$, there is a homeomorphism 
\[\hC_n (\V) \srm{\homeo} \lim \left( \hC_{n-1} (\V) \xmaps{d_0^{n-1}} \hC_0 (\V) \stackrel{d_1} {\longleftarrow}\hC_1 (\V)\right)=: \hC_{n-1} (\V) \cross_{\hC_0 (\V)} \hC_1 (\V),\]
induced by the simplicial structure maps $\hC_n (\V) \srm{d_n}  \hC_{n-1} (\V)$ and $\hC_n (\V) \srm{d_0^{n-1}}  \hC_{1} (\V)$ in the simplicial space $\hC_* (\V)$.
\end{lemma}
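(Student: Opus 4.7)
The plan is to write down explicit formulas for the two face operators, produce a set-theoretic inverse to the comparison map, and then upgrade to a homeomorphism via the coproduct structure of each $\hC_n(\V)$.

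First, I would unwind the definitions. The map $d_n\co \hC_n(\V)\to \hC_{n-1}(\V)$ corresponds to the order-preserving injection $[n-1]\injects [n]$ omitting $n$ and so sends $((i_0,\ldots,i_n),x)\goesto ((i_0,\ldots,i_{n-1}),x)$, while iterating the zeroth face map gives $d_0^{n-1}\co \hC_n(\V)\to \hC_1(\V)$ with $((i_0,\ldots,i_n),x)\goesto ((i_{n-1},i_n),x)$. A direct check yields $d_0^{n-1}\circ d_n = d_1\circ d_0^{n-1}$---both composites send $((i_0,\ldots,i_n),x)$ to $((i_{n-1}),x)\in \hC_0(\V)$---so the pair $(d_n, d_0^{n-1})$ induces a continuous map $\Phi\co \hC_n(\V)\to \hC_{n-1}(\V)\cross_{\hC_0(\V)} \hC_1(\V)$.

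Next, I would exhibit a set-theoretic inverse. An element of the pullback is a matching pair $\bigl(((j_0,\ldots,j_{n-1}),x),\, ((k_0,k_1),y)\bigr)$ whose images in $\hC_0(\V)$ coincide, which forces $j_{n-1}=k_0$ and $x=y$. The combined data are therefore uniquely encoded by the single tuple $((j_0,\ldots,j_{n-1},k_1),x)$, and the two intersection conditions amalgamate to $x\in V_{j_0}\cap\cdots\cap V_{j_{n-1}}\cap V_{k_1}$, placing this tuple in $\hC_n(\V)$. It is immediate that this assignment inverts $\Phi$ on underlying sets.

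Finally, continuity of the inverse will come from the coproduct decompositions. Since $I$ carries the discrete topology, $\hC_n(\V)$ is the disjoint union, indexed by $(i_0,\ldots,i_n)\in I^{n+1}$, of the subspaces $V_{i_0}\cap\cdots\cap V_{i_n}\subseteq X$, and the pullback decomposes analogously, indexed by pairs of tuples that agree in the shared slot. Under $\Phi$ the summand at $(i_0,\ldots,i_n)$ is carried to the summand at $((i_0,\ldots,i_{n-1}), (i_{n-1},i_n))$, and since both summands are literally the same subspace $V_{i_0}\cap\cdots\cap V_{i_n}$ of $X$ (the fiber product $(V_{i_0}\cap\cdots\cap V_{i_{n-1}})\cap(V_{i_{n-1}}\cap V_{i_n})$ taken in $X$ is exactly this intersection), $\Phi$ restricts to the identity on each summand. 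Assembling these identifications shows $\Phi$ is a homeomorphism. The lemma is essentially bookkeeping; the only delicate point is keeping track of which indices each face map drops.
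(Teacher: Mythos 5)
Your proof is correct and takes essentially the same approach as the paper: both arguments check directly that the map induced by $d_n$ and $d_0^{n-1}$ is a continuous bijection and then upgrade it to a homeomorphism using the decomposition of $\hC_n(\V)$ as a coproduct over tuples in $I^{n+1}$. The only cosmetic difference is that the paper verifies that the forward map is open by computing images of basic open sets, while you exhibit the explicit inverse and identify the map summand-by-summand with the identity on $V_{i_0}\cap\cdots\cap V_{i_n}$.
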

\begin{proof} 
It follows from the simplicial identities and the universal property of limits that $d_n$ and $d_0^{n-1}$ induce a continuous map
\[\alpha\co \hC_n (\V) \maps \hC_{n-1} (\V) \cross_{\hC_0 (\V)} \hC_1 (\V).\]
Tracing the definitions, one sees that $\alpha$ is a (continuous) bijection, so we just need to check that $\alpha$ is an open map. Each open set in $\hC_n (\V)$ is a coproduct of sets of the form $\{(i_0, \ldots, i_n)\} \cross (U \cap V_{i_0} \cap \cdots \cap V_{i_n})$, where $U\subseteq X$ is open and $i_0, \ldots, i_n \in I$.
So it suffices to observe that 
$\alpha (\{(i_0, \ldots, i_n)\} \cross (U \cap V_{i_0} \cap \cdots \cap V_{i_n}))$ is equal to the intersection of $\hC_{n-1} (\V) \cross_{\hC_0 (\V)} \hC_1 (\V)$ with the set
\[\left(\{(i_0, \ldots, i_{n-1})\} \cross (U \cap V_{i_0} \cap \cdots \cap V_{i_{n-1}})\right) \cross \left( \{(i_{n-1}, i_n)\} \cross (U \cap V_{i_{n-1}}\cap V_{i_n}) \right),\]
which is open in 
$\hC_{n-1} (\V) \cross \hC_1 (\V)$.
\end{proof}
 
\begin{proposition}\label{Cech-cat}   Let $\mathcal{V}\co I\to 2^X$  be a partial cover of the space $X$. 
Then  $\hC_* (\V)$ is isomorphic to the nerve of an internal category $\underline{\V}$ in $\Top$  with object space $\hC_0 (\V) = \coprod_{i\in I} \V(i)$ and morphism space $\hC_1 (\V) = \coprod_{(i,j) \in I^2} \V(i)\cap \V(j)$, and with domain and range maps given by the face operators $d_1$ and $d_0$ in $\hC_* (\V)$ $($respectively$)$.
\end{proposition}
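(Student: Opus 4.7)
The plan is to complete the specification of $\underline{\V}$ --- the proposition already fixes the object space, morphism space, source, and target --- then verify the category axioms, and finally identify its categorical nerve with $\hC_*(\V)$ by iterating Lemma~\ref{Cech-lemma}. Writing a point of $\hC_0(\V)$ as $(i,x)$ with $x\in\V(i)$ and a point of $\hC_1(\V)$ as $((i,j),x)$ with $x\in\V(i)\cap\V(j)$, I would take the identity on $(i,x)$ to be the degeneracy $s_0(i,x)=((i,i),x)$. For composition I would use Lemma~\ref{Cech-lemma} at $n=2$ to identify $\hC_2(\V)$ with the space of composable pairs $\hC_1(\V)\cross_{\hC_0(\V)}\hC_1(\V)$, and then transport the face map $d_1\co \hC_2(\V)\to\hC_1(\V)$, $((i,j,k),x)\goesto((i,k),x)$, across this identification to obtain the composition law. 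All the structure maps are continuous, because they are restrictions of identities on the $X$-factor of $\hC_n(\V)$ together with maps between discrete index sets.

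Unit laws and the compatibility of composition with source and target are direct instances of the simplicial identities among the $d_i$'s and $s_j$'s in $\hC_*(\V)$. Associativity compares two maps $\hC_3(\V)\to\hC_1(\V)$ built from iterated applications of $d_1$, and reduces to the simplicial identity $d_1 d_2 = d_1 d_1$ on $\hC_3(\V)$; under the identification from Lemma~\ref{Cech-lemma} this translates to $(fg)h = f(gh)$.

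Next, iterating Lemma~\ref{Cech-lemma} yields, for each $n\geqs 1$, a homeomorphism
\[
\hC_n(\V)\srm{\homeo}\hC_1(\V)\cross_{\hC_0(\V)}\cdots\cross_{\hC_0(\V)}\hC_1(\V) \qquad (n\textrm{ factors}),
\]
whose $k$-th projection is the face map sending $((i_0,\ldots,i_n),x)$ to $((i_{k-1},i_k),x)$. The right-hand side is, by definition, the $n$th space of the nerve $N_*\underline{\V}$. To promote this levelwise identification to an isomorphism of simplicial spaces, one checks that the simplicial operators of $\hC_*(\V)$ match those induced by the internal-category structure on $N_*\underline{\V}$: deleting an outermost index realizes source or target projection, deleting an inner index realizes composition of two consecutive morphisms, and repeating an index realizes insertion of an identity.

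The main obstacle is organizational rather than conceptual: the proof requires a moderate amount of bookkeeping with simplicial identities. But since Lemma~\ref{Cech-lemma} forces every $\hC_n(\V)$ to be determined by the pair $(\hC_0(\V),\hC_1(\V))$ together with the maps $d_0$, $d_1$, $s_0$, and $d_1\co\hC_2(\V)\to\hC_1(\V)$, the matching with the nerve is essentially forced once the internal-category structure is correctly in place.
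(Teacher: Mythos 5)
Your proposal is correct and follows essentially the same route as the paper: composition is defined by transporting the face map $d_1\co \hC_2(\V)\to\hC_1(\V)$ across the homeomorphism of Lemma~\ref{Cech-lemma}, and the identification $N_*\underline{\V}\isom\hC_*(\V)$ is obtained by iterating that lemma to match the nerve's iterated fiber products with the levels of the \v{C}ech complex. Your extra explicit checks (unit, associativity via simplicial identities) are details the paper leaves implicit, not a different argument.
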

\begin{proof} We need to define a continuous composition operation for this category. By definition of an internal category, the domain of this operation must be the fiber product $\hC_{1} (\V) \cross_{\hC_0 (\V)} \hC_1 (\V) \homeo \hC_2 (\V)$ appearing in Lemma~\ref{Cech-lemma}, so we can define composition by simply composing this homeomorphism with the face map $d_1 \co \hC_2 (\V)\to \hC_1 (\V)$. 
The nerve of the resulting category, by definition, is a simplicial space with $n$th level given by an $n$--fold iterated fiber product of the form
\[\hC_1 (\V) \cross_{\hC_0 (\V)} \hC_1 (\V)   \cross_{\hC_0 (\V)} \cdots  \cross_{\hC_0 (\V)}  \hC_1 (\V),\]
and general properties of limits provide natural homeomorphisms from these iterated fiber products to the ones from Lemma~\ref{Cech-lemma}. These maps combine to give the desired isomorphism of simplicial spaces.
\end{proof}

We call the category $\underline{\V}$ in Proposition~\ref{Cech-cat} the \v{C}ech category of $\V$. 
We will need two general facts about internal category theory in $\Top$. Let $\C$ and $\D$ be  internal categories in $\Top$. A functor $\C\to \D$ is called \e{continuous} if its defining functions between object and morphism spaces are continuous, and a natural transformation between functors $\C\to \D$ is called continuous if its defining function from objects in $\C$ to morphisms in $\D$ is continuous. It is immediate from the definitions that a continuous functor $\C\to \D$ induces a map of simplicial spaces $N_*\C \to N_* \D$ between the nerves, and hence a continuous map between their geometric realizations. A natural transformation induces a continuous functor $\C \cross \{0,1\} \to \D$, where $\{0,1\} = [1]$ is the poset with $0 < 1$, and the projection maps induce an isomorphism of simplicial spaces $N_*(\C\cross \{0,1\}) \srm{\homeo} N_* (\C) \cross N_* (\{0,1\})$. Moreover, since $|N_* (\{0,1\})| \homeo [0,1]$ is compact, the projections onto the factors induce a homeomorphism $| N_* (\C) \cross N_* (\{0,1\})| \srm{\homeo} |N_* (\C)| \cross [0,1]$. In summary, we have established the following well-known fact. (See also~\cite[Proof of Lemma 2.4]{DI-hyper}.)

\begin{lemma}\label{cnt} A continuous natural transformation between continuous functors induces a homotopy between their geometric realizations.
\end{lemma}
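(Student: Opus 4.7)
The plan is to exhibit a continuous functor $H \co \C \times [1] \to \D$, where $[1]$ denotes the poset $\{0 < 1\}$ regarded as a discrete internal category in $\Top$, which encodes the natural transformation $\eta$; then to realize this functor and use the given identifications to package the result as a homotopy between $|F|$ and $|G|$.

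First I would explicitly construct $H$. On the object space $\Ob(\C) \times \{0,1\}$, define $H$ to be $F_0$ on the $\{0\}$--slice and $G_0$ on the $\{1\}$--slice; this is continuous as a disjoint union of continuous maps. On the morphism space $\Mor(\C) \times \Mor([1])$, which decomposes as three clopen pieces corresponding to $\id_0$, $\id_1$, and the unique morphism $0\to 1$, I would set $H(f,\id_0) := F(f)$, $H(f,\id_1) := G(f)$, and $H(f\co x\to y,\,0\to 1) := \eta_y \circ F(f) = G(f) \circ \eta_x$. Continuity of the third assignment follows because it is the composition in $\D$ applied to continuous maps (namely $F$ or $G$ on $f$, and $\eta$ precomposed with the continuous source or target map of $\C$). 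Functoriality of $H$ reduces to the naturality squares for $\eta$ together with the functoriality of $F$ and $G$, using that $[1]$ admits no non-trivial composites.

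Next, applying $N_*$ produces a map of simplicial spaces $N_* H \co N_*(\C \times [1]) \to N_* \D$. Using the two observations already recorded in the paragraph preceding the lemma, the projections give an isomorphism of simplicial spaces $N_*(\C \times [1]) \isom N_*\C \times N_*[1]$, and the compactness of $|N_*[1]| \homeo [0,1]$ ensures that the canonical map $|N_*\C \times N_*[1]| \to |N_*\C| \times |N_*[1]|$ is a homeomorphism. Composing with $|N_* H|$ yields a continuous map
\[ K \co |N_*\C| \times [0,1] \maps |N_*\D|. \]

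Finally I would check that the two inclusions $\C \hookrightarrow \C \times [1]$ at levels $0$ and $1$ recover $F$ and $G$ after composing with $H$, which is immediate from the case-wise definition of $H$. This gives $K(\cdot, 0) = |F|$ and $K(\cdot, 1) = |G|$, so $K$ is the desired homotopy. The only delicate point in the argument is the continuity and functoriality of $H$ on the morphism space, but this is a routine unpacking of the notion of a continuous natural transformation of internal categories in $\Top$.
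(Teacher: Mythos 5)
Your proposal is correct and follows essentially the same route as the paper: the paper's ``proof'' is precisely the discussion preceding the lemma, which notes that a continuous natural transformation gives a continuous functor $\C \cross \{0,1\} \to \D$, that the projections identify $N_*(\C \cross \{0,1\})$ with $N_*(\C) \cross N_*(\{0,1\})$, and that compactness of $|N_*(\{0,1\})| \homeo [0,1]$ lets realization commute with the product, producing the homotopy. You have simply filled in the explicit case-wise definition of the functor on morphisms and the endpoint checks, which the paper leaves as immediate from the definitions.
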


\section{Multinerves of locally open covers}\label{locsec}
 
This section consists of the proof of Theorem~\ref{nerve-thm} in the case of a locally open cover $(X, \V\co I\to 2^X)$. We will work our way from left to right along the zig-zag (\ref{zz}). The first step is essentially due to Dugger and Isaksen, and we state it separately for later reference.

\begin{proposition}[Dugger-Isaksen]\label{cech-prop} Let $X$ be a topological space and let $\V\co I\to 2^X$ be a cover of $X$ such the interiors of the sets in $\V(I)$ form an open cover of $X$. Then the natural map $e\co |\hC (\V)| \to X$ is a weak equivalence.
\end{proposition}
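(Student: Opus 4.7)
The plan is to follow Dugger--Isaksen's approach~\cite{DI-hyper}, verifying directly that the augmentation $e\co |\hC_* (\V)|\to X$ satisfies a strong lifting property: for any compact CW pair $(K,L)$, any map $\alpha\co K\to X$ together with a prescribed lift of $\alpha|_L$ to $|\hC_*(\V)|$ extends to a continuous lift on all of $K$ with $e\circ\tilde\alpha = \alpha$ on the nose. From this one immediately reads off that $e$ induces isomorphisms on all homotopy sets and groups.

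Given $\alpha\co K\to X$, the construction of the lift proceeds as follows. Using compactness of $\alpha(K)$ together with the hypothesis that the interiors $\{\mathrm{int}\,\V(i)\}_{i\in I}$ form an open cover of $X$, choose a sufficiently fine triangulation of $K$ and an index assignment $v\mapsto i_v \in I$ on vertices so that the open star of each $v$ maps into $\mathrm{int}\,\V(i_v)$. Then for every closed simplex $[v_0,\dots,v_k]$ of $K$, the image under $\alpha$ lies in the intersection $\V(i_{v_0})\cap\cdots\cap\V(i_{v_k})$. Parametrizing the simplex by barycentric coordinates, I would define
\[
\tilde\alpha(t_0 v_0+\cdots+t_k v_k)\;=\;\bigl[\bigl((i_{v_0},\dots,i_{v_k}),\,\alpha(t_0 v_0+\cdots+t_k v_k)\bigr),\,(t_0,\dots,t_k)\bigr]\in|\hC_*(\V)|.
\]
These piecewise assignments agree on shared faces since the face operators of $\hC_*(\V)$ act by deleting indices while preserving the point in $X$ (this is exactly the compatibility built into the definition of $\hC_n(\V)$ and its simplicial structure). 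The result is a continuous lift $\tilde\alpha$ with $e\circ\tilde\alpha=\alpha$.

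Taking $K=S^n$ (with $L=\emptyset$) gives surjectivity of $e_*$ on every $\pi_n$, and taking $K = D^{n+1}$, $L=S^n$ gives injectivity on $\pi_n$. The main technical obstacle is the relative case, where a lift on $L$ is prescribed: one must arrange the triangulation and index assignment on $K$ to be compatible with the given data on $L$. I would handle this by first passing to a common refinement of the triangulation underlying the prescribed lift and one adapted to $\alpha$ on all of $K$, using the degeneracy operators of $\hC_*(\V)$ to re-express the prescribed lift in terms of an index assignment on this refined triangulation of $L$, and then extending this index assignment (using the open-star property again) over all of $K$. Once the relative lifting property is verified, $e$ is a weak equivalence.
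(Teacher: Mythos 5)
The absolute half of your construction (triangulate $K$ finely, assign indices via open stars, and lift simplexwise) is sound and is essentially how one gets surjectivity; but the engine of your proof, the \emph{strict} relative lifting property ($e\circ\tilde\alpha=\alpha$ on the nose, extending an arbitrary prescribed lift on $L$), is false in the stated generality, so the injectivity half collapses. Concretely, take $X=[0,1]$, $\V(1)=X$, $\V(2)=\{0\}$; the hypothesis holds since $\mathrm{int}\,\V(1)=X$. For $t>0$ every simplex $((i_0,\dots,i_n),t)$ of $\hC_*(\V)$ has all indices equal to $1$ and is totally degenerate, so $e^{-1}(t)$ is the single point $[((1),t)]$, while $e^{-1}(0)$ is the (contractible but not one-point) realization of the indiscrete nerve on $\{1,2\}$. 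Now let $K=[0,1]$, $L=\{0\}$, $\alpha=\mathrm{id}$, and prescribe the lift at $0$ to be the midpoint of the nondegenerate $1$--cell $((1,2),0)$. Any strict lift is forced to be $t\mapsto[((1),t)]$ for $t>0$, and this cannot converge to the prescribed point as $t\to 0$: the set $\{((1,2),0)\}\times\mathrm{int}\,\Delta^1$, together with its degenerate copies, is a saturated open neighborhood of that point containing no $[((1),t)]$ with $t>0$. So no strict lift exists even though $e$ is a weak equivalence. (Note $e$ is not a fibration, so contractibility of fibers buys you nothing directly.)

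What is true, and what a lifting-style proof must use, is the weaker criterion in which $\tilde\alpha|_L$ is the prescribed lift but $e\circ\tilde\alpha$ is only required to agree with $\alpha$ up to homotopy rel $L$; and it is exactly the relative step that your sketch does not supply. An arbitrary continuous lift on $L$ is not of your piecewise ``index assignment plus $\alpha$--value'' form -- for instance its restriction to a fiber $e^{-1}(x)$ can be an arbitrary map into the infinite simplex on $\{i: x\in\V(i)\}$ -- so it cannot be ``re-expressed via degeneracies''; and replacing it by a homotopic lift of that form changes the boundary data and itself requires a controlled homotopy over $X$, which is precisely the missing content. This is why the paper, following Dugger--Isaksen, takes a different route: May's recognition principle reduces the claim to the preimages of finite intersections of interiors, where the restricted cover contains the whole space; there one has an explicit section of the \v{C}ech category augmentation and a continuous natural transformation (an extra degeneracy) giving a deformation, which supplies exactly the relative/injectivity information your argument leaves open. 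If you want to pursue a direct lifting proof, you would need to prove that homotopy-relative lifting statement (or a simplicial-approximation theorem for maps into $|\hC_*(\V)|$ over $X$), not the strict one.
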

\begin{proof}
We follow the methods in Dugger--Isaksen~\cite[Section 2]{DI-hyper}.
Consider the one-element cover $\{X\}$ of $X$. Its associated \v{C}ech complex is the constant simplicial space $X_*$, with $X_n = X$ for all $n\geqs 0$ and all simplicial structure maps equal to the identity. The \v{C}ech category $\underline{\{X\}}$ has $X$ as both its object space and its morphism space, and all morphisms are identity morphisms.
The inclusions $V\injects X$, $V\in \V$, induce a continuous functor $\epsilon\co \underline{\V} \to \underline{\{X\}}$. 

The geometric realization of $\underline{\{X\}}$ is naturally homeomorphic to $X$, and we will show that $e = |\epsilon|$ is a weak equivalence.
 For open covers, this is~\cite[Theorem 2.1]{DI-hyper}, and the proof in the present setting is nearly identical. We briefly sketch the argument. A general recognition principle for weak equivalences due to May~\cite{May-we-qf} (see also~\cite[Theorem 6.7.9]{tomDieck-AT}) reduces the problem to verifying that 
 \[e\co e^{-1} (\inter(\V(i_1))\cap \cdots \cap \inter(\V(i_n))) \to \inter(\V(i_1))\cap \cdots \cap \inter(\V(i_n))\] 
 is a weak equivalence for each set $\{i_1, \ldots, i_n\}\in \mcN(\V)$. 
 
 Dugger and Isaksen show by a direct analysis that for every open set $U\subseteq X$,
 $e^{-1} (U)$ is naturally homeomorphic to the geometric realization of the \v{C}ech complex for the indexed cover $\W\co I \to 2^{U}$ of $U$ defined by $i\goesto \V(i)\cap U$. Note that if $U$ is a subset of $\V(i)$ for some $i\in I$ (for instance, if $U = \inter(\V(i_1))\cap \cdots \cap \inter(\V(i_n))$ for some $\{i_1, \ldots, i_n\} \in \mcN(\V)$), then this cover has $U$ in its image. 
 Next, \cite[Lemma 2.4]{DI-hyper} states that if $\W\co J\to 2^Y$ is an open cover of a space $Y$ satisfying $\W(j_0) = Y$ for some $j_0 \in J$, then $e\co |\hC (\W)| \to Y$ is a homotopy equivalence. In fact no assumptions on the subspaces $W_j$, $j\neq j_0$, are needed for the proof. For completeness, we give the argument in the categorical framework discussed in Section~\ref{cechsec} (this is essentially the ``slick" proof alluded to after the proof of~\cite[Lemma 2.4]{DI-hyper}). There is a continuous functor $s\co \underline{\{Y\}} \to \underline{\W}$, defined on objects by mapping $y\in Y$ to $(\{j_0\}, y)$; the behavior of $s$ on morphisms is then forced, since $\underline{\{Y\}}$ has only identity morphisms.
Note that $\epsilon\circ s$ is the identity. By Lemma~\ref{cnt}, it will now suffice to produce a continuous natural transformation $\eta$ from $s\circ \epsilon$ to the identity. Such a natural transformation is just a map
 \begin{equation*}\hC_0 (\W) =  \{ (j, y) \in J\cross Y: y \in \W(j)\}  
 \maps \hC_1 (\W) =\{ ((j,k), y) \in J^2 \cross Y: y\in \W(j) \cap \W(k)\},
 \end{equation*}
and we define $\eta(j, y) = ((j_0, j), y)$, which is a morphism from $(j_0, y) = s\circ \epsilon (j,y)$ to $(j,y)$ itself.
It is immediate from the definitions that $\eta$ is continuous and natural. This completes the proof that $e\co |\hC (\V)|\to X$ is a weak equivalence.
\end{proof}

Next, we will compare the geometric realization of $\hC (\V)$ to that of the simplicial set $\hC^\delta (\V)$ defined by composing the functor $\hC (\V)\co \underline{\Delta}^\op \to \Top$ with the path component functor $\pi_0 \co \Top \to \Set$. 
Viewing $\hC^\delta (\V)$ as a (level-wise discrete) simplicial space, 
there is a natural map of simplicial spaces $\hC (\V) \to \hC^\delta (\V)$ given by sending each point in $\hC_k (\V)$ to its path component, and the map $\pi$ in (\ref{zz}) is the induced map on realizations. Note that continuity of the projection $\hC_k (\V) \to \pi_0 (\hC_k (\V))$ is equivalent to our hypothesis that the $k$--fold intersections from $\V$ are coproducts of their path components, so $\pi$ is indeed continuous. 

As noted after Definition~\ref{Cech}, $\hC (\V)$ is a good simplicial space, as is $\hC^\delta_k (\V)$ since it is in fact a simplicial set.
 Our connectivity hypothesis guarantees that for each $k\geqs 0$, the map $\hC_k (\V) \to  \hC^\delta_k (\V)$ is $(n-k+1)$--connected, so by Lemma~\ref{ERW} the map $|\hC (\V)|\to |\hC^\delta (\V)|$ is $(n+1)$--connected.
 Naturality of the projection maps now implies that $\pi\co |\hC (\V)|\to |\hC^\delta (\V)|$ is $(n+1)$--connected as well.

To compare $|\hC^\delta (\V)|$ and $|\hat{\mcN} (\V)|$, we will introduce one more intermediate space.
For each simplicial set $K_*$, there is a natural weak equivalence from the nerve of its category of simplices $N_* (\Simp (K_*))$ to $K_*$~\cite[Theorem 18.9.3]{Hirschhorn}, so it will suffice to compare $\Simp (\hC^\delta (\V))$ and 
$\hat{\mcN}(\V)$. 
The objects of $\Simp (\hC^\delta (\V))$ are in bijection with the disjoint union of the sets $\hC^\delta_k (\V)$, and hence may be written as pairs $((i_0, \ldots, i_k), C)$, where $i_j \in I$ for each $j$ and $C\in \pi_0 (\V(i_0) \cap \cdots \cap \V(i_k))$. Morphisms in this category from $((i_0, \ldots, i_k), C)$ to $((j_0, \ldots, j_l), C')$ correspond to order-preserving maps $\phi\co [k]\to [l]$ satisfying $i_p = j_{\phi(p)}$ and $C' \subseteq C$ (equivalently, $C'$ is sent to $C$ by the map $\pi_0 (\V(j_0) \cap \cdots \cap \V(j_l)) \to \pi_0 (\V(i_0) \cap \ldots \cap \V(i_k))$ induced by the inclusion $\V(j_0) \cap \cdots \cap \V(j_l) \injects \V(j_{\phi(0)}) \cap \cdots \cap \V(j_{\phi (k)}) = \V(i_0) \cap \cdots\cap \V(i_k)$).

We claim that there is a functor 
\[q\co \Simp (\hC^\delta (V))\to \hat{\mcN}(\V)\]
sending $((i_0, \ldots, i_k), C)$ to $(\{i_0, \ldots, i_k\}, C)$. Comparing the above description of morphisms in $\Simp (\hC^\delta (V))$  with our description of the order relation in $\hat{\mcN}(\V)$ shows that if there is a morphism
$((i_0, \ldots, i_k), C) \to ((j_0, \ldots, j_l), D)$ in $\Simp (\hC^\delta (V))$, then 
$(\{i_0, \ldots, i_k\}, C) \leqs (\{j_0, \ldots, j_l\}, D)$ in $\hat{\mcN}(\V)$, and 
since all diagrams in the poset $\hat{\mcN} (\V)$ commute
it is automatic that $q$ respects composition.
We will apply Quillen's Fiber Theorem~\cite[Theorem A]{Quillen} to show that $q$ is a weak equivalence. Since 
all diagrams in $\hat{\mcN}(\V)$ commute, the Quillen fibers of $q$ are all full subcategories of $\Simp (\hC^\delta (V))$. 
Hence it will suffice to show that for each $(\F, C)\in  \hat{\mcN}(\V)$, the full subcategory $F$ of $\Simp (\hC^\delta (V))$
on the set of objects 
\begin{eqnarray*}
\{((j_0, \ldots, j_l), D) \in \coprod_k \hC_k^\delta (\V)\co (\{j_0, \ldots, j_l\}, D) \leqs (\F, C)\}\\
= \{((j_0, \ldots, j_l), D) \in \coprod_k \hC_k^\delta (\V)\co \{j_p\}_p \subseteq \F \textrm{ and } C \subseteq D\}. 
\end{eqnarray*} 
is contractible.
Note that for each list $(\W(j_0), \ldots, \W(j_l))$ with $\{j_p\}_p \subseteq \F$, there is a unique path component $D \in \pi_0 (\W(j_0) \cap \cdots \cap \W(j_l))$ containing $C$, and so objects in $F$ are in bijection with lists of elements from the set $\F$. 
 This gives an isomorphism from $F$ to the simplex category of the nerve of the indiscrete category with object set
$\F$ (that is, the category with object set $\F$ and each morphism set a singleton). Since the indiscrete category on a set $S$  is equivalent to the trivial category with one object and one morphism, its nerve is contractible, and it follows from~\cite[Theorem 18.9.3]{Hirschhorn} that the simplex category of its nerve is contractible as well.

This completes the proof of Theorem~\ref{nerve-thm} in the case of locally open covers.

\section{CW neighborhoods}\label{CW-nbhds}

Consider a   CW cover $(X, \V\co I \to 2^X)$. We will use the open neighborhoods of subcomplexes constructed in~\cite[Appendix]{Hatcher} to enlarge the sets $K\in \V(I)$ into an open cover $\U(\V)\co I \to 2^X$, which comes with a natural morphism of covers $(X,\V)\to (X,\U(\V))$ that induces an isomorphism on nerves and a homotopy equivalence on each intersection. 
In this section, we establish the essential properties of this construction.

Let $X$ be a CW complex, with skeleta $X^{(n)}$. We will fix a choice of characteristic map $D^n\to X$ for each $n$--cell of $X$, where $D^n$ is the closed unit disk in $\bbR^{n}$.
If $\phi\co D^n \to X$ is the characteristic map of an $n$-cell, then we call $\phi(D^n \setminus \partial D^n)$ an \e{open cell} of $X$. When $n=0$, we set $\partial D^0 = \emptyset$, so each $0$--cell of $X$ is an open cell.

Recall that each point $x\in X$ lies in a unique open cell, whose characteristic map we denote by $\phi_x$; if the domain of $\phi_x$ is $D^n$ then we say $x$ has dimension $n$. If $x = \phi_x (0)$, then we say that $x$ is a \e{center-point} of $X$.
We will define a function $u$ from $X$ to its set of center-points by successively pushing $x\in X$ radially to the boundary of its open cell until we reach a center-point.
(This function will be discontinuous unless $X$ is zero-dimensional.)
Formally, if $x$ is the center-point of an open cell $e$, then we define $u(x) = x$; so in particular if $x\in X^{(0)}$ we have $u(x) = x$.  
If $x$ is \e{not} a center-point, then $x = \phi_x (z)$ for some $z\neq 0$, and we recursively define $u(x) := u(\phi_x (z/|z|))$. Note that $\phi_x (z/|z|)$ has dimension strictly less than the dimension of $x$, so $u$ is well-defined, and it follows that if $x\in X^{(n)}$, then $u(x)\in X^{(n)}$ as well. 
More generally, if $K\subseteq X$ is a subcomplex, then $u(K) \subseteq K$.
Note also that $u\circ u = u$.

\begin{definition}\label{CWn} For each subset $A\subseteq X$, we define 
\[U(A) = U(A, X) = u^{-1} (u(A)).\]
\end{definition}

\begin{remark}
The set $U(A)$ is the open neighborhood of $A$ constructed in~\cite[Appendix]{Hatcher} for the parameter $\epsilon = 1$; for completeness we will establish the relevant properties directly. 

We note that for simplicial complexes, it was recently observed in~\cite{FFH} that there is an alternative construction of neighborhoods that can be used to prove the classical Nerve Theorem, and these neighborhoods can also be used to prove the simplicial case of Theorem~\ref{nerve-thm}. See~\cite[Lemma 70.1]{Munkres-EAT} for a discussion of these neighborhoods.
\end{remark}

The following observation will be used to describe the topology of the neighborhoods $U(A)$.

\begin{lemma}\label{open}
Let $Z$ be a topological space. Say $Y\subseteq Z$ is closed and $W\subseteq Y$ is $($relatively$)$ open in $Y$. If $L\subseteq Z$ satisfies $Y \cap L \subseteq W$ and $Y\cup L = Z$, then $W\cup L$ is open in $Z$.
\end{lemma}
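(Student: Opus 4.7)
The plan is to prove openness of $W \cup L$ by showing that its complement $Z \setminus (W \cup L) = (Z \setminus W) \cap (Z \setminus L)$ is closed in $Z$. The hypothesis $Y \cup L = Z$ says every point outside $L$ lies in $Y$, so $Z \setminus L \subseteq Y$, which lets me rewrite the complement as $(Y \setminus W) \cap (Z \setminus L)$.

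The key step is to use the hypothesis $Y \cap L \subseteq W$ to eliminate the factor involving $L$ entirely. Contrapositively, this hypothesis says that a point of $Y$ not lying in $W$ cannot lie in $L$, i.e.\ $Y \setminus W \subseteq Z \setminus L$. Intersecting with $Z \setminus L$ is therefore redundant, and the complement collapses to just $Y \setminus W$.

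It then suffices to note that $Y \setminus W$ is closed in $Z$: since $W$ is open in $Y$, the set $Y \setminus W$ is closed in $Y$, and since $Y$ is closed in $Z$ by hypothesis, $Y \setminus W$ is closed in $Z$. Hence $W \cup L$ is open in $Z$. There is no real obstacle here; the only subtle point is recognizing that the compatibility condition $Y \cap L \subseteq W$ is precisely what forces the $Z \setminus L$ factor of the complement to be absorbed.
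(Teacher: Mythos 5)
Your proof is correct and is essentially the paper's argument: both compute the complement of $W\cup L$ in $Z$, use $Y\cup L = Z$ and $Y\cap L\subseteq W$ to identify it with a set closed in $Z$, and conclude. The only cosmetic difference is that the paper first writes $W = V\cap Y$ with $V$ open in $Z$ and identifies the complement as $Y\setminus V$, whereas you identify it directly as $Y\setminus W$ (the same set) and cite transitivity of closedness, which is slightly more streamlined.
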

\begin{proof} Write $W = V\cap Y$, with $V$ open in $Z$. Then 
\[W\cup L = (V\cap Y) \cup L = (V\cup L) \cap (Y \cup L) = (V\cup L)\cap Z = V\cup L,\]
and therefore 
\[Z\setminus (W\cup L) = (Y\cup L)\setminus (V\cup L) = Y\setminus V,\]
which is closed in $Z$ since $Y$ and $V^c$ are closed in $Z$.
\end{proof}

Here are the properties of these neighborhoods that we will need.

\begin{proposition}\label{CWnp}
Let $X$ be a CW complex. The neighborhoods constructed in Definition~$\ref{CWn}$ satisfy the following properties:
\begin{enumerate}
\item For each $A\subseteq X$, the set $U(A)$ is an open neighborhood of $A$.
\item $U$ is order-preserving: If $A \subseteq B$, then $U(A) \subseteq U(B)$.
\item $U$ distributes over arbitrary unions and over intersections of subcomplexes: 
\begin{enumerate} 
\item for every $\mathcal{A} \subseteq 2^X$, we have $\bigcup_{A\in \mathcal{A}} U(A)  = U\left(\bigcup \mathcal{A}\right)$, and
\item if $\mathcal{K} \subseteq 2^X$ is a set of subcomplexes of $X$, then $\bigcap_{K\in \mathcal{K}} U(K) = U\left(\bigcap \mathcal{K}\right)$.
\end{enumerate}
\item  If $K$ is a subcomplex of $X$, then
\begin{enumerate} 
\item A center-point of $X$ is contained in $U(K)$ if and only if it is contained in $K$;
\item  $U(K)$ deformation retracts to $K$ $($strongly$)$;
\item Each path component of $U(K)$ is open in $X$.
\end{enumerate}
\end{enumerate}
\end{proposition}
\begin{proof}
(2) \& (3): It is immediate that $X\goesto U(X) = u^{-1} (u(X))$ is order-preserving and distributes over unions, since images and inverse images have these properties.
In general $u$ does not distribute over intersections, but if $K\subseteq X$ is a subcomplex and $C(X)$ is the set of centerpoints of $X$, then $u(K) = K\cap C(X)$, so for a collection $\K\subseteq 2^X$ of subcomplexes we have $u(\bigcap \K) = \bigcap_{K\in \K} u(K)$.

(1): A simple induction over skeleta shows that for each $x\in X$ and each $n\geqs 0$, $U(\{x\})\cap X^{(n)}$ is open in $X^{(n)}$, which implies $U(\{x\})$ is open in $X$; then since 
\begin{equation}\label{u}U(A) = \bigcup_{a\in A} U(\{a\}),\end{equation} 
it follows that $U(A)$ is an open neighborhood of $A$.

(4a): 
If $c$ is a center-point and $c\in U(K) = u^{-1} (u(K))$, then $c = u(c)\in u(K) \subseteq K$. The converse is immediate since $K \subseteq U(K)$.

(4b):   In order to check continuity of the deformation retraction from $U(K)$ onto $K$ defined below, we will need some observations regarding the topology on $U(K)$.
Define
\[U^{(n)} (K) := (U(K) \cap X^{(n)}) \cup K.\]
We claim that 
\begin{equation}\label{un}U(K) \cap X^{(n)} = U(K^{(n)}, X^{(n)}),\end{equation}
where on the right we are taking the neighborhood with respect to the same characteristic maps as we used for $X$. The containment $U(K) \cap X^{(n)} \subseteq U(K^{(n)}, X^{(n)})$ follows from the fact that the function $u$ maps both $K$ and $X^{(n)}$ to themselves; the reverse containment is immediate.

We claim that $U^{(n)} (K)$ is an open subset of the subcomplex $X^{(n)} \cup K$.
Equation (\ref{un}) implies that $U^{(n)} (K) = U(K^{(n)}, X^{(n)}) \cup K$. We now apply Lemma~\ref{open}, with $Z:=X^{(n)}\cup K$, $Y := X^{(n)}$, $W = U(K^{(n)}, X^{(n)})$ (which is open in $Y$ by (1)) and $L:= K$; note that $Y\cap L = K^{(n)} \subseteq  U(K^{(n)}, X^{(n)}) = W$. The lemma tells us that $W\cup L = U^{(n)} (K)$ is open in $Z = X^{(n)} \cup K$, as claimed.

Since $U^{(n)} (K)$ is open in $X^{(n)} \cup K$, it has the quotient topology inherited from the characteristic maps for this subcomplex. 
Furthermore, since $U(K)$ is open in $X$, it has the quotient topology inherited from \e{all} the characteristic maps for $X$, and since $U(K) = \bigcup_n U^{(n)} (K)$, it follows that the inclusions $U^{(n)} (K) \injects U(K)$ also induce a quotient map
\[\coprod_n U^{(n)} (K) \maps U(K).\]
 
To define our deformation retraction, we begin by
defining strong deformation retractions
\[H^{(n)} \co U^{(n)} (K) \cross [1/2^n, 1/2^{n-1}] \to U^{(n)} (K)\]
($n=1, 2, \ldots$)
of $U^{(n)} (K)$ onto $U^{(n-1)} (K)$.
We define $H^{(n)} (x, t) = x$ for each point $x\in U^{(n-1)} (K)$ and each $t\in [1/2^n, 1/2^{n-1}]$.
 For $x\in U^{(n)} (K)\setminus U^{(n-1)} (K)$, we write $x = \phi_x (z)$ 
 and define 
\[H^{(n)} (x, t) = \phi_x( (2 - 2^n t) z + (2^n t - 1)z/|z|).\]
Note that $z\neq 0$: since $x\notin U^{(n-1)} (K)$, we know $x\notin K$, so (4a) implies $x$ is not a center-point. 
Since $U^{(n)} (K)$ is the quotient of its inverse images under the attaching maps of $X^{(n)} \cup K$ and $[1/2^n, 1/2^{n-1}]$ is locally compact,  
 $U^{(n)} (K) \cross I$ is the quotient of these inverse images crossed with $[1/2^n, 1/2^{n-1}]$. Therefore to prove continuity of $H^{(n)}$, it suffices to check that if $\phi$ is the characteristic map of a cell in $X^{(n)} \cup K$, then $H^{(n)} \circ (\phi, \Id)$ is continuous, which is immediate from the construction.

Let
\[r^{(n)} := H^{(n)}_{1/2^{n-1}}\co U^{(n)} (K) \maps U^{(n-1)} (K)\]
be the retraction defined by $H^{(n)}$.
The desired deformation retraction of $U(K)$ onto $K$ is given by
\[H|_{U^{(n)} (K) \cross [1/2^k, 1/2^{k-1}]} (x, t) = H^{(k)} (r^{(k+1)} \circ \cdots \circ r^{(n)} (x), t)\]
for $n \geqs 1$, $1 \leqs k\leqs n$ (where, for $k=n$, the expression $r^{(k+1)} \circ \cdots \circ r^{(n)} (x)$ is interpreted to simply mean $x$)
and $H(x, t) = x$ if $x\in U^{(n)}(K)$ and $t \leqs 1/2^n$. This function is well-defined on all of $U(K) \cross [0,1]$ and is continuous on each set $U^{(n)} (K) \cross [0,1]$; since $U(K)\cross [0,1]$ is the quotient of these subspaces, it follows that $H$ itself is continuous. 

(4c): By (4b), there is a retraction $r\co U(K) \srm{\heq} K$. Since $r$ induces a bijection on $\pi_0$, for each path component $C \subseteq U(K)$ we have $C = r^{-1} (r(C))$, and  surjectivity of $r$ implies that $r(C)$ is a path component of $K$, hence open in $K$. Now $r^{-1} (r(C)) = C$ is open in $U(K)$, hence in $X$.
\end{proof}
  
 \begin{corollary}\label{CW-nerve} Let $(X, \V\co I \to 2^X)$ be a CW cover. Then $\U(\V) \co I\to 2^X$, $\U(\V)(i) = U(\V(i))$, is a locally open cover, and the map of covers 
 \[(\id_X, \id_I)\co (X, \V)\to (X, \U(\V))\]
 induces an order isomorphism
  \[\hat{\mcN} (\V) \srm{\isom} \hat{\mcN} (\U(\V)).\]  
 \end{corollary}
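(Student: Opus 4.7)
I will proceed in three stages, each a direct application of the properties already established in Proposition~\ref{CWnp} together with Proposition~\ref{funct}. First I verify that $\U(\V)$ is a locally open cover of $X$. Each $\U(\V)(i) = U(\V(i))$ is open in $X$ by (3c) and contains $\V(i)$ by (3a), so the sets $U(\V(i))$ cover $X$ (indeed each equals its own interior) because $\V$ already does. For the second condition in Definition~\ref{loc}, consider any $\F \in \mcN(\U(\V))$; property (2b) gives
\[ \bigcap_{i\in\F} U(\V(i)) \;=\; U\Bigl(\bigcap_{i\in\F} \V(i)\Bigr), \]
and since $\bigcap_{i\in\F} \V(i)$ is a subcomplex of $X$ (being an intersection of subcomplexes), part (3c) ensures that the path components of this neighborhood are all open in $X$. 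Hence $\bigcap \F$ is the topological coproduct of its path components.

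Second, I verify that $(\id_X, \id_I)\co (X,\V)\to (X,\U(\V))$ is an equivalence in the sense of Definition~\ref{eq}. It is a morphism in $\PC$ because $\id_X(\V(i)) = \V(i) \subseteq U(\V(i))$ by (3a), and $\id_I$ is obviously a bijection. For the pullback condition, suppose $\F \in \mcN(\U(\V))$; by (2b), $\bigcap_{i\in\F} U(\V(i)) = U\bigl(\bigcap_{i\in\F} \V(i)\bigr)$, and from the definition $U(K) = \{x : \underline{x} \in K\}$ we see that $U(K)$ is nonempty precisely when $K$ is. Thus $\bigcap_{i\in\F} \V(i) \neq \emptyset$, i.e.\ $\F \in \mcN(\V)$.

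Third, I invoke the last statement of Proposition~\ref{funct} to upgrade $\hat{\alpha}_*$ to an order isomorphism. The remaining hypothesis is that for each $\F\in \mcN(\V)$, the composite $\bigcap \F \to \bigcap_{i\in\F} U(\V(i))$ induces a bijection on $\pi_0$. But again by (2b) this composite is the inclusion $\bigcap \F \hookrightarrow U\bigl(\bigcap \F\bigr)$ of the subcomplex $\bigcap \F$ into its neighborhood, and by (3d) this inclusion is a (strong) deformation retract, hence a homotopy equivalence. This gives the required $\pi_0$--bijection, and Proposition~\ref{funct} finishes the proof. There is no real obstacle here; the work was front-loaded into Proposition~\ref{CWnp}, and the only point requiring a second's care is recognizing that the intersections of sets from $\V$ are themselves subcomplexes, so that properties (3a)--(3d) apply not just to the individual $\V(i)$ but to all the intersections entering the completed nerve.
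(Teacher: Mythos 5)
Your proposal is correct and follows essentially the same route as the paper's proof: local openness via Properties (2b) and (3c), the equivalence of covers via (2b) plus the fact that $U(K)$ is empty exactly when $K$ is, and the $\pi_0$--bijection for the maps (\ref{oi}) via the deformation retraction (3d), all fed into Proposition~\ref{funct}. Your explicit remark that intersections of subcomplexes are subcomplexes (so that (3a)--(3d) apply to them) is a detail the paper leaves implicit, but it is the same argument.
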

\begin{proof} We will use the various properties of $U$ established in Proposition~\ref{CWnp}. Properties  (3)(b) and (4)(c) 
imply that each finite intersection of spaces in $\U(\V)(I)$ is the disjoint union of its path components, so $\U(\V)$ is a locally open cover.

To show that $(\id_X, \id_I)$ induces an isomorphism between multinerves, it suffices, by Proposition~\ref{funct}, to show that it is an equivalence of covers and induces bijections on path components for each $\F\in \mcN(\V)$.
If $\{U(\V(i_1)), \ldots, U(\V(i_n))\} \in \mcN(\U(\V))$,  then by Property (3)(b) we have
\[U(\V(i_1))\cap \cdots \cap U(\V(i_n)) = U(\V(i_1) \cap \cdots \cap \V(i_n)),\]
and since this neighborhood is non-empty, $\V(i_1)\cap \cdots \cap \V(i_n)$ must be non-empty as well. So $(\id_X, \id_I)$ is an equivalence. Finally, Properties (3)(b) and (4)(b) imply that  the maps (\ref{oi}) from Proposition~\ref{funct} are homotopy equivalences, so they induce bijections on path components.
\end{proof}

\section{Multinerves of CW covers}\label{CW-sec}

We now combine the results of the previous two sections to prove Theorem~\ref{nerve-thm} for CW covers.

Let $(X, \V)$ be a CW cover, and let $\U = \U(\V)$ be the cover constructed in Section~\ref{CW-nbhds}. By Corollary~\ref{CW-nerve}, $\U$ is a locally open cover of $X$, and we have a map of covers $(X,\V)\to (X,\U)$.
This map induces a commutative diagram
\begin{equation}\label{CW-diag}
\begin{tikzcd}[column sep=scriptsize]
X \arrow{d}{\id_X} & {|\hC (\V)|} \arrow{l} \arrow{r}{q^\V} \arrow{d} & {|\hC^\delta (\V) |}   \arrow{d}&
	{|N_* \Simp \hC^\delta (\V) |} \arrow{r}  \ar{l} \arrow{d} & {|N_* (\hat{\mcN} (\V)) |} \arrow{r}{\isom} \ar{d}& {|\hat{\mcN}( \V) |} \arrow{d} \\
X & {|\hC (\U) |} \arrow{l} \arrow{r}{q^\U} & {|\hC^\delta (\U) |} &
	{|N_* \Simp \hC^\delta (\U) |} \ar{l}  \arrow{r} & {|N_* (\hat{\mcN} (\U)) |}  \arrow{r}{\isom} & {|\hat{\mcN} (\U)|.}
\end{tikzcd}
\end{equation}
We will show that  the vertical map
\[|\hC (\V)| \maps |\hC (\U)|\]
is a weak equivalence, while the other vertical maps are isomorphisms.

The map $\hC (\V) \to \hC (\U)$ is a level-wise weak equivalence by Properties (3)(b) and (4)(b) from Proposition~\ref{CWnp}.  As noted after the proof of Proposition~\ref{cech-prop}, these simplicial spaces are good, so  the induced map on geometric realizations is a weak equivalence~\cite[Proposition A.1]{Segal-cat-coh} (or by Lemma~\ref{ERW}). Applying the functor $\pi_0$ levelwise now shows that the map $\hC^\delta (\V) \to \hC^\delta (\U)$ is an isomorphism, and by naturality, the induced map between the nerves of the simplex categories is an isomorphism as well. 
Finally, Corollary~\ref{CW-nerve} states that $\hat{\mcN} (\V)\to \hat{\mcN} (\U)$ is an isomorphism, and by naturality the same is true for the map $N_* (\hat{\mcN} \V) \to N_* (\hat{\mcN} \U)$.
 
Commutativity of the Diagram (\ref{CW-diag}) implies that the horizontal maps $q^\U$ and $q^\V$ have the same connectivity, and that the other horizontal maps on the top row are weak equivalences if and only the corresponding maps on the bottom row are. To complete the proof it now suffices to  
check that $\U$ satisfies the hypotheses of Theorem~\ref{nerve-thm} for open covers.  For each $\F\in \mcN(\U)$, the components of $\bigcap_{i\in \F} U(\V(i))$ are all $(n-|\F| + 1)$--connected, because by Proposition~\ref{CWnp},
\[\bigcap_{i\in \F} U(\V(i)) = U\left(\bigcap_{i\in \F} \V(i)\right) \heq \bigcap_{i\in \F} \V(i),\]
and the components of $\bigcap_{i\in \F} \V(i)$ are all 
$(n-|\F| + 1)$--connected by assumption.

This completes the proof of Theorem~\ref{nerve-thm}. $\hfill \Box$

\s{.2}
We record the following statement, proven above, for later reference.

\begin{proposition}\label{CWcech-prop} For every CW cover $(X, \V)$, the natural map $|\hC (\V)| \to X$ is a weak equivalence.
\end{proposition}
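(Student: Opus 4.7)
The plan is to reduce to the already-established Proposition~\ref{cech-prop} (the Dugger--Isaksen version, for covers whose interiors form an open cover) by passing through the open thickening $\U(\V)$ constructed in Section~\ref{CW-nbhds}. The reason we cannot apply Proposition~\ref{cech-prop} to $\V$ directly is that the subcomplexes $\V(i)$ are closed in $X$ and their topological interiors typically do not cover $X$; the CW neighborhoods $U(\V(i))$ are designed precisely to repair this.

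First I would set $\U = \U(\V)$ with $\U(i) = U(\V(i))$. By Corollary~\ref{CW-nerve}, $\U$ is a locally open cover, and in particular each $\U(i)$ is an open subset of $X$ (Property (3c) of Proposition~\ref{CWnp}) that contains $\V(i)$, so $\bigcup \U(I) = X$ and the interiors of the $\U(i)$ form an open cover of $X$. Proposition~\ref{cech-prop} then applies and yields that $e_{\U}\co |\hC(\U)| \to X$ is a weak equivalence.

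Next I would compare $\hC(\V)$ and $\hC(\U)$ via the morphism of covers $(\id_X, \id_I)\co (X,\V)\to (X,\U)$. This induces a map of simplicial spaces $\hC(\V)\to \hC(\U)$ that fits into a commutative triangle with the augmentations $e_\V$ and $e_\U$ to $X$. At simplicial level $n$, this map is the coproduct over $(i_0,\ldots,i_n)\in I^{n+1}$ of the inclusions
\[\V(i_0)\cap\cdots\cap \V(i_n)\injects U(\V(i_0))\cap\cdots\cap U(\V(i_n)) = U\bigl(\V(i_0)\cap\cdots\cap\V(i_n)\bigr),\]
where the equality is Property (2b) of Proposition~\ref{CWnp}. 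By Property (3d), each such inclusion is the inclusion of a strong deformation retract, hence a homotopy equivalence, so $\hC(\V)\to \hC(\U)$ is a level-wise weak equivalence.

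Finally, both $\hC(\V)$ and $\hC(\U)$ are good simplicial spaces in the sense of Segal, since every degeneracy is a coproduct inclusion of the form $A\injects A\coprod B$ and therefore a closed cofibration. Hence by~\cite[Proposition A.1]{Segal-cat-coh} the level-wise weak equivalence induces a weak equivalence $|\hC(\V)| \heq |\hC(\U)|$ on realizations. Combined with the fact that $e_\U$ is a weak equivalence and the commutativity of the triangle with $X$, this gives the desired conclusion that $e\co |\hC(\V)|\to X$ is a weak equivalence. The substantive input is really the deformation-retract property (3d) of the CW neighborhoods, but that work has already been done in Section~\ref{CW-nbhds}, so here one is simply assembling the pieces.
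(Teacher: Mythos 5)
Your proof is correct and follows essentially the same route as the paper: the paper also passes to the open thickening $\U(\V)$, applies Proposition~\ref{cech-prop} to it, uses Properties (2b) and (3d) of Proposition~\ref{CWnp} to see that $\hC(\V)\to\hC(\U(\V))$ is a level-wise weak equivalence, and concludes via \cite[Proposition A.1]{Segal-cat-coh} and the commuting triangle over $X$.
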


\section{Partial nerves}\label{pnsec}

The nerve theorems considered so far produce combinatorial approximations to a space $X$, modeling its homotopy type through degree $n$, from a cover $\V$ on which one has control over  \e{all}  finite intersections $\bigcap \F$ with $|\F| \leqs  n$.
In this section, we show that even in cases where the nerve itself has the wrong homotopy type, one can sometimes still build good combinatorial models for the homotopy type of $X$ using an appropriate collection of intersections from $\V$. We will work with unindexed covers $\V \subseteq 2^X$ for the sake of simplicity.

\begin{definition} For a partial cover $\V$ of a space $X$, we denote the set of all $V\in \V$ containing $x\in X$ by $\V_x$, and we denote the set of all $V\in \V$ containing a subspace $Y\subseteq X$ by $\V_Y$ $($so $\V_x = \V_{\{x\}}$$)$.
\end{definition}

\begin{definition} Let $(X, \V)$ be a partial cover of a space $X$. For each $x\in X$, let $C_x = C^\V_x$ denote the path component of $x$ in $\bigcap \V_x$. We define the \e{completion} of the cover $\V$ to be the cover
\[\hat{\V} := \left\{ C_x \co x\in X\right\}.\]
If $(X, \V)$ is a CW cover and $e\subseteq X$ is an open cell, $C_e$ will
denote the path component of $e$ in $\bigcap \V_e$.
\end{definition}

One should expect in practice that for each $V\in \V$, we have $\pi_0 V \subseteq \hat{\V}$, since if $C\in \pi_0 V$ and $C\notin \hat{\V}$, then for each $x\in C$ there must be another set $W\in \V$, $W\neq V$, with $x\in W$. This means that replacing $V$ by $V\setminus C$ yields a new, simpler, cover of $X$.

Note that for a CW cover $(X, \V)$, each point $x\in X$ lies in a unique open cell $e$ of $X$, and $\V_x = \V_e$, so $C_x = C_e$.

Before explaining how the completion of a cover can be used to model the homotopy type of $X$, we note an analogue of Proposition~\ref{poset-vs-nerve} in the present setting.

\begin{proposition}\label{fpn} Let $(X, \V)$ be a partial cover of a space $X$, and let $\wt{\V} = \{\V_x \co x\in X\}$, considered as a subposet of $2^\V$. Assume that for each $x\in X$, the intersection $\bigcap \V_x$ is path connected $($so that $C_x = \bigcap \V_x$$)$.
Then there is a homotopy equivalence
\[c\co \wt{\V}^{\hspace{.02in}\op} \srm{\heq} \hat{\V}\]
defined by $c(\V_x) = \bigcap \V_x$.
\end{proposition}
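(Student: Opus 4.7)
The plan is to argue that under the path-connectedness hypothesis, $c$ is in fact a poset isomorphism, which yields a homeomorphism on order complexes and in particular the claimed homotopy equivalence. This is stronger than what the proposition asserts, and the work reduces to careful bookkeeping.

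First I would verify that $c$ is well-defined and lands in $\hat{\V}$: by hypothesis $c(\V_x) = \bigcap \V_x = C_x \in \hat{\V}$. For order-preservingness from $\wt{\V}^{\op}$ to $\hat{\V}$, suppose $\V_x \leq \V_y$ in $\wt{\V}^{\op}$, meaning $\V_y \subseteq \V_x$ inside $2^\V$; intersecting over a larger family gives a smaller set, so $\bigcap \V_x \subseteq \bigcap \V_y$, which is $c(\V_x) \leq c(\V_y)$ in $\hat{\V}$ (ordered by inclusion in $2^X$).

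Next I would produce an inverse by $C_x \mapsto \V_x$. The essential step is well-definedness: if $C_x = C_y$ as subsets of $X$, then by the hypothesis $\bigcap \V_x = \bigcap \V_y$, and since $x \in V$ for every $V \in \V_x$ we have $x \in \bigcap \V_x = \bigcap \V_y$; hence $x$ lies in each $V \in \V_y$, so $\V_y \subseteq \V_x$, and symmetry gives $\V_x = \V_y$. Surjectivity of $c$ is immediate from $\hat{\V} = \{C_z : z \in X\}$, and the same symmetric argument shows that the inverse is order-preserving (if $\bigcap \V_x \subseteq \bigcap \V_y$ then $\V_y \subseteq \V_x$, i.e., $\V_x \leq \V_y$ in $\wt{\V}^{\op}$). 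Thus $c$ is a poset isomorphism.

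The proof has no substantial obstacle; the main subtlety is orientational, keeping straight the opposite order on $\wt{\V}$ and the distinction between the ambient power sets $2^\V$ and $2^X$. As an alternative mirroring the proof of Proposition~\ref{poset-vs-nerve}, one could instead invoke Quillen's Fiber Theorem: the fiber $c^{-1}(\hat{\V}_{\geq C_y})$ equals $\{\V_x : \V_x \subseteq \V_y\}$, which has $\V_y$ as its minimum in the $\wt{\V}^{\op}$ order and is therefore conically contractible -- but this machinery is strictly unnecessary here.
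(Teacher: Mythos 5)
Your proof is correct, and its main route is genuinely different from (and stronger than) the paper's. The paper proves the statement by computing the Quillen fibers: it shows that $c^{-1}\bigl(\hat{\V}_{\geqs \bigcap \V_x}\bigr)$ has $\V_x$ as its minimum element in $\wt{\V}^{\,\op}$ and is therefore contractible, and then invokes the Poset Fiber Theorem --- exactly the alternative you sketch in your closing parenthetical, and resting on the same key observation you use, namely that $x\in\bigcap\V_x$ together with $\bigcap\V_x\subseteq\bigcap\V_y$ forces $\V_y\subseteq\V_x$. You instead package that observation as injectivity plus order-reflection of $c$, concluding that $c$ is a poset isomorphism, hence induces a homeomorphism of order complexes; this is more elementary (no fiber theorem) and yields a stronger conclusion than the stated homotopy equivalence. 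What the paper's phrasing buys is uniformity: it mirrors the proof of Proposition~\ref{poset-vs-nerve}, where the analogous comparison map is genuinely not an isomorphism and the fiber argument is actually needed, whereas here the path-connectedness hypothesis collapses the situation so that, as you note, the machinery is unnecessary. (One marginal caveat applying equally to both arguments: for a partial cover there may be points with $\V_x=\emptyset$, where $x\in\bigcap\V_x$ and the meaning of $C_x$ become degenerate; as in the paper, this edge case is implicitly set aside.)
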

\begin{proof} 
We claim that for each $x\in X$, the Quillen fiber $c^{-1} (\hat{\V}_{\geqs \bigcap \V_x})$ has $\V_{x}$ as its minimum element and hence is contractible. Indeed, if $\V_y$ is in this fiber, then $c(\V_y) = \bigcap \V_y \supseteq \bigcap \V_x$, and since $x\in \bigcap \V_x$ this implies $x\in \bigcap \V_y$ as well. It follows that 
 $\V_y \subseteq \V_{x}$, and hence $\V_{x} \leqs \V_y$ in $\wt{\V}^{\hspace{.02in}\op}$.
\end{proof}

In the case where each $\V_x$ is finite (for instance, if $\V$ is locally finite), Proposition~\ref{fpn} identifies the completion $\hat{\V}$, up to homotopy, with a subposet of the nerve $\mcN(\V)$, so we think of the completion $\hat{\V}$ as a \e{partial} nerve construction.
Note, however, that unlike the case for ordinary nerves, the poset $\wt{\V}$ need not be closed under passage to subsets (inside of $2^\V$), and hence need not be a subcomplex of the full simplicial complex on the set $\V$, even if each $\V_x$ is finite. Behavior of this sort appears in Example~\ref{ex} below, where a certain (path connected) 2-fold intersection is \e{not} part of $\wt{V}$, although larger simplices from the nerve \e{are} in $\wt{V}$.

The goal of this section is to prove the following result, along with  an analogous result for certain open covers (Proposition~\ref{open-pn}).

\begin{theorem}\label{CW-pn} Let $X$ be a CW complex, and let $\V$ be a CW cover of $X$ such that each space in $\hat{\V}$ is $n$--connected. Then there is a  
zig-zag
\begin{equation}\label{zz2}
X \stackrel{\heq}{\longleftarrow} \hocolim_{W\in \hat{\V}} W \srm{\pi} |\hat{\V}|
\end{equation}
connecting $X$ to the completion of $\V$, with $\pi$ an $(n+1)$--connected map.
\end{theorem}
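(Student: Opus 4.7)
The plan is to adapt the strategy behind Theorem~\ref{nerve-thm} for CW covers. First, I observe that $\hat{\V}$ is itself a CW cover of $X$: each $W = C_e \in \hat{\V}$ is a path component of a finite intersection of subcomplexes of $X$, hence a subcomplex of $X$, and $\hat{\V}$ covers $X$ since $x\in C_x$ for every $x \in X$. Under the hypothesis, every $W\in \hat{\V}$ is $n$-connected, because every such $W$ has the form $C_e$ for some open cell $e\subseteq X$. I model $\hocolim_{W\in\hat{\V}} W$ by the good simplicial space $B_*$ with
\[ B_k = \coprod_{W_0\leqs W_1\leqs\cdots\leqs W_k} W_0, \]
whose geometric realization computes the homotopy colimit. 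There are natural maps $\mu\co|B_*|\to X$, sending $(w,t)$ to $w\in W_0\subseteq X$, and $\pi\co|B_*|\to|N_*(\hat{\V})| = |\hat{\V}|$, which collapses each summand $W_0$ to a point.

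The connectivity of $\pi$ follows from Lemma~\ref{ERW}. At each simplicial level, $B_k\to N_k(\hat{\V})$ is a disjoint union of maps $W\to\ast$, each $(n+1)$-connected since $W\in\hat{\V}$ is $n$-connected. Lemma~\ref{ERW} then yields that $||\pi||$ is $(n+1)$-connected on thick realizations; since both $B_*$ and $N_*(\hat{\V})$ are good (degeneracies are inclusions of summands), this passes to $\pi$ on ordinary realizations.

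To prove $\mu$ is a weak equivalence, I apply Proposition~\ref{CWcech-prop} to the CW cover $\hat{\V}$, obtaining a weak equivalence $|\hC(\hat{\V})|\stackrel{\heq}{\to} X$, and then construct a natural map of simplicial spaces $B_*\to \hC_*(\hat{\V})$ sending the summand indexed by the nested chain $(W_0\leqs\cdots\leqs W_k)$ to the summand indexed by the tuple $(W_0,\ldots,W_k)$ via the identity on $W_0 = W_0\cap\cdots\cap W_k$. The key enabling property is that for each $x\in X$, the subposet $\{W\in\hat{\V}\co x\in W\}$ has minimum element $C_x$: if $x\in W$, then $C_x\subseteq W$, by path connectivity of $C_x$ and its characterization as the path component of $x$ in $\bigcap \V_x$. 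This gives both simplicial spaces contractible ``fibers'' over each point of $X$, and one can then construct a comparison between the associated internal categories in $\Top$ in the spirit of the slick proof of~\cite[Lemma~2.4]{DI-hyper}. The principal obstacle is that $x\mapsto C_x$ is not globally continuous on $X$---it is only constant on open cells---so the continuous natural transformation witnessing $|B_*|\heq|\hC(\hat{\V})|$ must be built with care, exploiting the CW structure of $X$ (perhaps by first refining to a cell-indexed intermediate simplicial space and then applying Lemma~\ref{cnt} to the pieces).
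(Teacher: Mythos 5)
Your setup (identifying $\hat{\V}$ as a CW cover whose members are exactly the $n$--connected spaces $C_e$, modeling the homotopy colimit by the simplicial replacement, and getting the $(n+1)$--connectivity of $\pi$ from Lemma~\ref{ERW} applied levelwise to $W\to *$) is correct and coincides with the paper's argument (Lemma~\ref{hoco}). The gap is in the other half: the weak equivalence $\hocolim_{W\in \hat{\V}} W \to X$. Your plan is to compare $B_*$ with $\hC_*(\hat{\V})$ and invoke Proposition~\ref{CWcech-prop}, but the comparison map you define is not a levelwise equivalence (the \v{C}ech complex has summands indexed by arbitrary tuples from $\hat{\V}$, e.g.\ incomparable pairs with non-empty intersection, which are not in the image of the chain-indexed summands), so Lemma~\ref{ERW} cannot be used there, and the entire weight falls on the proposed ``slick'' categorical homotopy in the style of \cite[Lemma~2.4]{DI-hyper}. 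That argument needs a continuous choice, over points of the \v{C}ech levels, of the minimal cover element $C_x$ containing $x$, and as you yourself note, $x\mapsto C_x$ is discontinuous (it jumps across cell boundaries). Saying the natural transformation ``must be built with care, perhaps by refining to a cell-indexed intermediate simplicial space'' names the obstruction without overcoming it; as written, the central weak equivalence is unproven.

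For comparison, the paper avoids any continuous selection of minimal elements by a thickening trick: it shows the completion of any cover is \emph{complete} (Lemma~\ref{complete}, which is exactly your observation that $x\in C$ implies $C_x\subseteq C$), shows that the open CW neighborhoods preserve completeness (Lemma~\ref{CW-complete}), and then applies \cite[Proposition 4.6(c)]{DI-hyper} to the complete \emph{open} cover $\hat{\U}=U(\hat{\V})$ to get $\hocolim_{U\in\hat{\U}} U \srm{\heq} X$; the map $\hocolim_{W\in\hat{\V}} W \to \hocolim_{U\in\hat{\U}} U$ is then a weak equivalence because $W\injects U(W)$ is a deformation retract (Proposition~\ref{CWnp}(3d)) and the indexing posets are isomorphic (Corollary~\ref{CW-nerve}). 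If you want to salvage your direct route, you would essentially have to reprove that Dugger--Isaksen statement in the CW setting, which is precisely the technical content your sketch defers.
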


The homotopy colimit in (\ref{zz2}) refers to the tautological diagram $\hat{\V} \to \Top$ sending $W\in \hat{\V}$ to itself, and mapping all morphisms in the poset $\hat{\V}$ to the corresponding inclusions of spaces.

Before giving the proof of Theorem~\ref{CW-pn}, we need some preliminary results on homotopy colimits and \e{complete covers}.
It will be convenient to use the following model for homotopy colimits.

\begin{definition} For a functor $F \co \C\to \Top$, the homotopy colimit $\hocolim_\C F$ is the geometric realization of the simplicial space 
\[[k]\goesto \coprod_{c_0 \to c_1 \cdots \to c_k \in N_k \C} F(c_0),\] 
$($the \e{simplicial replacement} of the diagram $F$$)$
with simplicial structure maps given by those of the nerve $N_* \C$ together with the map $F(c_0\to c_1)\co F(c_0)\to F(c_1)$. 
\end{definition}

We note that the simplicial space underlying a homotopy colimit is always good, since each degeneracy map has the form $A \injects A\coprod B$. As we will use a result about homotopy colimits from~\cite{DI-hyper}, it is important to note that the above model for the homotopy colimit always agrees with the other models used in that paper, as is proven in~\cite[Appendix]{DI-hyper}.

Natural transformations of functors induce simplicial maps between simplicial replacements, and hence maps between  homotopy colimits.
Note that the simplicial space underlying $\hocolim_\C *$ (where $*$ denotes the constant functor with value the 1-point space)  is exactly the nerve $N_* \C$ of the category $\C$, and this gives rise to a natural map $\hocolim_\C F \to |N_* \C|$.
The map $\pi$ in~(\ref{zz2}) is obtained in precisely this manner, and the following lemma shows it is $(n+1)$--connected.

\begin{lemma} \label{hoco} Let $F \co \C\to \Top$ be a functor, and assume that for each $C\in \C$, the space $F(C)$ is $n$--connected. Then the natural map 
\[\hocolim_\C F\to \hocolim_\C * = |N_*\C|\]
is $(n+1)$--connected.
\end{lemma}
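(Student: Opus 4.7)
The plan is to apply the Ebert--Randal-Williams Lemma (Lemma~\ref{ERW}) directly to the map of simplicial spaces induced by the natural transformation $F\Rightarrow *$ at the level of simplicial replacements. At simplicial level $k$, both simplicial spaces decompose as coproducts indexed by $N_k\C$, and the level-$k$ map is the disjoint union
\[\coprod_{c_0 \to \cdots \to c_k \in N_k\C} F(c_0) \longrightarrow \coprod_{c_0 \to \cdots \to c_k \in N_k\C} *\]
of the canonical maps $F(c_0)\to *$, one for each $k$-simplex of $\C$.

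The first step is to observe that each map $F(c_0)\to *$ is $(n+1)$-connected: since $F(c_0)$ is $n$-connected by hypothesis, $\pi_i F(c_0) = 0$ for $0\leqs i\leqs n$, so the map induces an isomorphism on $\pi_i$ for $0\leqs i\leqs n$ and is trivially surjective on $\pi_{n+1}$. A disjoint union of $(n+1)$-connected maps indexed by a common set is again $(n+1)$-connected, so each level-$k$ map is $(n+1)$-connected uniformly in $k$.

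The only bookkeeping subtlety -- which I would flag as the ``main obstacle,'' though it is a mild one -- is the connectivity index shift in Lemma~\ref{ERW}. To conclude that the realization map is $(n+1)$-connected, I apply Lemma~\ref{ERW} with its parameter taken to be $n+1$: then the required level-$k$ connectivity is $((n+1)-k)$-connected, which is implied by uniform $(n+1)$-connectivity for every $k\geqs 0$. The conclusion is that the induced map on thick realizations $\hocolim_\C F \to \hocolim_\C * = ||N_*\C||$ is $(n+1)$-connected.

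Finally, the natural map $||N_*\C|| \srm{\heq} |N_*\C|$ from the thick to the ordinary geometric realization is a weak equivalence by the same argument used earlier in the paper (as in the proof of Theorem~\ref{nerve-thm}): the degeneracies of $N_*\C$ are inclusions of summands of the form $A\injects A\coprod B$ and hence closed cofibrations, so $N_*\C$ is good in the sense of~\cite{Segal-cat-coh}. Composing gives the desired $(n+1)$-connected map $\hocolim_\C F \to |N_*\C|$.
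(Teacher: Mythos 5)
Your proof is correct and follows the same route as the paper: the level-$k$ map of simplicial replacements is a coproduct of maps $F(c_0)\to *$ with $F(c_0)$ $n$--connected, hence $(n+1)$--connected, so Lemma~\ref{ERW} (applied with parameter $n+1$) gives $(n+1)$--connectivity of the map of thick realizations, and the thick-to-thin comparison $||N_*\C||\to |N_*\C|$ is a weak equivalence by goodness. The index-shift bookkeeping you flag is exactly the point the paper leaves implicit, so there is no gap.
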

\begin{proof} In each level, the simplicial map inducing $\hocolim_\C F\to \hocolim_\C *$ is just the projection from a coproduct of $n$--connected spaces to a coproduct of points, and hence is an $(n+1)$--connected map. So the result follows from Lemma~\ref{ERW}.
\end{proof}

The proof of Theorem~\ref{CW-pn} will again use the CW neighborhoods considered in the previous section.  We need the following notion, which originated in work of McCord~\cite{McCord} and was used in the present setting by Dugger and Isaksen~\cite{DI-hyper}.

\begin{definition} A cover $(X, \V)$ is said to be \e{complete} if for every finite subset $\F \subseteq \V$, the intersection $\bigcap \F$ is a union of sets from $\V$.
\end{definition}

Note that in this definition, $\bigcap \F$ is allowed to be an \e{infinite} union of sets from $\V$.

\begin{lemma}\label{complete} For every cover $(X, \V)$, the completion $\hat{\V}$ is, in fact, complete.
\end{lemma}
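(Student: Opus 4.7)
The plan is to show that for any finite $\F \subseteq \hat{\V}$ and any point $y \in \bigcap \F$, the set $C_y$ is contained in $\bigcap \F$. This will immediately give $\bigcap \F = \bigcup_{y \in \bigcap \F} C_y$, exhibiting $\bigcap \F$ as a union of elements of $\hat{\V}$.

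First I would write $\F = \{C_{x_1}, \ldots, C_{x_k}\}$ for some points $x_1, \ldots, x_k \in X$, and fix $y \in \bigcap_{i=1}^k C_{x_i}$. The key observation is that $\V_{x_i} \subseteq \V_y$ for each $i$: indeed, $y \in C_{x_i} \subseteq \bigcap \V_{x_i}$, so every $V \in \V_{x_i}$ contains $y$, hence $V \in \V_y$. Taking intersections reverses the containment, yielding $\bigcap \V_y \subseteq \bigcap \V_{x_i}$.

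Next I would use this to deduce $C_y \subseteq C_{x_i}$. Since $C_y$ is the path component of $y$ in $\bigcap \V_y$ and the inclusion $\bigcap \V_y \hookrightarrow \bigcap \V_{x_i}$ sends $C_y$ into the (unique) path component of $y$ in $\bigcap \V_{x_i}$, which by definition is $C_{x_i}$, we get $C_y \subseteq C_{x_i}$. This holds for each $i$, so $C_y \subseteq \bigcap \F$. Since $y \in C_y$ for every $y \in \bigcap \F$, we conclude $\bigcap \F = \bigcup_{y \in \bigcap \F} C_y$, which is a union of elements of $\hat{\V}$ as required.

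The argument is essentially formal manipulation of the definitions, and I do not anticipate any real obstacle; the only subtle point is that path components are preserved in the correct direction under inclusions of intersections, which is handled by tracking what $\bigcap \V_y \subseteq \bigcap \V_{x_i}$ does to path components at the basepoint $y$.
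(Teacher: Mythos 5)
Your proof is correct and is essentially the paper's argument: the paper likewise reduces to showing $C_x \subseteq C$ whenever $x \in C \in \hat{\V}$, via the same observation that $x \in C_y \subseteq \bigcap \V_y$ forces $\V_y \subseteq \V_x$, hence $\bigcap \V_x \subseteq \bigcap \V_y$, and then compares path components at $x$. Your version just applies this pointwise to each element of the finite family $\F$ and assembles $\bigcap \F$ as the union of the resulting $C_y$'s, which is the same reduction stated slightly more explicitly.
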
 
\begin{proof} It suffices to show that for each $C\in \hat{\V}$ and each $x\in C$, we have $C_x\subseteq C$. So, say $x\in C \in \hat{\V}$. Then $C = C_y$ for some $y\in X$. Now $x\in C_y \subseteq \bigcap \V_y$ implies $\V_y \subseteq \V_x$, so 
$\bigcap \V_x \subseteq \bigcap \V_y$, and comparing the path components of $x$ in these two spaces yields $C_x \subseteq C$.
\end{proof}

\begin{definition} If $\V$ is a set of subcomplexes of the CW complex $X$, then we define $U(\V) = \{U(V)\co V\in \V\}$.
\end{definition}

\begin{lemma}\label{CW-complete}
Let $(X, \V)$ be a complete CW cover. Then $U(\V)$ is also complete.
\end{lemma}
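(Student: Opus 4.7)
The plan is to reduce the statement to a direct application of the distributive properties of the neighborhood functor $U$ established in Proposition~\ref{CWnp}. Let $\F' \subseteq U(\V)$ be a finite subset, and write $\F' = \{U(V_1), \ldots, U(V_n)\}$ for some $V_1, \ldots, V_n \in \V$ (note that all elements of $\V$ are subcomplexes of $X$ since $(X,\V)$ is a CW cover, so the neighborhoods $U(V_i)$ are defined).

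First, I would apply Property (2b) of Proposition~\ref{CWnp}, which says that $U$ distributes over intersections, to obtain
\[
\bigcap_{i=1}^n U(V_i) \;=\; U\!\left(\bigcap_{i=1}^n V_i\right).
\]
Next, since $(X,\V)$ is assumed complete, the intersection $\bigcap_{i=1}^n V_i$ can be written as $\bigcup \S$ for some subcollection $\S \subseteq \V$. Substituting and applying Property (2a) (distribution over unions) yields
\[
\bigcap_{i=1}^n U(V_i) \;=\; U\!\left(\bigcup_{W \in \S} W\right) \;=\; \bigcup_{W \in \S} U(W),
\]
which exhibits $\bigcap \F'$ as a union of sets from $U(\V)$, as required.

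The argument is essentially mechanical once the two distributivity properties are in hand; there is no genuine obstacle. The only thing to verify carefully is that each element $W \in \S$ is a subcomplex of $X$ so that $U(W)$ is a well-defined member of $U(\V)$, which is automatic because $\S \subseteq \V$ and $\V$ consists of subcomplexes by hypothesis.
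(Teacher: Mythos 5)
Your proof is correct and follows essentially the same route as the paper: write a finite intersection of neighborhoods as $U$ of the corresponding intersection via Property (2b) of Proposition~\ref{CWnp}, invoke completeness of $\V$, and then distribute $U$ over the (possibly infinite) union via Property (2a). No gaps; the argument matches the paper's.
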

\begin{proof}  Every finite subset of $U(\V)$ has the form $U(\F)$ for some finite subset $\F \subseteq \V$, and by completeness there exists $\W \subseteq \V$ such that $\bigcap \F = \bigcup \W$. By Property (3)(a) in Proposition~\ref{CWnp},
\[\bigcap U(\F) = U\left(\bigcap \F \right) = U\left(\bigcup \W \right) = \bigcup U(\W),\]
completing the proof.
\end{proof}

\begin{proof}[Proof of Theorem~$\ref{CW-pn}$] Let $\hat{\U} = U(\hat{\V})$, and
consider the commutative diagram
\begin{center}
\begin{tikzcd}
X  & \hocolim_{W\in \hat{\V}} W \arrow{d}{\heq} \arrow{r}{\pi_\V} \arrow{l}{} & {|\hat{\V}|} \arrow{d}{\isom} \\
&  \hocolim_{U\in \hat{\U}} U \ar{lu}  \arrow{r}{\pi_\U} &  {|\hat{\U}|}.
\end{tikzcd}
\end{center} 
The map between the homotopy colimits is induced by the poset isomorphism $U\co \hat{\V}\xmaps{\isom} \hat{\U}$ from
Corollary~\ref{CW-nerve} 
 together with the natural transformation $W\hookrightarrow U(W)$.
 This map is a weak equivalence by Lemma~\ref{hoco} and Property (4)(b) in Proposition~\ref{CWnp}. Our connectivity assumptions (together with Lemma~\ref{hoco}) imply that the maps $\pi_\U$ and  $\pi_\V$ are $(n+1)$--connected. 
Lemmas~\ref{complete} and~\ref{CW-complete} show that $\hat{\U}$ is complete, so the map
\[ \hocolim_{U\in \hat{\U}} U  \maps X\]
is a weak equivalence by~\cite[Proposition 4.6(c)]{DI-hyper}.
\end{proof}

In order to formulate the analogous result for open covers of arbitrary spaces, we will need to restrict to covers $\V$ in which the path components $C^\V_x \subseteq \bigcap \V_x$ are open.

\begin{definition} An open cover $\V$ of a space $X$ is called \e{thick} if for each $x\in X$, the path component $C^\V_x$ is open in $X$.
\end{definition}

\begin{example}\label{thickCW}
If $\V$ is an open cover of $X$ with each $\V_x$ finite $($for instance, if $\V$ is locally finite$)$ and $X$ is locally path connected, then $\V$ is automatically thick. 
\end{example}

\begin{proposition}\label{open-pn}  Let  $\V$ be a thick open cover of the space $X$, and assume that each space in $\hat{\V}$ is $n$--connected.  
Then there is a natural zig-zag
\[X \stackrel{\heq}{\longleftarrow} \hocolim_{W\in \hat{\V}} W \srm{\pi} |\hat{\V}|\]
connecting $X$ to the completion of $\V$, with $\pi$ an $(n+1)$--connected map.
\end{proposition}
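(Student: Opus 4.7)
The plan is to follow the strategy of Theorem~\ref{CW-pn} but to bypass the CW neighborhood construction entirely, since the cover $\V$ is already open and the thickness hypothesis is designed so that $\hat{\V}$ inherits an analogous good property directly. My goal is to produce a commutative diagram
\begin{center}
\begin{tikzcd}
X & \hocolim_{W \in \hat{\V}} W \arrow{l}[swap]{\heq} \arrow{r}{\pi} & |\hat{\V}|
\end{tikzcd}
\end{center}
with the left arrow a weak equivalence (via a Dugger--Isaksen style argument) and $\pi$ an $(n+1)$--connected map (via Lemma~\ref{hoco}).

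First I would verify that $\hat{\V}$ is itself an open cover of $X$. Coverage is automatic, since for each $x\in X$ we have $x\in C^\V_x\in \hat{\V}$. Openness is exactly the thickness hypothesis: each $C^\V_x$ is open in $X$ by definition of thick. Next, I invoke Lemma~\ref{complete} to conclude that the open cover $\hat{\V}$ is complete, and then apply~\cite[Proposition 4.6(c)]{DI-hyper}, which states that for a complete open cover the natural map from the homotopy colimit of the tautological diagram to the ambient space is a weak equivalence. This produces the left-hand map in the zig-zag.

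The right-hand map $\pi$ is the canonical map induced by collapsing each $W\in\hat{\V}$ to a point; in terms of simplicial replacements this is the map of simplicial spaces
\[\coprod_{W_0\subseteq\cdots\subseteq W_k} W_0 \longrightarrow \coprod_{W_0\subseteq\cdots\subseteq W_k} *, \]
whose thick realization (after identifying $\|N_*\hat{\V}\|\simeq|\hat{\V}|$) gives $\pi\co \hocolim_{W\in \hat{\V}} W\to |\hat{\V}|$. Since every $W\in \hat{\V}$ is $n$--connected by hypothesis, Lemma~\ref{hoco} (applied to the tautological functor $\hat{\V}\to \Top$) shows directly that $\pi$ is $(n+1)$--connected. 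Naturality of both maps with respect to morphisms of covers is immediate from naturality of the homotopy colimit and of the Dugger--Isaksen comparison map.

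The only real content of the proof, beyond assembling these pieces, is confirming that the hypotheses of \cite[Proposition 4.6(c)]{DI-hyper} are actually met by $\hat{\V}$; once thickness is in hand this is essentially free, so I do not expect a genuine obstacle. The role of thickness is precisely to upgrade the set-theoretic completion of $\V$ to an honest open cover, after which Lemma~\ref{complete}, Lemma~\ref{hoco}, and the Dugger--Isaksen theorem combine mechanically exactly as in the CW case.
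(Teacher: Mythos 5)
Your proposal is correct and follows essentially the same route as the paper's proof: thickness makes $\hat{\V}$ an open cover, Lemma~\ref{complete} gives completeness so that \cite[Proposition 4.6(c)]{DI-hyper} yields the weak equivalence $\hocolim_{W\in \hat{\V}} W \to X$, and Lemma~\ref{hoco} gives the $(n+1)$--connectivity of $\pi$. No gaps.
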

\begin{proof} As before, Lemma~\ref{hoco} implies that $\pi$ is $(n+1)$--connected, and Lemma~\ref{complete} tells us that $\hat{\V}$ is complete. Since $\V$ is thick, $\hat{\V}$ is an \e{open} cover, so~\cite[Proposition 4.6(c)]{DI-hyper} shows that
\[\hocolim_{W\in \hat{\V}} W \to X\] is a weak equivalence as well.
\end{proof}

\begin{example}\label{ex}
We now give an example of a CW cover $(X, \V)$ 
for which the nerve $\mcN(\V)$ is \e{not} homotopy equivalent $X$, 
 and yet the  completion $\hat{\V}$ is.
The key point in this example is that while all of the sets in $\V$ and the intersections $\V_x$, $x\in X$, are contractible, one of the pairwise intersections not contractible $($but is connected$)$.

Let $X = S^2 \cup D \heq S^2 \vee S^2$, where $S^2$ is the unit sphere in $\bbR^3$ and $D$ is the unit disk in the $x$-$y$ plane, with the CW decomposition shown in Figure~\ref{fig}. This CW structure has five (closed) 2-cells: the upper and lower hemispheres $D_+$ and $D_-$, and the three sectors $A$, $B$, $C$ of the disk $D$. Our CW cover is simply the set of closed $2$--cells:
\[\V = \{D_+, D_-, A, B, C\}.\]
Since the intersection $D_+\cap D_- = S^1$ is not contractible, the classical Nerve Theorem does not apply $($and Theorem~$\ref{nerve-thm}$ applies only with $n=1$$)$. In fact,
$\mcN(\V) \heq S^2$, as this complex simplicially collapses to the full subcomplex on the vertex set $\{A, B, C, D^+\}$ $($or, symmetrically,  $\{A, B, C, D^-\}$$)$, which is the boundary of a 3--simplex.
On the other hand, the completion $\hat{\V}$ is exactly $P\setminus\{E\}$, so each element of $\hat{\V}$ is contractible and hence $|\hat{\V}| \heq X$ by Theorem~$\ref{CW-pn}$.
\end{example}

\begin{figure}
\begin{center}
\begin{tikzpicture}[scale=3,tdplot_main_coords]
    \tdplotsetrotatedcoords{20}{80}{0}
    \draw [ball color=white,very thin,tdplot_rotated_coords] (0,0,0) circle (1) ;
    \draw [dashed, fill=blue, fill opacity=.1] (0,0,0) circle (1) ;
    \draw [fill=black] (0.,0.) circle (.7pt);
        \draw [fill=black] (-.26,-.97) circle (.7pt);
              \draw [fill=black] (.97,.26) circle (.7pt);
                 \draw [fill=black] (-.705,.705) circle (.7pt);
 
\draw[dashed] (0,0,0)
-- (-.26,-.97) node[anchor=north west, tdplot_rotated_coords]{};
\draw[dashed] (0,0,0)
-- (.97,.26) node[anchor=north west, tdplot_rotated_coords]{};
\draw[dashed] (0,0,0)
-- (-.705,.705) node[anchor=north west, tdplot_rotated_coords]{};
\draw (-1,-1, -.8) node[anchor=north west] {$D_-$};
\draw (1,1, .8) node[anchor=north west] {$D_+$};
\draw (-.1,-.5, 0) node[anchor=north west] {$A$};
\draw (.1,.4, .08) node[anchor=north west] {$B$};
\draw (-.1,.1, .17) node[anchor=north west] {$C$};
\end{tikzpicture}
\end{center}
\caption{A CW decomposition of the sphere union an equatorial disk}\label{fig}
\end{figure}
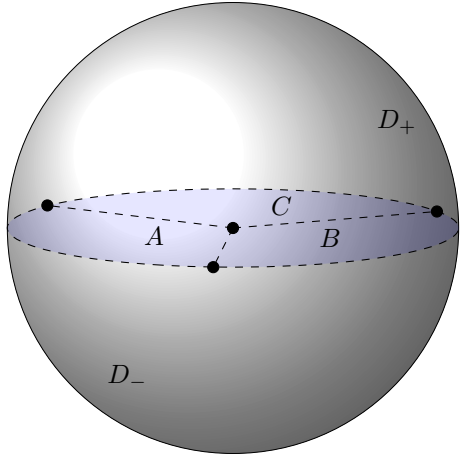

\section{Cutsets in partially ordered sets}\label{cutsets}

As an application of the simplicial case of Theorem~\ref{nerve-thm}, we can extend several results on crosscuts in posets.
The notion of a cutset generalizes that of a crosscut. Recall that given an element $x$ in a poset $P$, the \e{star} of $x$ is the subposet $\St (x)$ of $P$ consisting of all elements comparable to $x$. As is well-known, $\St (x)$ is always contractible, for instance because its geometric realization is a cone. 

\begin{definition} 
A \e{cutset} of a poset $P$ is a subset $X\subseteq P$ such that 
\[\V_X := \{\Delta (\St(x)) :\, x\in X\}\] 
is a $($simplicial$)$ cover of $\Delta P$.
\end{definition}

The definition of a cutset can also be phrased as saying that for each finite chain $\sigma \subseteq P$, there is some $x\in X$ such that $\sigma \cup \{x\}$ is still a chain (in which case $\sigma \in \Delta (\St(x))$).

%
%
%

\begin{definition} If $P$ is a poset, we write $\pi_0 (P)$ for the set of equivalence classes of $P$ under the equivalence relation generated by the order relation on $P$.  
\end{definition}

Note that there is a natural bijection $\pi_0 (P) \to \pi_0 |P|$ sending the equivalence class of $p$ to the path component of the corresponding vertex in $|P|$.

\begin{definition} \label{zz-n-ctd} We say that a zig-zag of maps 
\[X \longleftarrow Z_1 \maps \cdots \longleftarrow Z_{k+1} \maps Y\]
is $n$--connected if all left-ward pointing maps in the zig-zag induce isomorphisms on homotopy in degrees $0, \ldots, n$ and all rightward pointing maps are $n$--connected. 
\end{definition}

\begin{proposition}\label{cc-prop} Let $X$ be a cutset of a poset $P$. 

If $P$ is connected, then the fundamental group of $P$ is isomorphic to the fundamental group of the connected, 2-dimensional regular CW--complex $R(P,X)$ constructed as follows: the vertex set of $R(P,X)$ is $X$; the set of edges connecting $x, x'\in X$ $($$x\neq x'$$)$ is $\pi_0 (\St(x) \cap \St(x'))$, and for each 3--element set $\{x, x', x''\} \subseteq X$ and each path component $C \in \pi_0 (\St(x) \cap \St(x') \cap \St(x'') )$, there is a 2--cell in $R(P,X)$ attached $($by a homeomorphism$)$ to the triangle whose edges correspond to the components of $C$ in the pairwise intersections $\St(x) \cap \St(x') $, $ \St(x') \cap \St(x'')$, and $\St(x) \cap \St(x'')$. 

In general, there is a natural zig-zag connecting $|P|$ to the geometric realization of the poset 
\[ \Gamma (P, X) := \{C \subseteq P \co C \in \pi_0 (D) \textrm{ for some } D\in \mcN ( \V_X) \}.\]
Assume that for every $D\in \mcN (\V_X)$, each $C\in \pi_0 (D)$ is $(n-|D|+1)$--connected.
Then this zig-zag is $(n+1)$--connected.
\end{proposition}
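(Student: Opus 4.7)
The plan is to deduce the proposition by applying Theorem~\ref{nerve-thm} (and Theorem~\ref{hom-nerve-thm} in the (co)acyclic case) to the cover $\V_X$ of the simplicial complex $\Delta P$. First I would check that $\V_X$ is a cover of the CW complex $|\Delta P| \homeo |P|$ by subcomplexes, and that the intersection identity $\bigcap_i \Delta(\St(x_i)) = \Delta\bigl(\bigcap_i \St(x_i)\bigr)$ holds because chains in $\St(x_1) \cap \cdots \cap \St(x_k)$ are precisely the chains of $P$ lying in every $\St(x_i)$. Consequently, the path components of an intersection from $\V_X$ correspond to the path components of the subposet $\bigcap_i \St(x_i)$, and the poset $\Gamma(P, X)$ is identified with the poset $\overline{\V_X}$ of Proposition~\ref{poset-vs-nerve}, producing a natural homotopy equivalence $|\hat{\mcN}(\V_X)| \heq |\Gamma(P, X)|$.

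With these identifications in place, the general assertion follows immediately: Theorem~\ref{nerve-thm} produces an $(n+1)$-connected zig-zag from $|\Delta P|$ to $|\hat{\mcN}(\V_X)|$ under the stated connectivity hypotheses, and splicing on the natural equivalence $|\hat{\mcN}(\V_X)| \heq |\Gamma(P, X)|$ yields the desired zig-zag. The $(n+1)$-(co)acyclic statement is obtained the same way, using Theorem~\ref{hom-nerve-thm} in place of Theorem~\ref{nerve-thm}.

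For the first assertion, I would specialize to $n = 1$. The only hypothesis to verify is that each $\Delta(\St(x))$ has $1$-connected path components, which is automatic because $\St(x)$ is contractible by Lemma~\ref{st}. The key remaining step is to identify $R(P, X)$ with the $2$-skeleton of the regular CW structure on $|\hat{\mcN}(\V_X)|$ afforded by Proposition~\ref{CW}: non-degenerate simplices in $\hat{\mcN}_*(\V_X)$ correspond to pairs $(\F, C)$, with such a pair contributing a cell of dimension $|\F| - 1$. Because every star is connected, the $0$-cells are indexed by the elements of $X$, the $1$-cells by pairs $(\{x, x'\}, C)$ with $C \in \pi_0(\St(x) \cap \St(x'))$, and the $2$-cells by triples $(\{x, x', x''\}, C)$ with $C \in \pi_0(\St(x) \cap \St(x') \cap \St(x''))$, matching the description of $R(P, X)$; the attaching maps are homeomorphisms onto the expected triangles by regularity of the CW structure. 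Since $\pi_1$ of a CW complex agrees with $\pi_1$ of its $2$-skeleton, this gives $\pi_1(|P|) \cong \pi_1(|\hat{\mcN}(\V_X)|) \cong \pi_1(R(P, X))$, and connectedness of $R(P, X)$ follows from connectedness of $|P|$ via the bijection on $\pi_0$ produced by Theorem~\ref{nerve-thm}. The main bookkeeping hurdle is matching the combinatorial description of the attaching maps of $R(P, X)$ to those inherited from $|\hat{\mcN}(\V_X)|$, but this is essentially forced by regularity.
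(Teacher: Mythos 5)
Your proposal is correct and follows essentially the same route as the paper: apply Theorem~\ref{nerve-thm} (resp.\ Theorem~\ref{hom-nerve-thm}) to the CW cover of $|P|$ by the stars, use Lemma~\ref{st} to verify the $n=1$ hypotheses and Proposition~\ref{CW} to identify $R(P,X)$ with the $2$--skeleton of $|\hat{\mcN}(\V_X)|$, and use Proposition~\ref{poset-vs-nerve} together with the identification $\Gamma(P,X)\isom \overline{\V}_X$ for the general zig-zag. Your explicit check that $\bigcap_i \Delta(\St(x_i)) = \Delta\bigl(\bigcap_i \St(x_i)\bigr)$, and your observation that a pair $(\F,C)$ contributes a cell of dimension $|\F|-1$, are correct details that the paper leaves implicit.
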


When $\St(x) \cap \St(x')$ is path connected for each $x, x'\in X$ (and $X$ is the set minimal elements of $P$), this result reduces to the one in~\cite[Example 9]{Bjorner-nerves-fibers}. When all $C\in \Gamma(P, X)$ are contractible, it reduces to~\cite[Theorem 3.12]{Ottina}. 

\begin{proof} Let $\C_X = \{|\St (x)|\co x\in X\}$ be the CW cover of $|P|$ associated to $\V_X$.
Since all stars are contractible, the hypotheses of  Theorem~\ref{nerve-thm} are satisfied for $n=1$, so we have an isomorphism $\pi_1 P \isom \pi_1 \hat{\mcN} (\C_X)$ (the theorem also implies that $\hat{\mcN} (\C_X)$ is path connected, although this can also be seen directly from connectedness of $P$).
The complex $R(P,X)$ is just 2-skeleton of the canonical CW complex structure on the geometric realization of the simplicial set in Proposition~\ref{CW}, so $\pi_1 \hat{\mcN} (\C_X)\isom \pi_1 R (P, X)$.

In general, Theorem~\ref{nerve-thm} shows that the natural zig-zag (\ref{zz}) connecting $|P|$ to $|\hat{\mcN} (\C_X)|$
is $(n+1)$--connected,
and Proposition~\ref{poset-vs-nerve} gives a weak equivalence 
\[ \hat{\mcN} (\C_X)\srm{\heq} \overline{\V}_X = \left\{ K \subseteq |P| : \,  K\in \pi_0 \left( \bigcap \F \right)  \textrm{ for some } \F \in \mcN (\C_X)\right\}.\]
Finally, the map $C\goesto |C|$ is a natural isomorphism of posets $\Gamma(P, X) \srm{\isom}  \overline{\V}_X$.
\end{proof}

\section{Detection of $n$--connected maps and variations on Quillen's Poset Fiber Theorem}\label{coposec}

With some additional hypotheses, we can prove a version of the Poset Fiber Theorem that generalizes Bj\"orner's $n$--connected version~\cite{Bjorner-nerves-fibers} and closely mimics the $n$--connected version of the Nerve Theorem, relaxing the connectivity requirements on the fibers as one moves deeper into the target poset. The same strategy leads to a rather different method for detecting highly connected poset maps, by examining inverse images of chains rather than cones (Proposition~\ref{Achain}). In fact, we will deduce both of these results from a very general detection result for 
$n$--connected maps, generalizing~\cite[Theorem 6.7.9]{tomDieck-AT}.

We begin by formulating a general detection result for $n$--connected maps between CW complexes.

\begin{proposition}\label{CW-detection} Let $X$ and $Y$ be CW complexes and let $f\co X\to Y$ be a map. Let $\V\co I\to 2^Y$ be a CW cover of $Y$ such that $f^{-1} (\V(i))$ is a subcomplex of $X$ for each $i\in I$. If 
\begin{equation}\label{f} f\co f^{-1} (\V(i_1) \cap \cdots \cap \V(i_k)) \maps \V(i_1) \cap \cdots \cap \V(i_k)\end{equation}
is $(n-k+1)$--connected for each $(i_1, \ldots, i_k) \in I^k$, then $f$ is $n$--connected.
\end{proposition}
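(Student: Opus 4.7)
The plan is to mimic the CW case of Theorem~\ref{nerve-thm} by using the \v{C}ech complex to compare the two covers linked by $f$, rather than analyzing a single cover.

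First, I would form the pulled-back cover $f^{-1}\V\co I\to 2^X$ defined by $(f^{-1}\V)(i) := f^{-1}(\V(i))$. Each $f^{-1}(\V(i))$ is a subcomplex of $X$ by hypothesis, and $X = f^{-1}(Y) = \bigcup_{i\in I} f^{-1}(\V(i))$, so $(X, f^{-1}\V)$ is an object of $\CWC$. Moreover $(f, \id_I)$ is a morphism $(X, f^{-1}\V) \to (Y, \V)$ in $\CWC$, and functoriality of the \v{C}ech construction produces a commutative square
\[
\begin{tikzcd}
|\hC(f^{-1}\V)| \arrow{r}{e_X}\arrow{d} & X\arrow{d}{f}\\
|\hC(\V)| \arrow{r}{e_Y} & Y,
\end{tikzcd}
\]
whose horizontal augmentations are weak equivalences by Proposition~\ref{CWcech-prop}. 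It therefore suffices to show that the left-hand vertical map is $n$--connected.

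Next I would inspect the level-wise behavior of the induced map $\hC_*(f^{-1}\V)\to \hC_*(\V)$. At simplicial degree $p$, this map is the coproduct, indexed by $(i_0,\ldots,i_p)\in I^{p+1}$, of the restrictions
\[ f\co f^{-1}(\V(i_0)\cap \cdots \cap \V(i_p))\maps \V(i_0)\cap\cdots \cap \V(i_p), \]
which by the hypothesis, applied with $k = p+1$, is $(n-p)$--connected. Lemma~\ref{ERW} then yields that $\|\hC(f^{-1}\V)\| \to \|\hC(\V)\|$ is $n$--connected. Both \v{C}ech complexes are good simplicial spaces in the sense of~\cite{Segal-cat-coh} (each degeneracy has the form of an inclusion of a summand and is hence a closed cofibration), so by~\cite[Proposition A.1]{Segal-cat-coh} the natural projections from the thick to the ordinary realizations are weak equivalences, and consequently $|\hC(f^{-1}\V)|\to |\hC(\V)|$ is $n$--connected as well. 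Combining this with $e_X$ and $e_Y$ in the square above gives the conclusion.

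I do not anticipate any real obstacle; the argument is a direct adaptation of the CW portion of the proof of Theorem~\ref{nerve-thm} from a single cover to a comparison of two covers via a map. The only point requiring care is the indexing bookkeeping between the cardinality $k$ in the hypothesis and the simplicial degree $p=k-1$ at which Lemma~\ref{ERW} is applied.
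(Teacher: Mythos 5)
Your proposal is correct and follows essentially the same route as the paper's proof: pull back the cover along $f$ with the same index set, compare the two \v{C}ech complexes via the augmentations of Proposition~\ref{CWcech-prop}, and apply Lemma~\ref{ERW} to the level-wise maps, which are $(n-p)$--connected at simplicial degree $p$ by the hypothesis with $k=p+1$. Your explicit handling of the thick-versus-thin realization comparison is a point the paper leaves implicit, but the argument is the same.
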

\begin{proof} Let $f^{-1} \V\co I\to 2^X$ be the cover $i\goesto f^{-1} (\V(i))$. Then $f^{-1} (\V)$ is a CW cover of $X$, and we have a map of covers $(f, \id_I): (X, f^{-1} \V) \to (Y, \V)$. This gives rise to a commutative diagram
\begin{center}
\begin{tikzcd}
X  \arrow{r}{f}  & Y    \\
{|\hC (f^{-1} \V)|} \arrow{u}{\heq} \arrow{r} & {|\hC (\V)|} \arrow{u}{\heq},
\end{tikzcd}
\end{center} 
in which the vertical maps are weak equivalences by Proposition~\ref{CWcech-prop} and the bottom map is induced by $(f, \id_I)$. The map
\[\hC_m (f^{-1} \V) \maps \hC_m (\V)\]
is the coproduct, over all $(i_1, \ldots, i_{m+1}) \in I^{m+1}$, of all the maps (\ref{f}), and hence its connectivity is at least $n-(m+1)+1=n-m$.
The result now follows from Lemma~\ref{ERW}. 
\end{proof}
 
 \begin{remark}\label{indexed}
In the above proof, it was important to view $f^{-1} (\V)$ as a cover indexed by the same set as $\V$ itself, since otherwise when $f$ is non-surjective the spaces $\hC_m (f^{-1} \V)$ could have fewer path components than $\hC_m (\V)$: for instance, we could have $f^{-1} (\V(i)) = f^{-1} (\V(j))$ for some $i\neq j$.
\end{remark}

Before specializing to posets, we record a version of Proposition~\ref{CW-detection} for general topological spaces.

\begin{proposition}\label{detection} Let  $f\co X\to Y$ be a map of topological spaces, and let 
\[\V\co I\to 2^Y\] be a 
cover of $Y$ such the interiors of the sets in $\V(I)$ form an open cover of $Y$.
If the map
\[f\co f^{-1} (\V(i_1) \cap \cdots \cap \V(i_k)) \maps \V(i_1) \cap \cdots \cap \V(i_k)\]
is $(n-k+1)$--connected for each $(i_1, \ldots, i_k) \in I^k$, then $f$ is $n$--connected.
\end{proposition}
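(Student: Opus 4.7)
The plan is to follow the proof of Proposition~\ref{CW-detection} essentially verbatim, replacing the appeal to Proposition~\ref{CWcech-prop} with an appeal to Proposition~\ref{cech-prop}. First I would pull back the cover $\V$ along $f$, defining $f^{-1}\V\colon I\to 2^X$ by $i\mapsto f^{-1}(\V(i))$; crucially, I index this by the same set $I$ as $\V$, for the reason highlighted in Remark~\ref{indexed}. The preliminary step is to verify that the interiors of $f^{-1}\V(I)$ form an open cover of $X$. This is immediate from continuity of $f$: given $x\in X$, we have $f(x)\in \inter(\V(i))$ for some $i\in I$, hence $x\in f^{-1}(\inter(\V(i)))\subseteq \inter(f^{-1}(\V(i)))$, and since the interiors of $\V(I)$ cover $Y$, their preimages (hence also the interiors of $f^{-1}\V(I)$) cover $X$. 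Thus $f^{-1}\V$ satisfies the hypothesis of Proposition~\ref{cech-prop}.

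Second, observe that $(f, \id_I)$ is a morphism in $\PC$ from $(X, f^{-1}\V)$ to $(Y, \V)$, since trivially $f(f^{-1}(\V(i)))\subseteq \V(i)$. By naturality of the \v{C}ech construction this yields a commutative square
\[
\begin{tikzcd}
|\hC(f^{-1}\V)| \arrow{r} \arrow{d}[swap]{e_X} & |\hC(\V)| \arrow{d}{e_Y} \\
X \arrow{r}{f} & Y,
\end{tikzcd}
\]
in which $e_X$ and $e_Y$ are weak equivalences by Proposition~\ref{cech-prop}. So the problem reduces to showing that the top map is $n$--connected, and by Lemma~\ref{ERW} this in turn reduces to showing that for each $m\geqs 0$, the induced map $\hC_m(f^{-1}\V)\to \hC_m(\V)$ is $(n-m)$--connected.

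Third, I would unpack this level map: tracing through the definition~(\ref{cn}) and the fact that $f^{-1}(\V(i_0))\cap\cdots\cap f^{-1}(\V(i_m)) = f^{-1}(\V(i_0)\cap\cdots\cap \V(i_m))$, the map $\hC_m(f^{-1}\V)\to \hC_m(\V)$ is the disjoint union, over all tuples $(i_0,\ldots,i_m)\in I^{m+1}$, of the restrictions of $f$ to the preimage of $\V(i_0)\cap\cdots\cap \V(i_m)$. Each of these summand maps is $(n-m)$--connected by hypothesis (taking $k=m+1$, so $n-k+1=n-m$); when $n-m\leqs -1$ the condition is vacuous by Definition~\ref{n-ctd-def}. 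Finally, a coproduct of $r$--connected maps is $r$--connected, since $\pi_0$ of a coproduct is the coproduct of $\pi_0$'s, and higher homotopy groups are computed one path component at a time. I do not expect any real obstacle in this argument: all the heavy lifting is done by Proposition~\ref{cech-prop} and Lemma~\ref{ERW}, and the only point requiring care is the topological observation that the interiors of the pulled-back cover cover $X$, which is handled at the outset.
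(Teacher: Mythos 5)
Your proposal is correct and follows exactly the route the paper takes: the paper's proof of this proposition is a one-line reduction to the argument for Proposition~\ref{CW-detection}, substituting Proposition~\ref{cech-prop} for Proposition~\ref{CWcech-prop} and noting precisely your preliminary observation that the interiors of the sets $f^{-1}(\V(i))$ cover $X$. Your additional details (indexing the pulled-back cover by the same set $I$, identifying the level maps of \v{C}ech complexes as coproducts of the hypothesized $(n-m)$--connected maps, and invoking Lemma~\ref{ERW}) are exactly the content of that earlier proof.
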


The proof is essentially the same as above, using Proposition~\ref{cech-prop} and noting that since the interiors of the sets  $\V(i)$ cover $Y$, the interiors of the  $f^{-1} (\V(i))$ cover $X$.

We now consider some special cases of Proposition~\ref{CW-detection} for posets. When $f\co X\to Y$ is the geometric realization of a poset map (or a map of simplicial sets) the inverse image of each subcomplex of $Y$ is  a subcomplex of $X$, so this hypothesis in Proposition~\ref{CW-detection} holds automatically.

\begin{definition}\label{coh} Let $Q$ be a poset. We say that a subset $S\subseteq Q$ is \e{bounded above} if there exists an element $q\in Q$ such that $S\subseteq Q_{\leqs q}$. Recall that the \e{join} of $S$, if it exists, is the $($unique$)$ minimum element of the set $\{q\in Q :\, S\subseteq Q_{\leqs q}\}$. We denote the join of $S$ by $\vee S$.

Let $\M(Q) \subseteq Q$ denote the set of all minimal elements in $Q$.  
We say that $Q$ is \e{coherent} if
\begin{itemize}
\item $Q$ is bounded below $($that is, each $q\in Q$ lies above some minimal element $m\in \M(Q)$$)$, and
\item Every finite set  $\mu\subseteq \M(Q)$ that is bounded above has a join $\vee \mu\in Q$. 
\end{itemize}
\end{definition}

There are many examples of coherent posets: for instance, every finite lattice is coherent, and if $Q\subseteq 2^X$ is finite and closed under either unions or intersections, then $Q$ is coherent. There are many infinite examples as well, such as the coset poset of an arbitrary group or the poset of finite (proper, non-trivial) subgroups in an arbitrary group. Additionally, the Borsuk nerve of a cover is always coherent (though the multinerve need not be).

\begin{proposition}\label{copofiber} Let $f\co P\to Q$ be a poset map, with $Q$ is coherent.
If $f^{-1} (Q_{\geqs m_1 \vee \cdots \vee m_k})$ is $(n-k+1)$--connected  for every $m_1 \ldots, m_k \in \M (Q)$, then $f$ is 
$(n+1)$--connected.
\end{proposition}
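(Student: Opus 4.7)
The plan is to apply the CW detection result Proposition~\ref{CW-detection} to the realized map $|f|\co |P|\to |Q|$, using as cover of $|Q|$ the subcomplexes associated to the principal filters over minimal elements.

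First I would define the cover $\V\co \M(Q)\to 2^{|Q|}$ by $\V(m) = |Q_{\geqs m}|$. Each such set is a subcomplex of $|Q|$, and coherence of $Q$ (which includes boundedness below) ensures that every chain in $Q$ has a minimum lying above some $m\in \M(Q)$, so the $\V(m)$'s do cover $|Q|$. Since $|f|$ is the realization of an order-preserving map, each preimage $|f|^{-1}(\V(m)) = |f^{-1}(Q_{\geqs m})|$ is automatically a subcomplex of $|P|$, so the subcomplex hypothesis of Proposition~\ref{CW-detection} holds.

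Next I would analyze the finite intersections. For any $k$--tuple $(m_1,\dots,m_k)\in \M(Q)^k$,
\[\V(m_1)\cap\cdots\cap\V(m_k) \;=\; |Q_{\geqs m_1}\cap\cdots\cap Q_{\geqs m_k}|.\]
If $\{m_1,\dots,m_k\}$ is not bounded above this intersection is empty and the restricted map is vacuously highly connected. Otherwise, coherence supplies the join $\tilde m := m_1\vee\cdots\vee m_k \in \wt{\M}(Q)$, and the intersection equals $|Q_{\geqs \tilde m}|$, which is contractible because $\tilde m$ is its minimum.

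To invoke Proposition~\ref{CW-detection} with conclusion ``$|f|$ is $(n+1)$--connected'' I need each restricted map
\[|f|\co |f|^{-1}(\V(m_1)\cap\cdots\cap\V(m_k)) \maps \V(m_1)\cap\cdots\cap\V(m_k)\]
to be $(n-k+2)$--connected. Since the codomain is either empty or contractible, a routine check against Definition~\ref{n-ctd-def} reduces this to showing the domain is $(n-k+1)$--connected. Because $\M(Q)\subseteq \wt{\M}(Q)$ (each singleton is its own join), the hypothesis of the proposition applied to $(m_1,\dots,m_k)$ supplies this connectivity, and Proposition~\ref{CW-detection} finishes the proof. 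Given that Proposition~\ref{CW-detection} is already established, the only real task is the bookkeeping with connectivity indices, together with the observation that contractibility of the intersections allows us to trade the ``source connectivity'' hypothesis furnished by the present proposition for the ``map connectivity'' hypothesis required by Proposition~\ref{CW-detection}; I do not anticipate any serious obstacle beyond this.
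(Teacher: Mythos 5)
Your proposal is correct and follows essentially the same route as the paper: cover $|Q|$ by the subcomplexes $|Q_{\geqs m}|$ for $m\in \M(Q)$, observe that coherence makes each non-empty finite intersection a contractible cone $|Q_{\geqs m_1\vee\cdots\vee m_k}|$, and trade the $(n-k+1)$--connectivity of the preimages for the $(n-k+2)$--connectivity of the restricted maps required by Proposition~\ref{CW-detection}. Your explicit treatment of the unbounded (empty intersection) case is a small point the paper's proof leaves implicit, but it is the same argument.
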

\begin{proof} For every list of elements $q_1, \ldots, q_k\in \M(Q)$, the intersection
\[Q_{\geqs q_1} \cap \cdots \cap Q_{\geqs q_k} = Q_{\geqs q_1 \vee \cdots \vee q_k}\]
is contractible. Hence if  $f^{-1} (Q_{\geqs m_1 \vee \cdots \vee m_k})$ is   $(n-k+1)$--connected, the map 
\[f\co f^{-1} (Q_{\geqs m_1 \vee \cdots \vee m_k})\to Q_{\geqs q_1 \vee \cdots \vee q_k}\] 
is $(n-k+2)$--connected. Applying Proposition~\ref{CW-detection} to the 
cover of $|Q|$ given by $\M(Q) \to 2^{|Q|}$, $q\goesto |Q_{\geqs q}|$ shows that $f$ is $(n+1)$--connected.  
\end{proof}

From the perspective above, the Poset Fiber Theorem arises from studying covers of posets by cones. If $P$ is a poset in which every chain is finite, then the above arguments can also be run for the cover of $P$ by \e{maximal chains}. It turns out that intersections of maximal chains have an intrinsic description. To explain this, we will need some terminology and notation.

\begin{definition} Let $P$ be a poset and let $\sim$ be the equivalence relation on $P$ generated by its order relation. For $S\subseteq P$ and   $p\in P$, we write $p\sim S$ to mean that $p\sim s$ for all $s\in S$.

The \e{neighborhood in} $P$ of a subset $S\subseteq P$ is defined by 
\[N_P (S) = \{p\in P :\, p\sim S\},\] and the \e{core} of $S$ is defined by 
\[C(S) = N_S (S).\] 
We say that a subset  $S\subseteq P$ is \e{essential} if $S = C(N_P (S))$.
\end{definition}

From here on all neighborhoods will be taken with respect to the ambient poset $P$, so we drop $P$ from the notation.
We record some consequences of the definitions. 

\begin{lemma}\label{coneighbo} Let $P$ be a poset, and let $S$ and $T$ be subsets of $P$, and let $\epsilon \subseteq P$ be a chain. Then the following statements hold:
\begin{enumerate}
\item $\epsilon \subseteq C(N(\epsilon))$.
\item $C(N(S)) \subseteq N(N(S))$
\item $S\subseteq T \implies N(T) \subseteq N(S)$.
\item $S\subseteq T  \implies C(N(S)) \cap N(T)  \subseteq C(N(T))$.
\end{enumerate}
\end{lemma}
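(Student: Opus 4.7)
The proof of each clause proceeds by directly unpacking the definitions of $N(\cdot)$ and $C(\cdot)$: $p \in N(S)$ means $p \sim s$ for every $s \in S$, and $p \in C(T) = N_T(T)$ means both $p \in T$ and $p \sim t$ for every $t \in T$. Everything will follow from these unpackings together with symmetry/transitivity of the equivalence relation $\sim$.

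For (1), I would take $s \in S$ and first argue that $s \in N(S)$: for each $t \in S$, the relation $s \sim t$ must hold (this is the only place where one implicitly uses that the interesting case is when $S$ lies in a single $\sim$-class, since otherwise $N(S) = \emptyset$ and both sides are empty). Granted $s \in N(S)$, to conclude $s \in C(N(S))$ I need $s \sim q$ for every $q \in N(S)$; but such a $q$ satisfies $q \sim s$ by definition of $N(S)$, so $s \sim q$ by symmetry. Part (2) is then immediate: any $p \in C(N(S))$ satisfies $p \sim q$ for all $q \in N(S)$, which is exactly the defining condition for $p \in N(N(S))$.

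Part (3) is the contravariance of $N$ in its argument: if $p \sim t$ for every $t \in T$ and $S \subseteq T$, then \emph{a fortiori} $p \sim s$ for every $s \in S$, so $p \in N(S)$. For (4), I would take $p \in C(N(S)) \cap N(T)$. Membership in $N(T)$ is handed to me; it remains to check $p \sim q$ for each $q \in N(T)$. By (3) we have $N(T) \subseteq N(S)$, so any such $q$ also lies in $N(S)$, and now $p \in C(N(S))$ delivers $p \sim q$. Hence $p \in C(N(T))$, completing (4).

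There is essentially no obstacle beyond bookkeeping: the main thing to keep straight is which relation ``$\sim S$'' is being quantified over in each step, and to notice that (4) is really just (3) combined with the defining property of $C(N(S))$. No geometric or topological input is needed; the lemma is purely formal.
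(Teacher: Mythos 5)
Your treatments of (2), (3) and (4) are correct, and they are exactly the routine unpacking the paper has in mind (the paper records this lemma without proof, as a ``simple consequence'' of the definitions): (2) is just the observation that $C(T)\subseteq N(T)$ for any $T$, applied to $T=N(S)$; (3) is contravariance of the universal quantifier; and (4) is, as you say, (3) combined with the defining property of $C(N(S))$.

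The gap is in (1), specifically in your parenthetical patch. For $s\in S$ to lie in $N(S)$ you need $s\sim t$ for \emph{every} $t\in S$, which holds only when the elements of $S$ are pairwise $\sim$-equivalent; it is not automatic. Your fallback --- ``otherwise $N(S)=\emptyset$ and both sides are empty'' --- is false: the left-hand side of (1) is $S$ itself, which is nonempty in that situation, so the inclusion $S\subseteq C(N(S))=C(\emptyset)=\emptyset$ genuinely fails. Concretely, take $P=\{a,b\}$ a two-element antichain lying in two distinct $\sim$-classes and $S=\{a,b\}$: then $N(S)=\emptyset$ while $S\neq\emptyset$, so (1) is violated. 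Hence (1) requires the hypothesis that $S$ lies in a single $\sim$-class (for instance, that $S$ is a chain, or is empty). This is harmless for the paper's purposes, since (1) is only invoked for $S=\epsilon_1\cap\epsilon_2$ with $\epsilon_1,\epsilon_2$ chains, whose elements are pairwise comparable and hence pairwise equivalent; under that hypothesis your main argument (symmetry of $\sim$ giving $s\sim q$ for all $q\in N(S)$) goes through verbatim. You should state that hypothesis explicitly rather than claim the degenerate case is vacuous.
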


\begin{lemma}\label{essint} If $\epsilon_1$ and $\epsilon_2$ are essential chains in $P$, then so is $\epsilon_1 \cap \epsilon_2$.
\end{lemma}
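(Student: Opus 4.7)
My plan is to verify the nontrivial inclusion $C(N(\epsilon_1\cap\epsilon_2))\subseteq \epsilon_1\cap\epsilon_2$, since Lemma~\ref{coneighbo}(1) supplies the reverse inclusion immediately. First I note that $\epsilon:=\epsilon_1\cap\epsilon_2$ is a chain, being a subset of $\epsilon_1$, so the statement is at least well-posed.

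The main reduction will be Lemma~\ref{coneighbo}(4) applied to the pair $\epsilon\subseteq\epsilon_i$ for $i=1,2$: this inclusion yields $C(N(\epsilon))\cap N(\epsilon_i)\subseteq C(N(\epsilon_i))=\epsilon_i$, where the last equality uses essentialness of $\epsilon_i$. Thus, given $p\in C(N(\epsilon))$, it will suffice to prove the two containments $p\in N(\epsilon_1)$ and $p\in N(\epsilon_2)$, for then $p$ lies in both $\epsilon_1$ and $\epsilon_2$, and hence in $\epsilon$.

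The key observation I want to exploit is that each $\epsilon_i$ is already contained in $N(\epsilon)$: given $q\in\epsilon_i$, any $s\in\epsilon\subseteq\epsilon_i$ lies in the same chain as $q$ and is therefore comparable to it in $P$, so $q\sim s$ for every $s\in\epsilon$, i.e., $q\in N(\epsilon)$. Now since $p\in C(N(\epsilon)) = N_{N(\epsilon)}(N(\epsilon))$, for every $q\in\epsilon_i\subseteq N(\epsilon)$ there is a zig-zag of comparabilities inside $N(\epsilon)$ connecting $p$ to $q$; this zig-zag a fortiori witnesses $p\sim q$ in $P$, so letting $q$ range over $\epsilon_i$ gives $p\in N(\epsilon_i)$ as needed.

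The argument is essentially formal once one accepts the chain-theoretic observation $\epsilon_i\subseteq N(\epsilon)$, so I do not expect a genuine obstacle; the main subtlety is simply to keep track of the distinction between the ambient equivalence $\sim$ on $P$ and the equivalence on $N(\epsilon)$ used in the definition of the core. The edge case $\epsilon=\emptyset$ is handled uniformly by the same argument, which, applied to a hypothetical $p\in C(P)$, would force $p\in\epsilon_1\cap\epsilon_2=\emptyset$ and hence $C(P)=\emptyset$, exactly what essentialness of $\emptyset$ demands.
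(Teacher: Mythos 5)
Your proof is correct and takes essentially the same route as the paper: reduce to showing $C(N(\epsilon_1\cap\epsilon_2))\subseteq\epsilon_i$ via Lemma~\ref{coneighbo}(1) and (4), using the key observation that $\epsilon_i\subseteq N(\epsilon_1\cap\epsilon_2)$ since $\epsilon_i$ is a chain containing $\epsilon_1\cap\epsilon_2$. The only difference is cosmetic: where the paper cites Lemma~\ref{coneighbo}(2) and (3) to conclude $C(N(\epsilon_1\cap\epsilon_2))\subseteq N(N(\epsilon_1\cap\epsilon_2))\subseteq N(\epsilon_i)$, you prove that containment directly by hand, which is just an inlined verification of those two properties.
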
 

\begin{proof} We will use the properties listed in Lemma~\ref{coneighbo}. In light of Property (1), it suffices to show that for $i=1, 2$, we have
\[C(N(\epsilon_1 \cap \epsilon_2)) \subseteq \epsilon_i = C(N(\epsilon_i)).\]
By  Property (4), we have
\[ C(N(\epsilon_1 \cap \epsilon_2)) \cap N(\epsilon_i)  \subseteq C(N(\epsilon_i)),\]
so to complete the proof it suffices to show that $C(N(\epsilon_1 \cap \epsilon_2)) \subseteq N(\epsilon_i)$. 
Since $\epsilon_i$ is a chain, we have $\epsilon_i \subseteq N(\epsilon_1\cap \epsilon_2)$, and  now Properties (2) and (3) yield
\[C(N(\epsilon_1\cap \epsilon_2)) \subseteq N(N(\epsilon_1\cap \epsilon_2)) \subseteq N(\epsilon_i),\]
 completing the proof.
\end{proof}

\begin{definition} We say a poset $P$ is \e{locally finite-dimensional} if every chain in $P$ is finite.
\end{definition}

\begin{proposition} Let $P$ be a locally finite-dimensional poset. Then a chain $\epsilon \subseteq P$ is essential if and only if it is the intersection of some finite collection of maximal chains.
\end{proposition}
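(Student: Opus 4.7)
The plan is to verify both directions directly from the definition $\epsilon = C(N(\epsilon))$, using the characterization of $N(\epsilon)$ as the set of elements of $P$ comparable to every element of $\epsilon$, so that $C(N(\epsilon))$ is the set of those $p\in N(\epsilon)$ comparable to every element of $N(\epsilon)$.

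For the ``if'' direction, suppose $\epsilon = m_1 \cap \cdots \cap m_k$ with each $m_i$ a maximal chain. The inclusion $\epsilon \subseteq C(N(\epsilon))$ is Lemma~\ref{coneighbo}(1). For the reverse, I would fix $p\in C(N(\epsilon))$ and check that $p\in m_i$ for each $i$: since $m_i$ is a chain containing $\epsilon$, every element of $m_i$ is comparable to every element of $\epsilon$, so $m_i \subseteq N(\epsilon)$; hence $p$ is comparable to every element of $m_i$, so $m_i \cup \{p\}$ is a chain, and maximality forces $p\in m_i$.

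For the ``only if'' direction, I would first use local finite-dimensionality to extend $\epsilon$ to a maximal chain $m_0 \supseteq \epsilon$; this $m_0$ is automatically finite. The key construction is, for each $a\in m_0 \setminus \epsilon$, a maximal chain $m_a$ containing $\epsilon$ but avoiding $a$. Since $a\in m_0 \subseteq N(\epsilon)$ but $a \notin \epsilon = C(N(\epsilon))$, essentiality supplies some $b_a \in N(\epsilon)$ incomparable to $a$; the chain $\epsilon \cup \{b_a\}$ then extends to a maximal chain $m_a$, and incomparability of $a$ and $b_a$ forces $a \notin m_a$. The finite intersection $m_0 \cap \bigcap_{a\in m_0 \setminus \epsilon} m_a$ clearly contains $\epsilon$, and conversely any element of this intersection lies in $m_0$ while avoiding each $a\in m_0 \setminus \epsilon$, hence lies in $\epsilon$.

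The main obstacle is precisely the finiteness claim: without local finite-dimensionality the natural argument would realize $\epsilon$ only as the intersection of \emph{all} maximal chains containing it, which can be an infinite family. Local finite-dimensionality is used in two places: it guarantees that $\epsilon$ admits a maximal extension $m_0$ at all (chains terminate in both directions), and that $m_0 \setminus \epsilon$ is finite, so that the family $\{m_a\}_{a\in m_0\setminus \epsilon}$ is finite.
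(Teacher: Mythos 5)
Your proposal is correct; every step checks out (in particular, your reading of $N(\epsilon)$ as the set of elements comparable to all of $\epsilon$ is exactly how the paper uses the relation $\sim$ in this section). It is, however, organized differently from the paper's proof. For the ``if'' direction, the paper does not argue directly: it first observes that maximal chains $\mu$ are essential because $N(\mu)=\mu$, and then invokes Lemma~\ref{essint} (essential chains are closed under pairwise intersection) together with induction; your direct argument --- $m_i\subseteq N(\epsilon)$, so any $p\in C(N(\epsilon))$ is comparable to all of $m_i$ and hence lies in $m_i$ by maximality --- bypasses Lemma~\ref{essint} entirely and is a nice self-contained alternative. For the ``only if'' direction, the paper proves the cleaner intermediate statement that $\epsilon=\bigcap\mu(\epsilon)$, the intersection of \emph{all} maximal chains containing $\epsilon$ (the reverse inclusion uses essentiality plus a Zorn extension of $\epsilon\cup\{n\}$ for each $n\in N(\epsilon)$), and then uses finiteness of maximal chains to replace $\mu(\epsilon)$ by a finite subfamily; you instead fold the finiteness reduction and the essentiality argument into one construction, exhibiting the explicit finite family $\{m_0\}\cup\{m_a : a\in m_0\setminus\epsilon\}$, where $b_a\in N(\epsilon)$ incomparable to $a$ is supplied by essentiality. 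The underlying mechanism (maximal chains through $\epsilon$ lie in $N(\epsilon)$; essentiality excludes each extraneous comparable element from some such chain) is the same, so the two proofs are close cousins, but yours is more constructive, giving a bound of $1+|m_0\setminus\epsilon|$ on the number of maximal chains needed, while the paper's isolates the statement $\epsilon=\bigcap\mu(\epsilon)$ and reuses Lemma~\ref{essint}. One small imprecision: the existence of a maximal chain containing a given chain follows from Zorn's Lemma in any poset (this is what the paper uses), so local finite-dimensionality is genuinely needed only where you use it second --- to make $m_0$, and hence $m_0\setminus\epsilon$, finite.
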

\begin{proof} First, note that all maximal chains are essential, since if $\mu\subseteq P$ is a maximal chain, then $N(\mu) = \mu$ and hence $C(N(\mu)) = C(\mu) = \mu$. It follows from Lemma~\ref{essint} and induction that every finite intersection of maximal chains is essential.

Conversely, say $\epsilon \subseteq P$ is an essential chain. Let $\mu(\epsilon)$ be the set of all maximal chains containing $\epsilon$. Since each element of $\mu(\epsilon)$ is a finite set, $\bigcap \mu(\epsilon)$ is equal to $\bigcap \mu'(\epsilon)$ for some finite subset $\mu'(\epsilon)\subseteq \mu(\epsilon)$. Hence it suffices to show that $\bigcap \mu(\epsilon) = \epsilon$.
We have $\epsilon \subseteq \bigcap \mu(\epsilon)$ by definition. To prove the reverse inclusion, consider an element $p\in \bigcap \mu(\epsilon)$. We need to prove that $p\in \epsilon = C(N(\epsilon))$. Certainly $p\in N(\epsilon)$, so it suffices to show that $p\in N(N(\epsilon))$.
Say $n \in N(\epsilon)$. We want to show that $p\sim n$. Since $n\sim \epsilon$, we know that $\{n\} \cup \epsilon$ is a chain, and applying Zorn's Lemma to the poset $\{D \in \Delta P :\, \{n\} \cup \epsilon \subseteq D\}$ shows that there exists a maximal chain $M$ containing $\{n\} \cup \epsilon$. Now $M \in \mu(\epsilon)$, so $p\in M$ and hence $p\sim n$.
\end{proof}

The following theorem is proven in the same manner as Proposition~\ref{copofiber}. 

\begin{proposition}\label{Achain}
Let $f\co P\to Q$ be a map of posets, with $Q$ locally finite-dimensional. If $f^{-1} (m_1 \cap \cdots \cap m_k)$ is $(n-k+1)$--connected  for all maximal chains $m_1, \ldots, m_k \subseteq Q$, then $f$ is $(n+1)$--connected.
In particular, if $f^{-1} (\epsilon)$ is contractible for each essential chain $\epsilon \subseteq Q$, then $f$ is a homotopy equivalence.
\end{proposition}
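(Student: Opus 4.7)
The approach is to apply Proposition~\ref{CW-detection} to the realization $|f|\co |P|\to |Q|$ together with the CW cover $\V\co I\to 2^{|Q|}$, where $I$ is the set of maximal chains of $Q$ and $\V(m) = |m|$ (viewed as a subcomplex of $|Q|$ via the identification $|Q|\isom |\Delta Q|$). Since every chain in $Q$ extends to a maximal one by Zorn's Lemma, these subcomplexes cover $|Q|$, so $\V$ is indeed a CW cover.

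Next, I would identify the relevant intersections and preimages combinatorially. From $\Delta m_1\cap \cdots\cap \Delta m_k = \Delta(m_1\cap \cdots\cap m_k)$, we obtain
\[\V(m_1)\cap \cdots\cap \V(m_k) = |m_1\cap \cdots\cap m_k|,\]
which is the realization of a chain in $Q$, hence either empty or a contractible simplex. For the preimage: an open cell of $|P|$ corresponding to a strict chain $c\subseteq P$ maps into $|m|$ precisely when $f(c)\subseteq m$, equivalently when $c\subseteq f^{-1}(m)$, whence
\[|f|^{-1}(\V(m_1)\cap \cdots\cap \V(m_k)) = |f^{-1}(m_1\cap \cdots\cap m_k)|,\]
which is automatically a subcomplex of $|P|$. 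This preimage identity is the main technical point of the argument, but it amounts to little more than tracking which cells map where.

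Assembling these ingredients, the hypothesis gives that $|f^{-1}(m_1\cap \cdots\cap m_k)|$ is $(n-k+1)$--connected while its image in $|Q|$ is contractible, so the restriction of $|f|$ between them is $(n-k+2)$--connected (any map from an $(n-k+1)$--connected space into a contractible one is $(n-k+2)$--connected). Applying Proposition~\ref{CW-detection} with $n$ there replaced by $n+1$ (so that the required intersection connectivity becomes $(n+1)-k+1 = n-k+2$) then yields that $|f|$ is $(n+1)$--connected. For the ``in particular'' clause, the preceding proposition identifies essential chains with finite intersections of maximal chains, so the contractibility hypothesis provides $(n-k+1)$--connectivity of each such preimage for \emph{every} $n$; hence $|f|$ is a weak equivalence, and in fact a homotopy equivalence since $|P|$ and $|Q|$ are CW complexes.
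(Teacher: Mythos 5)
Your proposal is correct and follows essentially the same route as the paper: the paper proves this "in the same manner as Proposition~\ref{copofiber}," i.e.\ by applying Proposition~\ref{CW-detection} to the CW cover of $|Q|$ by realizations of maximal chains, using that each intersection $|m_1\cap\cdots\cap m_k|$ is a simplex (hence contractible or empty) so the restricted maps are $(n-k+2)$--connected, and then invoking the identification of essential chains with finite intersections of maximal chains for the contractible case. Your explicit verification of the preimage identity and the covering property via Zorn's Lemma just fills in details the paper leaves implicit.
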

 
\begin{example}\label{joinex} There is a natural way to build examples of poset maps for which the inverse image of every essential chain is highly connected. Let $P$ be a poset and let $Q_p$, $p\in P$ be a collection posets indexed by $P$. Define a new poset
\[P^Q := \{(p,q) \in P\cross \bigcup_{p\in P} Q_p :\, q\in Q_p\}\]
with order relation 
 $(p,q)\leqs (p',q')$ if and only if either
\begin{itemize} \item $p' = p$ and $q\leqs q'\in Q_p$, or
\item $p < p'$.
\end{itemize}
Note that the natural projection $\pi \co P^Q \to P$, $\pi(p,q) = p$, is order-preserving, and the inclusions $Q_p \injects P^Q$, $q\goesto (p,q)$, are order-isomorphisms onto their images. If $\sigma\subseteq P$ is a chain, then $\pi^{-1} (\sigma) = \{(p,q) \in P^Q :\, p\in \sigma\}$ is the join of the posets $Q_p$, $p\in \sigma$. Letting $c(Q_p)$ denote the connectivity of $Q_p$ $($that is, the maximum $n$ such that $Q_p$ is $n$--connected, or $\infty$ if $Q_p$ is contractible$)$, it follows from~\cite[Proposition 1.9]{Quillen} and~\cite[Lemma 2.3]{Milnor-univ-bundles-2} that the connectivity of $\pi^{-1} (\sigma)$ is $($at least$)$ $2 \dim (\sigma) + \sum_{p\in \sigma} c(Q_p)$ $($note that since we have defined the empty space to be $(-2)$--connected, this formula holds even if some of the posets $Q_p$ are empty$)$.

As a simple example, let $P = \{0, 0', 1, 2\}$ with the usual numerical ordering, except that $0$ and $0'$ are incomparable.
Both maximal chains in $P$ contain $1$, so if $Q_1$ is contractible then $\pi^{-1} (\sigma) \heq *$ for each essential chain 
$\sigma\in P$. Note, however, that if the other $Q_i$ are not contractible, then $\pi$ does not satisfy the hypotheses of the ordinary Poset Fiber Theorems, as $\pi^{-1} (P_{\geqs 2}) = Q_2$ and $\pi^{-1} (P_{\leqs 0}) = Q_0$ $($and similarly for $0'$$)$.
\end{example}

\begin{example}\label{covex} The construction in 
Example~\ref{joinex}  gives examples of covers in which the connectivity of intersections decreases as in the hypotheses of Theorem~\ref{nerve-thm}. Starting with a poset $P$, we can set $Q_p = \mathcal{S}^0$ for each $p\in P$, where $ \mathcal{S}^0$ is the 2-element poset whose elements are incomparable $($so that $|\mathcal{S}^0| = S^0$$)$. 
Let $\V$ be the cover of $P^Q$ by the inverse images of maximal chains in $P$ under the projection $\pi\co P^Q \to P$.
Then each intersection of elements from $\V$ has the form $W = \pi^{-1} (p_1 < \cdots < p_k)$ for some
 chain $p_1 < \cdots < p_k$ in $P$. It follows that $W$ is a join of $k$ copies of $\mathcal{S}^0$,
so $|W| \homeo S^{k-1}$ and has connectivity $k-2$.
\end{example}

\section{Nerves of simplicial covers}\label{simpsec}

The Nerve Theorem is often applied to covers of \e{simplicial} complexes by subcomplexes. Since every simplicial complex has a natural CW structure with cells corresponding to simplices, Theorem~\ref{nerve-thm} applies to such covers. The nerve theorems in Bj\"orner's work~\cite{Bjorner-top-methods, Bjorner-nerves-fibers} are proven (for locally finite simplicial covers) by working with a particularly simple map from the original complex to the nerve, and in this section we analyze Bj\"orner's map using the results of the previous section. 

Given a locally finite cover $\C$ of a simplicial complex $Z$ by subcomplexes, Bj\"orner defines a map
\[\eta\co Z\to \mcN (\C)^\op,\] 
by $\eta(\sigma) = \{K\in \C\co \sigma\in K\}:= \C_\sigma$. As observed in~\cite[Proof of Theorem 10.6]{Bjorner-top-methods}, the Quillen fiber of $\eta$ over $\F\in \mcN (\C)$ is exactly $\bigcap \F$, leading to a proof that $\eta$ is an equivalence under the hypotheses of the ordinary Nerve Theorem. More generally, in~\cite{Bjorner-nerves-fibers}, the method of contractible carriers is used to show that $\eta$ is $(n+1)$--connected so long as $\bigcap \F$ is $(n-|\F|+1)$--connected for each $\F\in \mcN(\C)$. We will give a new proof of this fact, and show that with a small modification, the hypothesis of local finiteness can be removed. We will work with unindexed simplicial covers -- that is, subsets $\C \subseteq 2^Z$ such that each $C\in \C$ is a subcomplex of $Z$ and $\bigcup \C = Z$.

\begin{definition} Given a cover $\C$ of a simplicial complex $Z$ by subcomplexes, the \e{infinitary nerve} of $\C$ is the poset 
\[ \wt{\mcN} (\C) := \left\{ \mathcal{S}\subseteq \C\co \bigcap \mathcal{S}\neq \emptyset\right\},\]
and we define 
\[\eta\co Z\to \wt{\mcN} (\C)^\op,\] 
by $\eta(\sigma) = \C_\sigma$.
\end{definition}

\begin{lemma}\label{inf-lemma}
For every simplicial cover $\C$ of a simplicial complex $Z$, the inclusion ${\mcN}(\C) \injects \wt{\mcN} (\C)$ is a homotopy equivalence.
\end{lemma}
\begin{proof} The fiber of the inclusion under $\mcS$ is the (contractible) poset of all finite subsets of $\mcS$.
\end{proof}

\begin{proposition}\label{eta} Let $\C$ be a cover of the simplicial complex $Z$ by subcomplexes. If, for each $\F\in \mcN (\C)$, the intersection $\bigcap \F$ is $(n - |\F| + 1)$--connected, then the natural  zig-zag 
\[Z\srm{\eta} \wt{\mcN} (\C)^\op \stackrel{\heq}{\hooklongleftarrow} \mcN(\C)^\op\] 
is $(n+1)$--connected.  
\end{proposition}
\begin{proof} We wish to apply Theorem~\ref{copofiber}.
For any subset $Y \subseteq \wt{\mcN} (\C)$ that is bounded above,  $\bigcup Y$ is the join of $Y$, so we find that $\wt{\mcN} (\C)$ is coherent. 
It will now suffice to observe that, just as in the locally finite case, the Quillen fiber of $\eta$ above $S \in \wt{\mcN}$ is exactly $\bigcap S$. Indeed,
\begin{align*}\eta^{-1} \left(\wt{\mcN} (\C)_{\geqs S}\right) & = \{ \sigma\in Z \co S \subseteq \{K\in \C\co \sigma\in K\}\} =  \{ \sigma\in Z \co \sigma\in \bigcap S\} = \bigcap S.
\end{align*}
\end{proof}

The map $\eta$ can be defined more generally to take values in the \e{infinitary multinerve} 
\[ \tilde{\hat{\mcN}} (\C) := \left\{ (\mathcal{S}, C)\co \mathcal{S}\subseteq \C, C\in \pi_0 \left(\bigcap \mathcal{S}\right)\right\},\]
with ordering $(\mcS, C) \leqs (\mcS', C')$ if and only if $\mcS \subseteq \mcS'$ and $C' \subseteq C$. (For notational simplicity we will view path components of simplicial complexes as sets of simplices.) Specifically,
\[\eta (\sigma) = (\C_\sigma, [\sigma]),\] 
where $[\sigma]$ is  path component of $\C_\sigma$ containing $\sigma$.
An argument similar to the proof of Lemma~\ref{inf-lemma} shows that the inclusion $\hat{\mcN} (\C) \injects \tilde{\hat{\mcN}} (\C)$ is a homotopy equivalence.

\begin{question}\label{eta-q}  
If $Z$ and $\C$ satisfy the hypotheses of Theorem~$\ref{nerve-thm}$, must $\eta \co Z\to \tilde{\hat{\mcN}}(\C)$ be $(n+1)$--connected? More specifically, do $\eta$ and the zig-zag  (\ref{zz}) induce the same map on homotopy groups?
\end{question}

A similar computation to that in the proof of Proposition~\ref{eta} shows that the Quillen fibers of $\eta$ over $(\mathcal{S}, C)\in \tilde{\hat{\mcN}} (\C)$ is exactly $C$.
The poset $\tilde{\hat{\mcN}} (\C)$ need not be coherent, however, so we cannot apply Theorem~\ref{copofiber}. On the other hand, Bj\"orner's $n$--connected version of the Poset Fiber Theorem~\cite{Bjorner-nerves-fibers} does now show that if \e{all} of these components are $n$--connected, then $\eta$ is $(n+1)$--connected.

\begin{remark}\label{simple} When $Z$ and $\C$ are finite, the above reasoning shows the fibers of $\eta \co Z\to \hat{\mcN} (\C)$ are all components of intersections from $\C$, so when these components are all contractible, $\eta$ is a homotopy equivalence. In fact, Barmak's version of the Poset Fiber Theorem~\cite{Barmak} shows that $\eta$ is a \e{simple} homotopy equivalence, giving another proof of~\cite[Corollary 3.10]{FM}.
\end{remark}

\def\cprime{$'$}

 \end{document}